\numberwithin{equation}{section}
\def\titlerunning#1{\gdef\titrun{#1}}
\def\author#1{\gdef\autrun{\def\and{\unskip, }#1}\gdef\@author{#1}}
\def\address#1{{\def\and{\\\hspace*{18pt}}\renewcommand{\thefootnote}{}%
		\footnote {#1}}%
	\markboth{\autrun}{\titrun}}
\def\email#1{e-mail: #1}
\def\subjclass#1{{\renewcommand{\thefootnote}{}%
		\footnote{\emph{Mathematics Subject Classification (2010):} #1}}}
\def\keywords#1{\par\medskip
	\noindent\textbf{Keywords.} #1}
\theoremstyle{plain}
\newtheorem{Thm}{Theorem}[section]
\newtheorem{Lem}[Thm]{Lemma}
\newtheorem{Cor}[Thm]{Corollary}
\newtheorem{Prop}[Thm]{Proposition}
\newtheorem*{Thm*}{Theorem}
\newtheorem*{claim*}{Claim}
\newtheorem*{Cor*}{Corollary}
\newtheorem{Ques}{Question}
\newtheorem*{Ques*}{Question}
\newtheorem*{Prob*}{Problem}
\newtheorem*{OProb*}{Open Problem}
\theoremstyle{definition}
\newtheorem{Def}[Thm]{Definition}
\newtheorem*{Def*}{Definition}
\newtheorem{Rem}[Thm]{Remark}
\DeclareMathOperator{\tr}{tr}
\DeclareMathOperator{\real}{Re}
\DeclareMathOperator{\cat}{Cat}
\DeclareMathOperator{\vol}{vol}
\DeclareMathOperator{\supp}{supp}
\DeclareMathOperator{\tdeg}{deg}
\DeclareMathOperator{\diag}{diag}
\DeclareMathOperator{\Vol}{Vol}
\DeclareMathOperator{\Spin}{Spin}
\DeclareMathOperator{\SO}{SO}
\DeclareMathOperator{\Aut}{Aut}
\DeclareMathOperator{\diam}{diam}
\newcommand{\equ}{equation}
\newcommand{\C}{\mathbb{C}}
\newcommand{\N}{\mathbb{N}}
\newcommand{\R}{\mathbb{R}}
\newcommand\eps{\varepsilon}
\newcommand\ce{\mathcal{E}}
\newcommand\cg{\mathcal{G}}
\newcommand\ch{\mathcal{H}}
\newcommand\ci{\mathcal{I}}
\newcommand\cj{\mathcal{J}}
\newcommand\cm{\mathcal{M}}
\newcommand\cn{\mathcal{N}}
\newcommand\cw{\mathcal{W}}
\newcommand{\inp}[2]{\left\langle#1,#2\right\rangle}
\newcommand{\normm}[1]{{\left\vert\kern-0.25ex\left\vert\kern-0.25ex\left\vert #1 
		\right\vert\kern-0.25ex\right\vert\kern-0.25ex\right\vert}}
\def\mbs{\mathbb{S}}
\def\msd{\mathscr{D}}
\def\msf{\mathscr{F}}
\def\msh{\mathscr{H}}
\def\mfm{\mathfrak{M}}
\def\id{\text{Id}}
\def\ig{\textit{g}}
\def\ih{\textit{h}}
\def\ov{\overline}
\def\pa {\partial}
\def\De{\Delta}
\def\al{\alpha}
\def\bt{\beta}
\def\de{\delta}
\def\Ga{\Gamma}
\def\ga{\gamma}
\def\lm{\lambda}
\def\om{\omega}
\def\Om{\Omega}
\def\sa{\sigma}
\def\vr{\varepsilon}
\def\va{\varphi}
\begin{document}
	
	\titlerunning{Non-compactness results for the spinorial Yamabe-type problems}
	
	\title{Non-compactness results for the spinorial Yamabe-type problems with non-smooth geometric data}
	
	\author{Takeshi Isobe \quad Yannick Sire \quad  Tian Xu}%\footnote{Supported by the National Science Foundation of China (NSFC 11601370) and the Alexander von Humboldt Foundation of Germany}
	
	\date{}
	
	\maketitle
	
	\address{T. Isobe: Graduate School of Economics, Hitotsubashi University, 2-1 Naka, Kunitachi, Tokyo 186-8601, Japan;
		\email{t.isobe@r.hit-u.ac.jp}
		\and
		Y. Sire: Department of Mathematics, Johns Hopkins University, 3400 N. Charles Street, Baltimore, Maryland 21218;
		Y.S. is partially supported by NSF DMS Grant $2154219$, " Regularity {\sl vs} singularity formation in elliptic and parabolic equations". \email{ysire1@jhu.edu}. 
		\and
		T. Xu: Department of Mathematics, Zhejiang Normal University, Jinhua, Zhejiang, 321004, China;
		T.X. is partially supported by the National Science Foundation of China (NSFC 11601370) and the Alexander von Humboldt Foundation of Germany.
		 \email{xutian@amss.ac.cn} 
	}
	
	\subjclass{Primary 53C27; Secondary 35R01}
	
	\begin{abstract}
		
	Let $(M,\ig,\sa)$ be an $m$-dimensional closed spin manifold, with a fixed Riemannian metric $\ig$ and a fixed spin structure $\sa$; let $\mbs(M)$ be the spinor bundle over $M$. The spinorial Yamabe-type problems address the solvability of the following equation
	\[
	D_\ig\psi=f(x)|\psi|_\ig^{\frac2{m-1}}\psi, \quad \psi:M\to\mbs(M), \ x\in M
	\]
	where $D_\ig$ is the associated Dirac operator and $f:M\to\R$ is a given function. The study of such nonlinear equation is motivated by its important applications in Spin Geometry: when $m=2$, a solution corresponds to a conformal isometric immersion of the universal covering $\widetilde M$ into  $\R^3$ with prescribed mean curvature $f$; meanwhile, for general dimensions and $f\equiv constant\neq0$, a solution provides an upper bound estimate for the B\"ar-Hijazi-Lott invariant.
		
	The aim of this paper is to establish non-compactness results related to the spinorial Yamabe-type problems. Precisely, concrete analysis is made for two specific models on the manifold $(S^m,\ig)$ where the solution set of the spinorial Yamabe-type problem is not compact: $1).$ the geometric potential $f$ is constant (say $f\equiv1$) with the background metric $\ig$ being a $C^k$ perturbation of the canonical round metric $\ig_{S^m}$, which is not conformally flat somewhere on $S^m$; $2).$ $f$ is a perturbation from constant and is of class $C^2$, while the background metric $\ig\equiv\ig_{S^m}$.

		\vspace{.5cm}
		\keywords{Dirac operator; Spinorial Yamabe problem;  Blow-up phenomenon.}
	\end{abstract}
	
	\tableofcontents
	
	\section{Introduction}

    On a closed Riemannian $m$-manifold $(M, \ig)$ with $m\geq3$, the scalar curvature problem (or simply known as the Yamabe-type problem) is given by the differential equation
    \begin{\equ}\label{Yamabe}
    	-\frac{4(m-1)}{m-2}\De_{\ig}u + R_\ig u = f(x) u^{\frac{m+2}{m-2}}, \quad u>0,
    \end{\equ}
    where $\De_\ig$ is the Laplace operator with respect to $\ig$ and $R_\ig$ stands for the scalar curvature of $\ig$. Here, the problem is to decide which function $f$ on $M$ can be the scalar curvature of a conformal metric $\tilde\ig=u^{4/(m-2)}\ig\in[\ig]$. In case $f\equiv constant$, this problem is referred to as the classical Yamabe problem, and is completely solved by a series of works of Yamabe \cite{Yamabe}, Trudinger \cite{Trudinger}, Aubin \cite{Aubin} and Schoen \cite{Schoen}. See also the survey paper \cite{LeeParker} by Lee \& Parker.
    
    In the setting of spin geometry there exists a conformally covariant operator, the Dirac operator, which enjoys analogous properties to the conformal Laplacian. This operator was formally introduced by M.F. Atiyah in 1962 in connection with his elaboration of the index theory of elliptic operators.

    Let $(M,\ig,\sa)$ be an $m$-dimensional closed spin manifold, $m\geq2$, with a fixed Riemannian metric $\ig$ and a fixed spin structure $\sa:P_{\Spin}(M)\to P_{\SO}(M)$. The Dirac operator $D_\ig$ is defined in terms of a representation $\rho:\Spin(m)\to\Aut(\mbs_m)$ of the spin group which is compatible with Clifford multiplication. Let $\mbs(M):=P_{\Spin}(M)\times_\rho\mbs_m$ be the associated bundle, which we call the spinor bundle over $M$, with $\dim_\C\mbs(M)=2^{[\frac m2]}$. Then the Dirac operator $D_\ig$ is a first order differential operator acting on smooth sections of $\mbs(M)$, i.e. $D_\ig: C^\infty(M,\mbs(M))\to C^\infty(M,\mbs(M))$. We are concerned with the spinorial Yamabe-type problem
    \begin{\equ}\label{SY}
    D_\ig\psi=f(x)|\psi|_{\ig}^{\frac2{m-1}}\psi, \quad \psi: M\to\mbs(M)
    \end{\equ}
where $|\cdot|_\ig$ is the hermitian metric on $\mbs(M)$ induced from $\ig$. This equation appears in the study of different problems from conformal geometry, and has attracted much attention recently, see for instance \cite{Ammann, Ammann2003, Ammann2009, AGHM, AHM, Ba-Xu-JFA21, BF20, BMW21, CJW, Isobe11, Isboe-Xu21, Raulot, SX2020, SX2021} and references therein. 

We point out here that there are at least two motivations for studying Eq. \eqref{SY}.
One of them is that, when $f\equiv constant\neq0$ (say $f\equiv 1$), Eq. \eqref{SY} is closely related to the study of a conformal spectral invariant, i.e., the {\it B\"ar-Hijazi-Lott} invariant (see \cite[Section 8.5]{Ginoux} for an overview)
    \[
    \lm_{min}^+(M,\ig,\sa):=\inf_{\tilde\ig\in[\ig]}\lm_1^+(\tilde\ig)\Vol(M,\tilde\ig)^{\frac1m},
    \]
where $\lm_1^+(\tilde\ig)$ stands for the smallest (i.e. first) positive eigenvalue of $D_{\tilde\ig}$ with respect to $\tilde\ig\in[\ig]$. In fact, as was pointed out in \cite{Ammann, Ammann2009, AGHM}, the value of the B\"ar-Hijazi-Lott invariant for an arbitrary closed spin $m$-manifold can not be larger than that for the round sphere (with the same dimension), that is
    \begin{\equ}\label{BHL-inequ}
    \lm_{min}^+(M,\ig,\sa)\leq\lm_{min}^+(S^m,\ig_{S^m},\sa_{S^m})=\frac m2 \om_m^{\frac1m}
\end{\equ}
where $\ig_{S^m}$ is the standard round metric, $\sa_{S^m}$ stands for the unique spin structure on $S^m$ and $\om_m$ denotes the volume of $(S^m,\ig_{S^m})$. In this regard, the next stage would consist in showing that \eqref{BHL-inequ} is a strict inequality when $(M,\ig)$ is not conformally equivalent to $(S^m,\ig_{S^m})$. And it is important to notice that, if there exists a nontrivial solution to Eq. \eqref{SY} (with $f\equiv 1$) such that $\int_M|\psi|_\ig^{2m/(m-1)}d\vol_{\ig}<(\frac m2)^m\om_m$, then the strict inequality in \eqref{BHL-inequ} holds true (see \cite{SX2020, SX2021}). This can be viewed as the spinorial analogue of the Yamabe problem in geometric analysis. However, the strict inequality in \eqref{BHL-inequ} is only verified for some special cases (for instance, if $M$ is locally conformally flat, if $D_\ig$ is invertible and if the so-called {\it Mass endomorphism} is not identically zero \cite{AHM}, and all rectangular tori \cite{SX2020}, and non-locally conformally flat manifolds \cite{SX2021}), but a general result is still lacking (cf. \cite{ADHH, Ginoux, Hermann10}). The methods that can be used are sometimes similar to the ones of the Yamabe problem, but since we work with Dirac operator and spinors, the reasoning is more involved as the eigenvalues of the Dirac operator tend to both $-\infty$ and $+\infty$ and there is no adequate replacement for the maximum principle. 

Another reason that makes Eq. \eqref{SY} interesting is that, in dimension $m=2$, its solution provides a strong tool for showing the existence of prescribed mean curvature surfaces in $\R^3$ (here the function $f$ plays the role of the mean curvature). Special cases of such surfaces are constant mean curvature (CMC) surfaces (that is $f\equiv constant$) which have been studied before by completely different techniques, see for instance \cite{GB98}. The correspondence between a solution of Eq. \eqref{SY} on a Riemannian surface $M$ and a periodic conformal immersion (possibly with branching points) of the universal covering $\widetilde M$ into $\R^3$ with mean curvature $f$ is known as the spinorial Weierstra\ss\ representation. For details in this direction, we refer to \cite{Ammann, Ammann2009, Kenmotsu, Konopelchenko, KS96, Fredrich98, Matsutani, Taimanov97, Taimanov98, Taimanov99} and references therein.

Although the existence problem for Eq. \eqref{SY} is not settled in full generality, there are several partial existence results in the literature, see for instance \cite{Isobe13, Isobe15, Xu-CAG}, and it is often true that many solutions exist for Eq. \eqref{SY}. As a first step towards multiple existence results, consider the problem on the Torus $S^1(L)\times S^1(1)$ with product metric, there are many non-minimizing solutions if $L$ is large, see \cite{SX2020} (and also see \cite{Isboe-Xu21} for more examples in the non-locally conformally flat setting). In this paper, we address a very fundamental question 
\begin{Ques}
	Let $M$ be a closed oriented spin $m$-manifold, equipped with the data  $(\ig,f)$ on $M$ (a metric and a real function), so that either $(M,\ig)$ is not conformally equivalent to $(S^m,\ig_{S^m})$ or $f:M\to\R$ is not a constant. Whether or not the set of all solutions to the spinorial Yamabe-type PDE \eqref{SY} is compact (in the $C^1$-topology, say)?
\end{Ques}
\noindent
The case of the round sphere $(S^m,\ig_{S^m})$ and $f\equiv constant\neq0$ is exceptional since \eqref{SY} is invariant under the action of the conformal group on $S^m$, which is not compact. Let us also mention here that, in the context of the spinorial Weierstra\ss\ representation, the above question may lead us to think 
\begin{Ques}\label{Q2}
	Given a connected closed oriented surface $\Sigma$ and arbitrary data $(\ig,f)$ on $\Sigma$, is it possible to characterize a non-compact family of immersions $\Pi_i:\Sigma\to\R^3$ conformally realizing $(\ig,f)$, that is
	\[
	\Pi_i^*(\ig_{\R^3})\in[\ig] \quad \text{and} \quad H_{\Pi_i}=f, \quad \text{for all } i=1,2,\dots
	\]
	where $H_{\Pi}$ stands for the mean curvature of an immersion $\Pi$?
\end{Ques}
\begin{Rem}
Usually, a generic immersion is uniquely determined up to a rigid motion by its first fundamental form and its mean curvature function, but there are some exceptions, for instance most constant mean curvature immersions. A classical result by Bonnet states that if there exists a diffeomorphism $\Psi:\Sigma_1\to\Sigma_2$ between two closed immersed surfaces $\Sigma_1$, $\Sigma_2$ of genus zero in $\R^3$ such that $\Psi$ preserves both the metric and the mean curvature function of the surfaces, then $\Sigma_1$ and $\Sigma_2$ are congruent in $\R^3$ (i.e., they differ by a rigid motion). Note that in Question \ref{Q2} we are not assuming that the immersed surfaces are isometric, which is a critical hypothesis of Bonnet's result.
\end{Rem}

Generally speaking, the answer to Question \ref{Q2} is definitely no. On the one hand, for any immersion $\Pi$ of a compact surface in $\R^3$, one must have $H_\Pi>0$ somewhere. This means that the function $f$ cannot be arbitrarily taken. And in fact there are further obstructions, at least on the sphere $\Sigma=S^2$. Indeed, the mean curvature $H_\Pi$ of a conformal immersion $\Pi: S^2\to\R^3$ must satisfy
\[
\int_{S^2}V(H_\Pi)d\vol_{\Pi^*(\ig_{\R^3})}=0
\] 
for any conformal vector field $V$ on $S^2$, see \cite{AHA}. In particular, if $x_3:S^2\to\R$ stands for the third component of the standard inclusion of $S^2$ in $\R^3$, then for any $\vr\neq0$ the function $f(x)=1+\vr x_3$ cannot be realized as the mean curvature of a conformal immersion $S^2\to\R^3$. On the other hand, it is well-known that the round sphere is the only possible shape of an immersed closed CMC surface in $\R^3$ having genus $0$ (see Hopf \cite{Hopf}). Therefore, the questions which concern us here are only interesting in the case where the solution set of Eq. \eqref{SY} is non-empty and having rich characterization to reflect the geometric interpretations.

Let us point out that a similar non-compactness question has been raised for the classical Yamabe problem, which is well-known as the Compactness Conjecture, see \cite{Schoen1991}. And such conjecture has been verified up to dimension 24 and disproved for dimensions $m\geq25$, see \cite{Brendle, BrMa, KMS}. So far, to the best of our knowledge, there is no result characterizing the compactness or non-compactness of the solution set for Eq. \eqref{SY}. One of the reasons is that, since the validity of the strict inequality in \eqref{BHL-inequ} is still open, the solvability of Eq. \eqref{SY} is far from complete. Moreover, it is also not even clear if the positive mass theorem (or its variants) can be employed to the study of Eq. \eqref{SY} as the Schoen-Yau positive energy theorem does for the classical Yamabe problem. %\todo{can we say a little more in this paragraph?}
    
In this paper, we intend to construct specific geometric data on a Riemannian spin manifold such that the set of solutions to Eq. \eqref{SY} fails to be compact. %Note that Eq. \eqref{SY} is invariant under the canonical action of $S^1=\{z\in\C:|z|=1\}$ on spinors (that is the multiplication of unit complex number $\psi\mapsto e^{i\theta}\psi$, $\theta\in[0,2\pi]$). Moreover, for the cases $m\equiv 2,3,4$ (mod 8), the spinor bundle has a quaternionic structure which commutes with Clifford multiplication~\cite[Section 1.7]{Friedrich00} or \cite[Page 33, Table III]{LM}. Hence in these cases, Eq. \eqref{SY} is invariant under the canonical action of the unit quaternions $S^3=\{q\in \mathbb{H}:|q|=1\}$. Thus we are interested in non-compact characterization of $S^1$ or $S^3$-inequivalent solutions.\todo{maybe we don't need these group action since these are all compact groups}
To be more precise, we will focus on the case $M=S^m$ and attack the problem from two perspectives. For starters, let us take $f\equiv 1$ in \eqref{SY} and consider the effects of the background metric $\ig$. In this case, we are facing with the equation
    \begin{\equ}\label{SY-Sm-0}
    	D_\ig\psi=|\psi|_\ig^{\frac2{m-1}}\psi \quad \text{on } (S^m,\ig)
    \end{\equ}
where $\ig$ is not conformally related to the round metric. It is of particular interest since the integral $\int_{S^m}|\psi|_\ig^{\frac{2m}{m-1}}d\vol_\ig$ of a solution gives an upper bound of the B\"ar-Hijazi-Lott invariant. Hence, it would be interesting if one can derive a conformal spectral estimate for the Dirac operator $D_\ig$ so that \eqref{BHL-inequ} is a strict inequality. Another perspective is to fix  $\ig=\ig_{S^m}$ (that is the canonical round metric) and to consider the problem with a non-constant function $f:S^m\to\R$, so that the effects of the external potential function can be detected. This leads us to consider the equation
\begin{\equ}\label{SY-H-m-sphere}
	D_{\ig_{S^m}}\psi=f(x)|\psi|_{\ig_{S^m}}^{\frac2{m-1}}\psi \quad \text{on } (S^m,\ig_{S^m})
\end{\equ}
where $f\not\equiv constant$. In this setting, when $m=2$, it is of geometric interest to show the existence of a non-compact collection of immersed spheres in $\R^3$ with a prescribed mean curvature function $f$.

Our first main result reads as
    
    \begin{Thm}\label{main thm0}
    For $k\geq1$ and $m\geq 4k+2$. There exists a Riemannian metric $\ig$ on $S^m$ of class $C^k$ and a sequence of spinors $\{\psi_i\}_{i=1}^\infty\subset C^1(S^m,\mbs(S^m))$ with the following properties:
    \begin{itemize}
    	\item[$(1)$] $\ig$ is not locally conformally flat;
    	
    	\item[$(2)$] $\psi_i$ is a nontrivial solution of the equation \eqref{SY-Sm-0}	for all $i\in\N$;
    	
    	\item[$(3)$] $\displaystyle \int_{S^m}|\psi_i|_\ig^{\frac{2m}{m-1}}d\vol_{\ig}<\big(\frac m2\big)^m\om_m$ for all $i\in\N$, and 
    	\[
    	\lim_{i\to\infty}\int_{S^m}|\psi_i|_\ig^{\frac{2m}{m-1}}d\vol_{\ig}=\big(\frac m2\big)^m\om_m;
    	\]
    	
    	\item[$(4)$] $\sup_{S^m}|\psi_i|_\ig\to+\infty$ as $i\to\infty$.
    \end{itemize}
Moreover, the strict inequality in \eqref{BHL-inequ} holds true for $\ig$, i.e.
\[
\lm_{min}^+(S^m,\ig,\sa)<\frac m2 \om_m^{\frac1m}.
\]
   \end{Thm}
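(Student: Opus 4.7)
The strategy is to build $\ig$ and the blow-up sequence $\{\psi_i\}$ via a finite-dimensional Lyapunov--Schmidt reduction based at the noncompact family of ``bubble'' solutions of Eq.~\eqref{SY-Sm-0} on the round sphere. More precisely, on $(S^m,\ig_{S^m})$ the solution set contains a smooth noncompact manifold $\mathcal{Z}$ of ground states $\psi_{p,\lambda}$, parameterized by $(p,\lambda)\in S^m\times(0,\infty)$ modulo a compact internal symmetry, each of which saturates $\int_{S^m}|\psi_{p,\lambda}|_{\ig_{S^m}}^{2m/(m-1)}d\vol_{\ig_{S^m}}=(m/2)^m\om_m$; they are the orbits of a fixed constant spinor under the conformal group and, pulled back by stereographic projection, are the explicit Euclidean bubbles. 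The non-degeneracy of $\mathcal{Z}$, meaning that the kernel of the linearized Dirac equation at $\psi_{p,\lambda}$ coincides with $T_{\psi_{p,\lambda}}\mathcal{Z}$ modulo internal symmetries, is what makes a reduction possible.

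Fix $p_0\in S^m$ and a small parameter $\delta>0$. In normal coordinates around $p_0$, prescribe a compactly supported symmetric 2-tensor $h(x)=\rho(|x|/\delta)H(x)$, where $\rho$ is a smooth cutoff and $H$ is a $C^k$ tensor that vanishes to order $k$ at $p_0$, chosen so that the metric $\ig:=\ig_{S^m}+h$ fails to be conformally flat on every neighborhood of $p_0$. By construction $\ig$ is of class $C^k$, agrees with $\ig_{S^m}$ outside $B_{2\delta}(p_0)$, and can be made arbitrarily close to $\ig_{S^m}$ in the $C^k$-norm by shrinking $\delta$. The Euler--Lagrange functional
\[
F_\ig(\psi)=\tfrac{1}{2}\int_{S^m}\langle D_\ig\psi,\psi\rangle_\ig\,d\vol_\ig-\tfrac{m-1}{2m}\int_{S^m}|\psi|_\ig^{2m/(m-1)}\,d\vol_\ig
\]
is strongly indefinite because the spectrum of $D_\ig$ is unbounded in both directions, so the standard reduction scheme has to be adapted through the spectral splitting of $D_\ig$. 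Writing $\psi=\psi_{p,\lambda}+\phi$ with $\phi$ orthogonal to $T_{\psi_{p,\lambda}}\mathcal{Z}$ in an appropriate fractional Sobolev inner product, the orthogonal equation is solved for $\phi=\phi(p,\lambda,\ig)$ by a contraction argument based on the uniform invertibility of the linearization transverse to $T\mathcal{Z}$, which follows from the non-degeneracy of the bubbles together with the fact that $\ig$ is a $C^k$-small perturbation of $\ig_{S^m}$.

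The heart of the argument is then the asymptotic expansion of the reduced functional $\Phi(p,\lambda):=F_\ig(\psi_{p,\lambda}+\phi(p,\lambda,\ig))$ as $\lambda\to\infty$. Using the explicit form of $\psi_{p,\lambda}$ under stereographic projection and estimating carefully the contribution of $h$ to the quadratic and nonlinear parts of $F_\ig$, one would obtain
\[
\Phi(p,\lambda)=c_0-C\,\lambda^{-\alpha}\,\Theta_\ig(p)+o(\lambda^{-\alpha})
\]
uniformly for $p$ near $p_0$, with $c_0=\frac{1}{2m}(m/2)^m\om_m$, a positive constant $C$, an exponent $\alpha=\alpha(m,k)>0$, and a nonnegative function $\Theta_\ig$ measuring the local obstruction to conformal flatness of $\ig$ at $p$ (essentially $|W_\ig(p)|_\ig^2$ when $k\geq 2$); the sign of the leading term is arranged through the choice of $H$. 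The dimension condition $m\geq 4k+2$ enters precisely here, guaranteeing that this correction dominates all the remainder terms produced by the merely $C^k$ regularity of $\ig$ and by the exchange between round and perturbed metrics. Since $\Theta_\ig(p_0)>0$ by construction, $p_0$ is a stable local extremum of $\Theta_\ig$, and the implicit function theorem yields, for every sufficiently large $\lambda_i$, a critical point $(p_i,\lambda_i)$ of $\Phi$; the associated $\psi_i:=\psi_{p_i,\lambda_i}+\phi(p_i,\lambda_i,\ig)$ is then a genuine solution of Eq.~\eqref{SY-Sm-0}. The identity $F_\ig(\psi_i)=\frac{1}{2m}\int_{S^m}|\psi_i|_\ig^{2m/(m-1)}d\vol_\ig$ together with the expansion gives the strict inequality and the limit asserted in (3), while the concentration of $\psi_{p_i,\lambda_i}$ at $p_i$ yields (4); the strict B\"ar--Hijazi--Lott inequality then follows from (3) and the criterion recalled in the introduction.

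The main obstacle is carrying out this reduction and the expansion in the spinorial setting at only $C^k$ regularity. Because $F_\ig$ is strongly indefinite and the Dirac operator admits no maximum principle, the invertibility of the linearization transverse to $T\mathcal{Z}$ and all the remainder estimates must be obtained purely spectrally, working in the fractional Sobolev space associated with $|D_\ig|^{1/2}$. Moreover, since $\ig$ is only $C^k$, the Taylor expansions of the curvature quantities and the associated integrations by parts have to be handled delicately, and the bookkeeping of error orders in $\lambda^{-1}$ versus $\delta$ is exactly what forces $m\geq 4k+2$. A secondary technical point is the concrete construction of the tensor $H$ so that $\Theta_\ig(p_0)>0$ with the correct sign to produce a maximum of $\Phi$ near $p_0$; this is a finite-dimensional linear-algebra computation that must be carried out compatibly with the asymptotic expansion above.
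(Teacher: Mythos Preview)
Your overall strategy---Lyapunov--Schmidt reduction around the bubble manifold and analysis of the reduced functional---is the paper's strategy too, but the mechanism by which \emph{infinitely many} solutions arise is different, and your version has a genuine gap.

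You perturb the round metric near a \emph{single} point $p_0$ and then try to find, for each large $\lambda_i$, a critical point $(p_i,\lambda_i)$ of the reduced functional $\Phi$. But your own expansion $\Phi(p,\lambda)=c_0-C\lambda^{-\alpha}\Theta_\ig(p)+o(\lambda^{-\alpha})$ is monotone in $\lambda$ for large $\lambda$: its $\lambda$-derivative is $C\alpha\lambda^{-\alpha-1}\Theta_\ig(p)+o(\lambda^{-\alpha-1})$, which does not vanish near any $p$ with $\Theta_\ig(p)>0$. A stable extremum in $p$ alone does not give a critical point in $(p,\lambda)$. So a single localized perturbation produces at most finitely many critical points of $\Phi$, not a sequence with $\lambda_i\to\infty$; the sentence ``the implicit function theorem yields, for every sufficiently large $\lambda_i$, a critical point $(p_i,\lambda_i)$'' is where the argument breaks.

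The paper resolves this by perturbing with an \emph{infinite series} of compactly supported bumps: after stereographic projection to $\R^m$, the metric is $\ig_{\R^m}+\varepsilon\sum_{i}a_i\, h(\cdot-x_i)$ with $|x_i|\to\infty$. On the critical manifold the first-order correction $\Gamma$ vanishes identically, so one must pass to the second-order reduced functional $\hat\Phi$; for a suitably chosen ``elementary'' matrix $h$, each individual bump creates a single strict local minimum of $\hat\Phi$ in a bounded region $U_{x_i}$ near $x_i$, and the interaction between distant bumps is shown to be small enough that these minima persist for the full series. The resulting solutions are centered near $x_i\to\infty$ in $\R^m$, which on $S^m$ means concentration at the pole $P$, giving (4); the energy expansion $(\tfrac{m}{2})^m\omega_m+C_{i,m}a_i^2\varepsilon^2+o(a_i^2\varepsilon^2)$ with $C_{i,m}<0$ gives (3). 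The dimension bound $m\ge 4k+2$ arises from balancing three inequalities on the parameters $a_i=i^{-\beta}$, $x_i=i^{\alpha}r x_0$: the decay must be fast enough that the pulled-back metric is $C^k$ at $P$ ($\beta>2k\alpha$); the spacing must make interaction terms dominated by the $\varepsilon^2 a_i^2$ main term ($(\alpha-1)(m-1)\ge 2\beta$); and $m-1>2k$. This is a very different accounting from the one you outline, where the bound is attributed to Taylor remainders from a single $C^k$ perturbation.
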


\begin{Rem}
\begin{itemize}
	\item[(1)] Let us point out here that, following from our construction, the metric $\ig$ in Theorem \ref{main thm0} cannot be smooth. But the above result indicates that higher the dimension is better the regularity of the metric $\ig$ will be. 
	
	\item[(2)] Theorem \ref{main thm0} provides an example of non-smooth metric $\ig$ on a spin manifold such that the strict inequality in \eqref{BHL-inequ} holds. This is the first result of this kind in the study of B\"ar-Hijazi-Lott invariant.
	
	\item[(3)] Comparing Theorem \ref{main thm0} with the results of the Yamabe problem, it would be natural to expect a compactness result for Eq. \eqref{SY-Sm-0} in low dimensions. And it is also interesting to see if the non-compactness results hold for some $C^\infty$ smooth background metric. 
\end{itemize}
\end{Rem}

\medskip

Our next result is concerned with the external potential function $f$.

\begin{Thm}\label{main thm1}
	For every $m\geq2$, there exists a non-constant function $f\in C^2(S^m)$, $f>0$, and a sequence of spinors $\{\psi_i\}_{i=1}^\infty\subset C^1(S^m,\mbs(S^m))$ with the following properties:
	\begin{itemize}
		\item[$(1)$] $\psi_i$ is a nontrivial solution of the equation \eqref{SY-H-m-sphere} for all $i\in\N$;
		
		\item[$(2)$] $|\psi_i|_{\ig_{S^m}}>0$ on $S^m$ and there holds
		\[
		\lim_{i\to\infty}\int_{S^m}f(x)|\psi_i|_{\ig_{S^m}}^{\frac{2m}{m-1}}d\vol_{\ig_{S^m}}=\Big(\frac m2\Big)^m\om_m
		\]
		and
		\[
		\lim_{i\to\infty}\int_{S^m}f(x)^2|\psi_i|_{\ig_{S^m}}^{\frac{2m}{m-1}}d\vol_{\ig_{S^m}}=\Big(\frac m2\Big)^m\om_m;
		\]
		
		\item[$(3)$] $\sup_{S^m}|\psi_i|_{\ig_{S^m}}\to+\infty$ as $i\to\infty$.
	\end{itemize}
\end{Thm}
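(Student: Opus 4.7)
The plan is to construct the blow-up sequence by a Lyapunov--Schmidt reduction around the manifold of bubble solutions to the unperturbed equation on $(S^m,\ig_{S^m})$, in the spirit of the Ambrosetti--Malchiodi-type perturbation method for scalar-curvature prescription on $S^m$, adapted to the Dirac setting via the conformal covariance of $D_{\ig_{S^m}}$. Throughout I would write $f(x) = 1 + \eps h(x)$ for a suitable $h \in C^2(S^m)$ and a small parameter $\eps > 0$, so that \eqref{SY-H-m-sphere} becomes a small perturbation of the $f\equiv 1$ problem, whose solutions are understood explicitly.

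First I would recall the family of bubbles. Under stereographic projection $S^m\setminus\{\text{pt}\}\to\R^m$, the equation $D_{\ig_{S^m}}\psi=|\psi|^{2/(m-1)}\psi$ pulls back to the Euclidean equation $D\psi=|\psi|^{2/(m-1)}\psi$, whose ground states are explicit and form, by conformal invariance, a non-compact smooth critical manifold $Z$ parameterized by a concentration point $x_0\in S^m$, a scale $\lambda\in(0,\infty)$, and a constant spinor direction $\xi_0\in\mbs_m$; each bubble $\psi_{x_0,\lambda,\xi_0}$ realizes $\int_{S^m}|\psi_{x_0,\lambda,\xi_0}|_{\ig_{S^m}}^{2m/(m-1)}d\vol_{\ig_{S^m}}=(m/2)^m\om_m$. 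Using the known non-degeneracy of bubbles (i.e.\ the kernel of the linearized $f\equiv1$ operator at $\psi_{x_0,\lambda,\xi_0}$ equals $T_{\psi_{x_0,\lambda,\xi_0}}Z$), I would invert the linearization of the perturbed equation on the orthogonal complement of $T_{\psi_{x_0,\lambda,\xi_0}}Z$ and produce a unique small correction $\phi=\phi(x_0,\lambda,\xi_0)$ with $\|\phi\|=O(\eps)$ uniformly in the bubble parameters.

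Plugging the ansatz $\psi_{x_0,\lambda,\xi_0}+\phi$ into the natural energy functional associated with \eqref{SY-H-m-sphere} then produces a finite-dimensional reduced functional $\tilde J(x_0,\lambda,\xi_0)$, whose critical points are in bijection with honest solutions of \eqref{SY-H-m-sphere}. Because $|\psi_{x_0,\lambda,\xi_0}|^{2m/(m-1)}d\vol_{\ig_{S^m}}$ concentrates at $x_0$ as $\lambda\to\infty$, an asymptotic expansion of $\tilde J$ in the scale parameter gives a leading dependence on $f(x_0)$ and a next-order dependence on derivatives of $f$ at $x_0$, schematically
\[
\tilde J(x_0,\lambda,\xi_0) = c_0 - c_1\eps(f(x_0)-1) + c_2\eps\lambda^{-2}\Delta f(x_0) + o(\eps\lambda^{-2}).
\]
Selecting $h$ with a non-degenerate critical point $x^*$ such that $h(x^*)=0$ (hence $f(x^*)=1$) and $\Delta h(x^*)$ of the sign that makes $\tilde J$ strictly monotone in $\lambda$ for large $\lambda$ near $x^*$, the reduced problem admits a sequence of critical points with $\lambda_i\to\infty$. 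The corresponding $\psi_i=\psi_{x_0^{(i)},\lambda_i,\xi_0^{(i)}}+\phi_i$ then concentrate at $x^*$: combined with $f(x^*)=1$ this yields both limit identities in (2) (since $|\psi_i|^{2m/(m-1)}d\vol_{\ig_{S^m}}$ converges to a Dirac mass of total mass $(m/2)^m\om_m$ at a point where $f$ and $f^2$ both equal $1$), while $\lambda_i\to\infty$ gives the blow-up in (3).

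The crucial obstacle is the asymptotic expansion of $\tilde J$: one must compute enough terms to identify the coefficient of $\Delta f(x^*)$ (or of another relevant derivative of $f$, should the quadratic term degenerate) with the correct sign and magnitude, which demands delicate control of the correction $\phi$ and careful handling of the spinor-direction parameter $\xi_0$. The absence of a maximum principle for the Dirac operator makes positivity and sign arguments subtle; moreover, since $\dim_\C\mbs_m=2^{[m/2]}$ grows with $m$, the extra spinor degrees of freedom in $Z$ must either be eliminated by an equivariance argument or shown not to obstruct the finite-dimensional reduction, and the $C^2$ regularity of $f$ (rather than $C^\infty$) requires the expansion to be robust under only two derivatives of $f$.
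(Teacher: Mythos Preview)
Your framework (stereographic projection, Lyapunov--Schmidt around the bubble manifold, reduced functional) matches the paper, but the mechanism you propose for producing \emph{infinitely many} blowing-up solutions has a genuine gap.

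You take a single perturbation $h$ with one non-degenerate critical point $x^*$ and argue that making $\tilde J$ strictly monotone in $\lambda$ for large $\lambda$ yields a sequence of critical points with $\lambda_i\to\infty$. This is backwards: strict monotonicity of $\tilde J$ in $\lambda$ \emph{excludes} critical points in the $\lambda$-direction, it does not create them. With your expansion $\tilde J\approx c_0-c_1\eps h(x_0)+c_2\eps\lambda^{-2}\Delta h(x_0)$, the $\lambda$-derivative is $-2c_2\eps\lambda^{-3}\Delta h(x_0)$, which is nonzero for every large $\lambda$ once $\Delta h(x^*)\neq 0$. A single smooth $h$ therefore gives, via the standard Ambrosetti--Malchiodi or Chang--Yang index count, only \emph{finitely} many critical points of the reduced functional, each with bounded concentration parameter; no blow-up sequence arises this way.

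The paper's construction is conceptually different. It takes $f=1+\eps\tilde H$ where $\tilde H$ is an \emph{infinite superposition} $\tilde H=\sum_{i\ge 1} a_i\, H\big(\pi_{p_0}^{-1}(\pi_{p_0}(\cdot)-z_i)\big)$ of translated copies of a fixed Morse function $H$ satisfying the index conditions (H-1)--(H-2), with centers $z_i=i^{\alpha}z_0\to\infty$ in $\R^m$ and weights $a_i=i^{-\beta}$. Each individual bump gives, by a topological degree argument on the reduced functional (Proposition~\ref{prop:topology-degree} and the abstract degenerate result Theorem~\ref{abstract result2}, which also handles the spinor-direction degeneracy in $\gamma$), one solution $\varphi_i$ with parameters $(\lambda_i,\xi_i)$ lying in a \emph{bounded} window $\Omega_H(z_i)$ near $z_i$. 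The blow-up $\|\psi_i\|_{L^\infty(S^m)}\to\infty$ comes not from $\lambda_i\to\infty$ but from the \emph{centers} $\xi_i\sim z_i\to\infty$ in $\R^m$, which after pullback to $S^m$ forces concentration at the pole $p_0$. The remaining work is to show (i) that the interaction between distant bumps is small enough that the degree argument for each bump survives for the full $\tilde H$ (Lemma~\ref{Lem:sum2}, Proposition~\ref{Prop:diff-derivative}), and (ii) that the infinite sum $\tilde H$ is globally $C^2$ on $S^m$, including at $p_0$, which forces the constraint $\beta>4\alpha+1$ (Appendix~\ref{C2-smooth}). The limits in part~(2) then follow because $f(p_0)=1$. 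In short, the idea you are missing is: many spatially separated bumps, one solution per bump, blow-up via centers escaping to the pole rather than via the scale parameter.
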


\begin{Rem}
\begin{itemize}
	\item[(1)] Theorem \ref{main thm1} (2) has its own geometric meaning. In fact, in dimension $m=2$, we can introduce a conformal metric $\ig_i=|\psi_i|_{\ig_{S^2}}^4\ig_{S^2}$ on $S^2$, for each $i$. Then, due to the conformal covariance of the Dirac operator (cf. \cite{Friedrich00, Ginoux}), we see that there is a spinor field $\va_i$ on $(S^2,\ig_i)$ such that
	\[
	D_{\ig_i}\va_i=f(x)\va_i \quad \text{and} \quad |\va_i|_{\ig_i}\equiv1.
	\]
	Hence, by the spinorial Weierstra\ss\ representation, there is an isometric immersion $\Pi_i:(S^2,\ig_i)\to(\R^3,\ig_{\R^3})$ with mean curvature $H_{\Pi_i}=f$. Furthermore, since the pull-back of the Euclidean volume form under this immersion is $\Pi_i^*(d\vol_{\ig_{\R^3}})=|\psi_i|_{\ig_{S^2}}^4d\vol_{\ig_{\R^2}}$, the associated Willmore energy $W(\Pi_i)$ for this immersion satisfies
	\[
	W(\Pi_i)=\int_{S^2}f(x)^2|\psi_i|_{\ig_{S^2}}^4d\vol_{\ig_{\R^2}}<8\pi
	\]
	for all $i$ (large enough). Due to Li-Yau's inequality \cite[Theorem 6]{LY82}, the immersion $\Pi_i$ covers points in $\R^3$ at most once. Hence $\Pi_i$ is actually an embedding.
	
	\item[(2)] Theorem \ref{main thm1} provides a positive answer to Question \ref{Q2}. %In fact, by the blow-up analysis in \cite{Xu-CAG}, the non-compactness of $\{\psi_i\}$ can only occur around one point on $S^2$ in the sense that: up to a subsequence if necessary, there exists  $p_*\in S^2$ such that 
%	\[
%	|\psi_i(p_*)|_{\ig_{S^2}}\to+\infty \quad \text{and} \quad \max_{S^2\setminus U}|\psi_i|_{\ig_{S^2}}\to0 \quad \text{as } i\to\infty
%	\]
%	where $U\subset S^2$ is any neighborhood of $p_*$. 
Indeed, let us consider the family of immersions 
	\[
	\ci=\big\{\Pi:S^2\to\R^3:\, \Pi \text{ conformally realizes } (\ig_{S^2},f)\big\}
	\]
	and discuss its compactness (say, whether the images of $S^2$ via elements of $\ci$ form a compact collection of surfaces in $\R^3$), we find that $\{\Pi_i\}\subset \ci$ and $W(\Pi_i)\to4\pi$ as $i\to\infty$. Notice that an immersion $\Pi:\Sigma\to\R^3$ of a Riemann surface $\Sigma$ satisfies $W(\Pi)=4\pi$ if and only if $\Pi(\Sigma)$ is the round sphere. Hence, we see that $\ci$ cannot be compact since the limit of $\Pi_i(S^2)$ (even if it exists) will not realize the non-constant function $f$ as the mean curvature.
\end{itemize}
\end{Rem}

Now as an immediate consequence of the above remark, we have 
\begin{Cor}
There exists a non-constant function $f\in C^2(S^2)$ such that the family 
\[
\ce=\big\{ \Pi:S^2\to\R^3 \text{ is an embedding}:\, \Pi \text{ conformally realizes } (\ig_{S^2},f) \big\}
\]
fails to be compact in the sense that $\{\Pi(S^2):\, \Pi\in\ce\}$ is not compact in $\R^3$.
\end{Cor}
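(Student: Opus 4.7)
The plan is to derive this corollary directly from Theorem~\ref{main thm1} combined with the spinorial Weierstra\ss\ construction recorded in part~(1) of the preceding remark, and to close the argument with an appeal to the Willmore rigidity of the round sphere.

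First I would apply Theorem~\ref{main thm1} in dimension $m=2$ to obtain a non-constant $f\in C^2(S^2)$ with $f>0$ and a sequence of nowhere-vanishing spinors $\psi_i\in C^1(S^2,\mbs(S^2))$ solving Eq.~\eqref{SY-H-m-sphere} for which
\[
\int_{S^2} f(x)|\psi_i|_{\ig_{S^2}}^4\, d\vol_{\ig_{S^2}}\longrightarrow 4\pi,\qquad \int_{S^2} f(x)^2|\psi_i|_{\ig_{S^2}}^4\, d\vol_{\ig_{S^2}}\longrightarrow 4\pi,
\]
and $\sup_{S^2}|\psi_i|_{\ig_{S^2}}\to\infty$. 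Following part~(1) of the remark, I would then set $\ig_i=|\psi_i|_{\ig_{S^2}}^4\ig_{S^2}$, use the conformal covariance of the Dirac operator to produce a unit-norm spinor $\va_i$ on $(S^2,\ig_i)$ satisfying $D_{\ig_i}\va_i=f\va_i$, and invoke the spinorial Weierstra\ss\ representation to obtain an isometric immersion $\Pi_i:(S^2,\ig_i)\to(\R^3,\ig_{\R^3})$ with $H_{\Pi_i}=f$ and Willmore energy $W(\Pi_i)=\int_{S^2}f^2|\psi_i|_{\ig_{S^2}}^4\, d\vol_{\ig_{S^2}}$. For $i$ large enough $W(\Pi_i)<8\pi$, so the Li--Yau inequality \cite{LY82} forces $\Pi_i$ to be an embedding, and hence $\Pi_i\in\ce$ for all sufficiently large $i$.

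The heart of the matter is then to rule out that $\{\Pi_i(S^2)\}$ subconverges to an element of $\{\Pi(S^2):\Pi\in\ce\}$. I would argue by contradiction: if this family of subsets were compact in the natural Hausdorff sense, a subsequence would satisfy $\Pi_i(S^2)\to\Pi_\infty(S^2)$ for some $\Pi_\infty\in\ce$. Lower semicontinuity of the Willmore functional under a suitable notion of convergence (varifold or Hausdorff, with uniform area bounds furnished by $f$ being bounded away from $0$) combined with the Willmore inequality would give
\[
4\pi\leq W(\Pi_\infty)\leq\liminf_{i\to\infty}W(\Pi_i)=4\pi,
\]
so that $W(\Pi_\infty)=4\pi$. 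By the classical rigidity of the Willmore energy on $S^2$, this forces $\Pi_\infty(S^2)$ to be a round sphere, whose mean curvature is necessarily constant. This contradicts $H_{\Pi_\infty}=f$ and the non-constancy of $f$, completing the argument.

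The main technical point I anticipate is the Willmore passage-to-the-limit: because $\sup|\psi_i|_{\ig_{S^2}}\to\infty$, the conformal factors $|\psi_i|^4$ defining $\ig_i$ degenerate and the maps $\Pi_i$ themselves may exhibit concentration or blow-up. Crucially, however, the argument does not require the $\Pi_i$ to converge as parametrized immersions, only that the \emph{images} form a Hausdorff-precompact family; once such a limit set is extracted, Willmore rigidity applied to this limit alone suffices. Modulo this standard piece of geometric-measure-theoretic machinery, the corollary is an essentially immediate consequence of Theorem~\ref{main thm1}.
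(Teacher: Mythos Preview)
Your proposal is correct and follows essentially the same approach as the paper: the Corollary is stated as ``an immediate consequence of the above remark,'' and that remark already contains exactly the argument you outline—construct the embeddings $\Pi_i\in\ce$ via the spinorial Weierstra\ss\ representation and Li--Yau, observe $W(\Pi_i)\to4\pi$, and use the Willmore rigidity of the round sphere to conclude that any limit would have constant mean curvature, contradicting the non-constancy of $f$. Your version is slightly more explicit about the lower-semicontinuity step in passing to the limit, but the strategy and key ingredients are identical.
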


Let us sketch the main steps involved in the proofs of the Theorems \ref{main thm0} and \ref{main thm1}. In Section \ref{sec Preliminaries}, after introducing some basic concepts and notations from the spin geometry, we will reformulate our problems and work on $\R^m$ instead of $S^m$ via stereographic projection. Our goal is to construct solutions to the spinorial Yamabe-type PDEs \eqref{SY-Sm-0} and \eqref{SY-H-m-sphere} on $(\R^m,\tilde\ig)$ and $(\R^m,\ig_{\R^m})$ respectively, where either $\tilde\ig=\ig_{\R^m}+\vr\tilde\ih$ is a perturbation of the Euclidean metric or $f(x)=1+\vr \tilde H(x)$ is a perturbation from constant. In Section \ref{sec:abstract results}, we set up a perturbative variational framework so that we can reduce the our problems to a kind of finite dimensional bifurcation problem. This idea has been employed for the study of classical Yamabe problem (see, e.g., \cite{AM, AM99, BM, CY91}). Here, unlike the scalar cases, the finite dimensional problem associated to the spinorial Yamabe-type PDEs is degenerate, that is, any critical point of the main term of the reduced functional is not isolated, and the collection of these critical points appear as critical manifolds of positive dimension. Thus, it is not clear whether critical points of the reduced
functional create true solutions of the original problems. For this reason, the abstract framework in \cite{AM, BM} can not be
implemented in a straightforward manner, and somehow a delicate handling is required. In the subsequent sections, i.e., Sections \ref{sec:blow-up} and \ref{sec:blow-up2}, we check the hypothesis of the abstract framework in the two cases of our main problems mentioned above, and complete the proofs of the main results. The Appendix contains some technical computations.

	\section{Preliminaries}\label{sec Preliminaries}
	
	\subsection{Projecting the problems to $\R^m$}\label{subsec:conformal}
	
	Let us first consider Eq. \eqref{SY-Sm-0} and rewrite it in a more precise manner as
	\begin{\equ}\label{SY-m-sphere}
		D_{\ig_\vr}\psi=|\psi|_{\ig_\vr}^{\frac2{m-1}}\psi \quad \text{on } S^m,
	\end{\equ}
	in which the metric $\ig_\vr$ is a perturbation from the canonical one on $S^m$. Using the stereographic projection $\pi_P:S^m\setminus\{P\}\to\R^m$  (for an arbitrarily fixed $P\in S^m$), we obtain the following one-to-one correspondence between $\ig_\vr$ on $S^m\setminus\{P\}$ and a metric $\tilde\ig_\vr$ on $\R^m$:
	\begin{\equ}\label{the metric g}
		\tilde\ig_\vr=\mu^{-2}\cdot(\pi_P^{-1})^*\ig_\vr, \quad \mu(x)=\frac2{1+|x|^2}, \ x\in\R^m.
	\end{\equ}
	Clearly, if $\tilde\ig_\vr=\ig_{\R^m}$ is the canonical Euclidean metric, then the metric $\ig_\vr$ on $S^m\setminus\{P\}$ can be extended globally to the standard round metric. In what follows, we assume that $\tilde\ig_\vr$ takes the form $\tilde\ig_\vr=\ig_{\R^m}+\eps \tilde\ih$ where $\tilde\ih$ is a smooth symmetric bilinear form on $\R^m$. In particular, let us consider a specific situation
	\begin{\equ}\label{metric form}
		\tilde\ig_\vr(x)=\diag\big(\tilde\ig_{11}(x),\dots,\tilde\ig_{mm}(x) \big)
		\quad \text{with} \quad 
		\tilde\ig_{ii}(x)=1+\eps\tilde\ih_{ii}(x), 
	\end{\equ}
	where $\tilde\ih_{ii}:\R^m\to\R$, $i=1,\dots,m$, are smooth functions.  Let us point out here that, for a general choice of $\tilde\ih$, the pull-back metric $\ig_\vr$ on $S^m\setminus\{P\}$ may be discontinuous at the point $P$. Hence, in order to extend $\ig_\vr$ globally on $S^m$, it is natural to require the entries $\tilde\ih_{ii}$, $i=1,\dots,m$, and their derivatives behave ``nicely" at infinity. 
	
	We also mention that the Eq. \eqref{SY-m-sphere} on $S^m$ is equivalent to an equation on $\R^m$ by conformal equivalence. More precisely, the equation $D_\ig\psi=|\psi|_\ig^{\frac2{m-1}}\psi$ on a spin manifold $(M,\ig)$ is invariant under conformal changes of the metric. In fact, let $\bar\ig=e^{2u}\ig$ for some function $u$ on $M$, there is an isomorphism of vector bundles $F:\mbs(M,\ig)\to\mbs(M,\bar\ig)$
	(here $\mbs(M,\ig)$ and $\mbs(M,\bar\ig)$ are spinor bundles on $M$ with respect to the metrics $\ig$ and $\bar\ig$, respectively) which is a fiberwise isometry such that
	\[
	D_{\bar\ig}\big( F(\va)\big)= F\big( e^{-\frac{m+1}2u}D_\ig(e^{\frac{m-1}2u}\va) \big)
	\]
	for $\va\in C^\infty((M,\ig),\mbs(M,\ig))$ (for more detailed definitions and facts about Clifford algebras, spin structures on manifolds and Dirac operators, please consult \cite{Friedrich00, LM}). Thus, when $\psi$ is a solution to the equation $D_\ig\psi=|\psi|_\ig^{\frac2{m-1}}\psi$  on $(M, \ig)$, then $\va:= F(e^{-\frac{m-1}2u}\psi) $ satisfies the same equation on $(M, \bar\ig)$: $D_{\bar\ig}\va=|\va|_{\bar\ig}^{\frac2{m-1}}\va$ on $(M, \bar\ig)$.
	
	\medskip
	
	Applying the above observation to Eq. \eqref{SY-m-sphere} with $M=S^m$ and using \eqref{the metric g}-\eqref{metric form}, we find that if $\psi\in C^1(S^m,\mbs(S^m))$ is a solution then $\va=\mu^{\frac{m-1}2}F(\psi\circ\pi_{P}^{-1})$ satisfies the equation
	\begin{\equ}\label{reduced Dirac problem 1}
		D_{\tilde\ig_\vr}\va=|\va|_{\tilde\ig_\vr}^{\frac2{m-1}}\va \quad \text{on } \R^m.
	\end{\equ}
    Conversely, by the regularity theorem and the removal of singularities theorem for Dirac equations on spin manifolds (see \cite[Appendix]{Isobe11} and \cite[Theorem 5.1]{Ammann2009}), if $\va$ is a solution  to Eq. \eqref{reduced Dirac problem 1} and $\va\in L^{\frac{2m}{m-1}}(\R^m,\mbs(\R^m))$ then it corresponds to a global $C^1$-solution $\psi$ to Eq. \eqref{SY-Sm-0} on $S^m$. Therefore, the study of Eq. \eqref{SY-m-sphere} is equivalent to the study of Eq. \eqref{reduced Dirac problem 1}.

	\medskip
	
	Now, to characterize the metric $\ig_\vr$, let us set
	\begin{\equ}\label{h-expansion}
		\tilde\ih(x)=\sum_{i=1}^{+\infty}a_i \ih(x-x_i) 
	\end{\equ} 
	where $a_i\in\R$, $|x_i|\to+\infty$ as $i\to+\infty$, and $h$ is a smooth symmetric matrix function with compact support. Roughly speaking, with this choice of $\tilde\ih$ in the definition of metric $\tilde\ig_\vr$, the Dirac operator in \eqref{reduced Dirac problem 1} becomes 
	\begin{\equ}\label{Dirac-operator-expansion}
	D_{\tilde\ig_\vr}=D_{\ig_{\R^m}}+R(\vr,x,\tilde\ih,\nabla)
	\end{\equ}
	where $R(\vr,x,\tilde\ih,\nabla)$ is a suitable perturbation term. In this way, we expect that Eq. \eqref{reduced Dirac problem 1} shall be handled by means of a perturbation method in nonlinear analysis. When $\tilde\ih$ consists of only a finite number of terms, the existence problem of Eq. \eqref{reduced Dirac problem 1} has been firstly treated in \cite{Isboe-Xu21}. In particular, a very specific construction of the matrix function $\ih$ has been introduced in \cite{Isboe-Xu21} so that the effect of the perturbation term in \eqref{Dirac-operator-expansion} can be explicitly computed from a variational point of view. Here, for the sake of completeness, we present the very formulation for the matrix $\ih$ in \eqref{h-expansion} as follows.
	
	\begin{Def}\label{def k-elementary}
		\it Given a smooth $m\times m$ diagonal matrix function
		\[
		\ih(x)=\diag\big(\ih_{11}(x),\dots, \ih_{mm}(x) \big) \quad \text{for } x\in \R^m,\ m\geq2,
		\]
		and a point $\xi=(\xi_1,\dots,\xi_m)\in\R^m$. For $k\in\{1,\dots,m\}$ and $p\in[1,\infty)$, we say that $\ih$ is  $(k,p)$-elementary at $\xi$, if 	$\xi\not\in\supp \ih_{kk}$ and, for $x=(x_1,\dots,x_m)\in\R^m$ close to $\xi$ and $i\neq k$, 
		\[
		\ih_{ii}(x)=\ih_{ii}(\xi)+c_i(x_i-\xi_i)+c_k(x_k-\xi_k)+o(|x-\xi|^p)
		\]
		where $c_i\in\R$, $i=1,\dots,m$, are constants with particularly $c_k\neq0$. Moreover, if the $o(|x-\xi|^p)$ term vanishes identically in the above local expansion of $\ih_{ii}$'s, then we say $\ih$ is $(k,\infty)$-elementary at $\xi$. In this way, we call  $p\in[1,\infty)\cup\{\infty\}$ the remainder exponent of $\ih$ at $\xi$.
	\end{Def}

	\begin{Rem}
	Let us present here a simple example of $(1,p)$-elementary matrix at the origin, in dimension $3$: 
	\[
	\ih(x)=\begin{pmatrix}
		\,0 & 0 & 0 \\
		\,0 & a+c_1x_1+c_2x_2 & 0 \\
		\,0 & 0 & b + c_1x_1 +c_3 x_3 \, 
	\end{pmatrix} + o(|x|^p)
	\]
	for $|x|<r$, where $a,\,b,\, c_1,\,c_2,\,c_3\in\R$ are real constants with particularly $c_1\neq0$. This very specific definition is first introduced in \cite{Isboe-Xu21} for the study of multiple solutions for the spinorial Yamabe-type problems. More examples and a brief explanation of such $(k,p)$-elementary matrices have been given in \cite[Appendix]{Isboe-Xu21}. We mention here that the main reason we introduce those $(k,p)$-elementary matrices lies in Proposition \ref{prop hat-Phi-ih}, where we find such matrices are surprisingly compatible with the perturbed Dirac operator \eqref{Dirac-operator-expansion} and they guarantee the implementation of our abstract result in Section \ref{sec:abstract results}. 
	\end{Rem}

	It can be seen from Definition \ref{def k-elementary} that ``{\it elementary}" matrix is a local concept. In the sequel, if it is clear from the context to which dimension we refer, we will simply use the name ``elementary matrix" to designate a member $\ih$ (without specifying its tag numbers $k$ and the location point $\xi$). In order to classify the perturbation term in \eqref{h-expansion}, let us set
	\[%\label{ch-p}
		\ch(p)=\left\{ 	\tilde\ih(\cdot)=\sum_{i=1}^{+\infty}a_i \ih(\cdot-x_i) \left|\,
		\aligned
		& \ih \text{ is a compactly supported elementary matrix}\\
		&\text{with remainder exponent }p, \\
		& \{a_i\}\subset\R \text{ and } \sum_{i=1}^\infty|a_i|^\tau<+\infty, \text{ for some } \tau>1, \\
		& \{x_i\}\subset\R^m \text{ and } |x_j-x_i|>4\diam(\supp\ih) \text{ for } i\neq j
		\endaligned
		\right.\right\}.
	\]
Then Theorem \ref{main thm0} is nothing but a direct consequence of the following result.

	\begin{Thm}\label{main thm}
		Let $p\in[2,\infty)\cup\{\infty\}$, $k\geq1$ and $m\geq4k+2$. There exist $\tilde\ih\in\ch(p)$ and $\vr_0>0$ such that for every $\vr\in(-\vr_0,\vr_0)\setminus\{0\}$ the metric $\ig_\vr$ in \eqref{the metric g}-\eqref{h-expansion} is of class $C^k$ on $S^m$, and the following properties hold:
		\begin{itemize}
			\item[$(1)$] $\|\ig_\vr-\ig_{S^m}\|_{C^k}\to0$ as $\vr\to0$,
			
			\item[$(2)$] Eq. \eqref{SY-m-sphere} possesses a sequence of solutions $\{\psi_\vr^{(i)}\}_{i=1}^{\infty}$ satisfying $\|\psi_\vr^{(i)}\|_{L^\infty(S^m)}\to+\infty$ as $i\to+\infty$,
			
			\item[$(3)$] there holds 
			\[
			\int_{S^m}|\psi_\vr^{(i)}|_{\ig_\vr}^{\frac{2m}{m-1}}d\vol_{\ig_\vr}=\big(\frac m2\big)^m\om_m+C_{i,m}a_i^2\vr^2+o(a_i^2\vr^2)
			\]
			where $C_{i,m}<0$ is a negative constant depending only on $\tilde\ih$, the dimension $m$ and $i\in\N$. 
		\end{itemize} 
	\end{Thm}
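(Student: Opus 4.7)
The plan is to apply the finite-dimensional reduction developed in Section \ref{sec:abstract results} to the equivalent problem \eqref{reduced Dirac problem 1} on $\R^m$, with bubbles concentrated near the sequence of translated support centers $\{x_i\}$. Each elementary block $a_i\ih(\cdot-x_i)$ is engineered to produce exactly one solution $\va_\vr^{(i)}$, and pulling it back through the stereographic projection of Subsection \ref{subsec:conformal} yields $\psi_\vr^{(i)}$ on $S^m$.

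Step one would be to verify item $(1)$. Writing $\ig_\vr=\mu^2\cdot\pi_P^*\tilde\ig_\vr$ with $\tilde\ig_\vr=\ig_{\R^m}+\vr\tilde\ih$, the separation condition $|x_i-x_j|>4\diam(\supp\ih)$ gives disjoint supports of the blocks in $\R^m$, so only the decay at infinity matters for the extension across $P$. Since $\mu(x)\sim|x|^{-2}$ with $\partial^\alpha\mu=O(|x|^{-2-|\alpha|})$ and each block lives in a ball of fixed radius around $x_i$, the $k$-th derivative of the pulled-back $i$-th block near $P$ is of order $|a_i||x_i|^{-(m-2k)}$; the dimensional bound $m\geq 4k+2$ together with $\sum|a_i|^\tau<\infty$ and the separation condition (which forces $|x_i|\to\infty$) is what makes the sum converge in $C^k(S^m)$ and gives $\|\ig_\vr-\ig_{S^m}\|_{C^k}\to 0$ as $\vr\to 0$.

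Step two is the reduction itself. For the unperturbed equation $D_{\ig_{\R^m}}\va=|\va|^{2/(m-1)}\va$ the ground-state solutions form a critical manifold $\cz$ parametrized by translation, dilation, and a spinor initial value; this manifold is degenerate beyond the directions generated by conformal symmetries. Using the abstract framework of Section \ref{sec:abstract results} applied to a bubble ansatz concentrated near $x_i$, one obtains a reduced functional $\Phi_\vr$ on (a neighborhood of) $\cz$ whose critical points correspond to solutions. By Proposition \ref{prop hat-Phi-ih}, the leading perturbative term $\hat\Phi_\ih$ admits a non-degenerate critical point thanks to the $(k,p)$-elementary structure of $\ih$; the distinguished nonzero coefficient $c_k$ in Definition \ref{def k-elementary} is what breaks the degeneracy in the relevant direction. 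For each $i$, this produces a critical point of $\Phi_\vr$ near $x_i$ at some concentration scale $\lambda_i^*$, and arranging $\lambda_i^*\to 0$ as $i\to\infty$ forces $\|\va_\vr^{(i)}\|_{L^\infty}\to\infty$, which after conformal pull-back gives property $(2)$.

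Finally, expanding $\cl_\vr$ at the reduced critical point gives item $(3)$: with $\Phi_\vr=C_0+a_i\vr\,\hat\Phi_\ih+o(a_i\vr)$ and $C_0=(m/2)^m\om_m$, the standard saddle-point expansion at a non-degenerate critical point of $\hat\Phi_\ih$ yields a quadratic correction $C_{i,m}a_i^2\vr^2$, whose sign I would read off from the Morse index of $\hat\Phi_\ih$ as computed in the cited proposition. The main obstacle, as the introduction warns, is precisely the degeneracy of $\cz$: in the classical Yamabe analogue \cite{AM, AM99, BM, CY91} the unperturbed critical manifold is generated purely by conformal symmetries and one may work modulo the group action, while here the spinor initial-value direction is an honest degenerate direction with no compensating symmetry. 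The role of the $(k,p)$-elementary matrices is precisely to produce a perturbation whose leading-order reduced functional is non-degenerate on $\cz$, bypassing a naive Morse--Bott hypothesis; verifying this compatibility, and in particular identifying the sign of $C_{i,m}$, is where I expect the bulk of the technical work to concentrate.
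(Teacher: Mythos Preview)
There are several genuine gaps. First, your expansion $\Phi_\vr = C_0 + a_i\vr\,\hat\Phi_\ih + o(a_i\vr)$ is off by one order: by Lemma \ref{first properties} one has $\Ga|_{\cm}\equiv 0$, so the first-order term vanishes identically on the critical manifold and the single-block reduced functional reads $\cj_\vr^{(i),red}(z) = \cj_0(z) + a_i^2\vr^2\,\hat\Phi^{\ih(\cdot-x_i)}(z) + o(a_i^2\vr^2)$. This is exactly the ``Case 1'' degenerate situation of Section \ref{sec:abstract results}, and the quadratic order is why the energy correction in item (3) carries the factor $a_i^2\vr^2$. Relatedly, Proposition \ref{prop hat-Phi-ih} does not produce a non-degenerate critical point of $\hat\Phi^\ih$; it shows $\hat\Phi^\ih$ attains a strictly negative interior minimum with boundary values tending to zero. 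It is this minimum structure (via Theorem \ref{abstract result3}), not a Morse/saddle-point argument, that yields the solution, and it gives $C_{i,m}<0$ directly rather than through a Morse index.

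Second, and more seriously, you treat the blocks as independent, but the interaction of $a_j\ih(\cdot-x_j)$, $j\neq i$, with the bubble near $x_i$ must be shown to be negligible compared to the main term $a_i^2\vr^2$. This is the content of Lemma \ref{Lem:sum} and Proposition \ref{prop compare}: on $\ov U_{x_i}$ one has $|\cj_\vr^{red}-\cj_\vr^{(i),red}|\leq C|\vr|\big(\sum_{j\neq i}|x_j-x_i|^{-(m-1)\tau/(\tau-1)}\big)^{(\tau-1)/\tau}$, and to force this below $\frac{\de}{4}\vr^2 a_i^2$ the separation must grow with $\vr^{-1}$. The paper therefore takes $x_j=j^\al r\,x_0$ with $r=C_0|\vr|^{-1/(m-1)}$; the three constraints $(\al-1)(m-1)\geq 2\bt$, $\bt>2k\al$, $2k<m-1$ (which together force $m\geq 4k+2$) come precisely from balancing this interaction bound against the $C^k$ regularity at $P$. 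Your regularity scaling is also wrong: under the inversion $y\mapsto y/|y|^2$ each derivative contributes a factor $|x_i|^{2}$, so the $C^k$ norm of the $i$-th block near $P$ is $O(|\vr a_i|\,|x_i|^{2k})$, not $|a_i||x_i|^{-(m-2k)}$; this is why $\bt>2k\al$ and $2k<m-1$ enter. Finally, the blow-up in (2) is not driven by $\lambda_i\to 0$ --- the concentration scales remain in the fixed compact set $\ov U$ --- but by the centers $\xi_i\in U_{x_i}$ drifting to infinity with $x_i$, so that on $S^m$ the solutions concentrate at $P$.
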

	
	\medskip
	
	Next, in order to study Eq. \eqref{SY-H-m-sphere}, let us focus on the case where the function $f$ takes the form $f(x)=1+\vr \tilde H(x)$, i.e.
	\begin{\equ}\label{SY-H-vr}
		D_{\ig_{S^m}}\psi=(1+\vr\tilde H(x))|\psi|_{\ig_{S^m}}^{\frac2{m-1}}\psi\quad \text{on } (S^m,\ig_{S^m})
	\end{\equ}
	with $\vr\neq0$ and
	some $\tilde H:S^m\to\R$ at least being H\"older continuous. As before, denote by $\pi_{p_0}:S^m\setminus\{p_0\}\to\R^m$ the stereographic projection from $p_0$ (this point will be fixed later according to our choice of $\tilde H$), we have $(\pi_{p_0}^{-1})^*\ig_{S^m}=\mu^2\ig_{\R^m}$. And then, via the conformal transformation, Eq. \eqref{SY-H-m-sphere} can be converted  to
	\begin{\equ}\label{reduced Dirac problem 2}
		D_{\ig_{\R^m}}\va=(1+\vr \tilde K(x))|\va|_{\ig_{\R^m}}^{\frac2{m-1}}\va \quad \text{on } \R^m
	\end{\equ}
where $\tilde K(x)=\tilde H(\pi_{p_0}^{-1}(x))$. We remark here that we shall write it simply $x$ for the argument of a function when no confusion can arise.

Similarly to the way we handle Eq. \eqref{reduced Dirac problem 1}, let us consider a situation where the function $\tilde H$ can be decomposed into a series of components such that each component generates a solution to Eq. \eqref{reduced Dirac problem 2}. In order to do so, for a continuously differentiable function $H$ on $S^m$ (which plays the role of an individual component of $\tilde H$), let us simply denote $Crit[H]$ the critical set of $H$. For later use, we assume the following two standing conditions on $H$:
\begin{itemize}
	\item[(H-1)] $H\in C^2(S^m)$ is a Morse function such that $\De_{\ig_{S^m}}H(p)\neq 0$ for $p\in Crit[H]$.
	\item[(H-2)] $H$ satisfies that
	\[
	\sum_{p\in Crit[H],\ \De_{\ig_{S^m}}H(p)<0} (-1)^{\mfm(H,p)}\neq (-1)^m
	\]
	where $\mfm(H,p)$ is the Morse index of $H$ at $p\in Crit[H]$.
\end{itemize}
Here we mention that condition $(\text{H-2})$ is the well-known index counting condition which was first introduced in the scalar curvature problem in \cite{BC91, CGY93}. 

Then we collect the following family of continuous functions on $S^m$
\[
	\msh=\left\{
	\tilde H=\sum_{i=1}^\infty a_i H\big(\pi_{p_0}^{-1}(\pi_{p_0}(\cdot)-z_i)\big)   \left| \,
	\aligned
	& H \text{ satisfies the conditions $\text{(H-1)}$ and $\text{(H-2)}$},   \\
	& p_0\in Crit[H] \text{ and } \De_{\ig_{S^m}} H(p_0)>0, \\
	& \{a_i\}\subset\R \text{ and } \sum_{i=1}^\infty|a_i|<+\infty, \\
	& \{z_i\}\subset\R^m \text{ and }
	|z_i-z_j|>1 \text{ for } i\neq j
	\endaligned \right.
	\right\}.
\]
It is clear that, when $H\in C^2(S^m)$, $H\circ\pi_{p_0}^{-1}$ defines a $C^2$-function on $\R^m$ and $\lim_{|y|\to\infty}H\circ\pi_{p_0}(y)=H(p_0)$. The function $H\big(\pi_{p_0}^{-1}(\pi_{p_0}(\cdot)-z_i)\big):S^m\setminus\{p_0\}\to\R$ can be viewed as a translation of $H$ on $S^m$ with $H(p_0)$ being fixed. Hence the above family describes a function that is (approximately) concentrated on the points $\pi_{p_0}^{-1}(z_i)\in S^m\setminus\{p_0\}$, $i=1,2,\dots$, and is well-defined on $S^m$. We remark that the elements in $\msh$ is not necessarily differentiable at $p_0$ and, as was indicated in its geometric background, the function $1+\vr \tilde H$ plays a role of mean curvature. Hence, one may expect $\tilde H$ to have certain regularity at the point $p_0$. With all these in mind, let us present the following result that explains Theorem \ref{main thm1}.

\begin{Thm}\label{main thm-H}
	For every $m\geq2$, there exist $\tilde H\in\msh\cap C^2(S^m)$ and $\vr_0>0$ such that for $\vr\in(-\vr_0,\vr_0)\setminus\{0\}$ the following properties hold:
	\begin{itemize}
		\item[$(1)$] Eq. \eqref{SY-H-vr} possesses a sequence of solutions $\{\psi_\vr^{(i)}\}_{i=1}^{\infty}$ satisfy $\|\psi_\vr^{(i)}\|_{L^\infty(S^m)}\to+\infty$ as $i\to+\infty$,
		
		\item[$(2)$] $|\psi_\vr^{(i)}|_{\ig_{S^m}}>0$ on $S^m$ provided that $|\vr|$ is small, moreover,
		\[
		\lim_{i\to\infty}\int_{S^m}(1+\vr\tilde H(x))|\psi_\vr^{(i)}|_{\ig_{S^m}}^{\frac{2m}{m-1}}d\vol_{\ig_{S^m}}=\Big(\frac m2\Big)^m\om_m
		\] 
		and
		\[
		\lim_{i\to\infty}\int_{S^m}(1+\vr\tilde H(x))^2|\psi_\vr^{(i)}|_{\ig_{S^m}}^{\frac{2m}{m-1}}d\vol_{\ig_{S^m}}=\Big(\frac m2\Big)^m\om_m.
		\] 
	\end{itemize}
\end{Thm}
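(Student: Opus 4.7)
The plan is to combine the conformal reduction of Section~\ref{sec Preliminaries} with the abstract Lyapunov--Schmidt framework promised in Section~\ref{sec:abstract results}, and then extract a sequence of concentrating solutions via a Morse-theoretic argument based on the index-counting condition (H-2). First, pulling Eq.~\eqref{SY-H-vr} back to $\R^m$ via stereographic projection from $p_0$ gives Eq.~\eqref{reduced Dirac problem 2} with $\tilde K = \tilde H \circ \pi_{p_0}^{-1}$, and on $\R^m$ the unperturbed equation $D_{\ig_{\R^m}}\va = |\va|^{2/(m-1)}\va$ possesses a finite-dimensional manifold $\cz$ of ground-state ``bubbles'', generated from one fixed bubble by translations, dilations, and constant rotations in the spinor fibre. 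These bubbles are the model profiles near which each $\psi_\vr^{(i)}$ will be constructed.

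The abstract reduction represents a candidate as $\va = z + w$, $z\in\cz$, solves an auxiliary equation $w = w_\vr(z) = O(\vr)$ in a complement of $T_z\cz$, and produces a reduced functional $\Phi_\vr$ on $\cz$. Parametrizing $\cz$ (modulo spin symmetry) by $(\xi,\lambda)\in\R^m\times(0,\infty)$ via translation-dilation of a fixed bubble, one expects an expansion
\[
\Phi_\vr(\xi,\lambda) = c_* + \vr\,\Gamma(\xi,\lambda) + o(\vr),
\]
where $\Gamma(\xi,\lambda)$ is the integral of $\tilde K$ weighted by the squared modulus of the $(\xi,\lambda)$-bubble. Because $\tilde H \in \msh$ is a well-separated superposition of translates of a single Morse function $H$ and $|z_i - z_j| > 1$, the function $\Gamma$ decouples near each $z_i$ into an independent copy of the prescribed-mean-curvature functional for $a_i H$; in the concentrated regime $\lambda \to \infty$ finding critical points near the $i$-th cluster reduces to finding critical points of $H$ on $S^m$ transplanted near $z_i$.

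For each sufficiently large $i$ the $i$-th bump is isolated by the separation hypothesis. The Morse condition (H-1) renders the localized critical points non-degenerate; the index-counting condition (H-2) together with $\De_{\ig_{S^m}} H(p_0) > 0$ (which prevents the concentration from escaping to the puncture $p_0$) then produces, via a Poincar\'e--Hopf / Leray--Schauder degree argument in the spirit of Bahri--Coron and Chang--Gursky--Yang, at least one genuine critical point of $\Phi_\vr$ associated with the $i$-th cluster, giving the solution $\psi_\vr^{(i)}$. Since $|z_i|\to\infty$, the concentration point on $S^m$ approaches $p_0$ while the conformal factor $\mu$ of \eqref{the metric g} magnifies pointwise norms on pull-back, forcing $\|\psi_\vr^{(i)}\|_{L^\infty(S^m)}\to\infty$, which is~(1). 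For~(2), the $H^{1/2}$-closeness of $\psi_\vr^{(i)}$ to a bubble of exact ground-state mass $(m/2)^m \om_m$, combined with the fact that on the effective support of $|\psi_\vr^{(i)}|^{2m/(m-1)}$ one has $1+\vr\tilde H \to 1+\vr\tilde H(p_0)$, yields both limits once $H$ and $\{a_i\}$ are arranged (inside the freedom defining $\msh \cap C^2(S^m)$) so that $\tilde H(p_0)=0$. The pointwise non-vanishing $|\psi_\vr^{(i)}|_{\ig_{S^m}} > 0$ then follows from uniform $C^0$-closeness to a bubble whose pull-back to $S^m$ is nowhere zero.

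The principal obstacle, explicitly flagged in the introduction, is the degeneracy of the ground-state manifold $\cz$: the spinor rotation symmetry (and the complex structure of the Dirac kernel) produces positive-dimensional critical orbits for the leading term of $\Phi_\vr$, so the Ambrosetti--Badiale--Malchiodi scheme does not apply verbatim. Making the Morse-theoretic production of critical points survive this degeneracy---by quotienting out the symmetry, or by directly computing topological invariants on the orbit space while controlling the remainder $w_\vr(z)$ uniformly across the critical manifold---is the delicate technical core of the proof, and is what the dedicated Section~\ref{sec:blow-up2} must carry out.
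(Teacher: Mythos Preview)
Your outline captures the architecture of the paper's argument (conformal reduction to $\R^m$, Lyapunov--Schmidt reduction on the bubble manifold, degree argument near each $z_i$, and the degenerate abstract framework of Section~\ref{sec:abstract results}), but there is a genuine gap that the paper's proof has to work to close and that your sketch glosses over: the \emph{uniformity in $i$} of the threshold $\vr_0$.

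You argue that the separation hypothesis $|z_i-z_j|>1$ in the definition of $\msh$ makes $\tilde\Gamma$ ``decouple near each $z_i$'' into a copy of the one-bump functional $\Gamma^{(i)}$. That is not enough. The degree argument near $z_i$ requires a quantitative bound on $\|\nabla\tilde\Gamma(z)-\nabla\Gamma^{(i)}(z)\|$ for $z=\psi_{\lambda,\xi,\gamma}$ with $(\lambda,\xi)$ in a fixed bounded window around $z_i$, and this bound must be small \emph{uniformly in $i$} so that a single $\vr_0$ works for all $i$ simultaneously. The paper obtains this (Lemma~\ref{Lem:sum2} and Proposition~\ref{Prop:diff-derivative}) only after (a) choosing $H$ with $H(p_0)=\nabla H(p_0)=0$ so that $K=H\circ\pi_{p_0}^{-1}$ decays like $(1+|x|^2)^{-1}$ (Lemma~\ref{K-inequ-2}), and (b) taking $z_i=i^\alpha z_0$ with $|z_0|=R\gg1$ and $a_i=i^{-\beta}$, so that the interaction sum $\sum_{j\neq i}|a_j|\,|z_j-z_i|^{-2}$ is $O(R^{-2})$ uniformly. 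Without this specific quantitative input, the implicit-function/degree step would produce an $\vr_0(i)$ that could shrink to zero as $i\to\infty$, and the theorem would collapse.

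Two smaller points. First, your phrase ``in the concentrated regime $\lambda\to\infty$'' is off: the critical points of each $\Psi^{(i)}$ live in a fixed bounded domain $\Omega_H\subset(0,\infty)\times\R^m$ (Proposition~\ref{prop:topology-degree}), so $\lambda$ stays bounded and it is $|\xi_i|\sim|z_i|\to\infty$ alone that forces blow-up on $S^m$. Second, you do not address why $\tilde H\in C^2(S^m)$. Since $\tilde K$ is an infinite sum with translates $z_i\to\infty$, one must check that $\tilde K(y/|y|^2)$ extends $C^2$ across $y=0$; the paper shows this in Appendix~\ref{C2-smooth} and it imposes the further constraint $\beta>4\alpha+1$ on the parameters chosen in (b) above.
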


%\todo{satisfying $\tilde H\geq0$ and $\tilde H(p_0)=0$. And Theorem \ref{main thm1} can be viewed as a corollary of the following result, do not forget to use $\tilde K$}

\medskip

We end this subsection by pointing out that the following equation
\begin{\equ}\label{unperturbed equ}
	D_{\ig_{\R^m}}\psi=|\psi|_{\ig_{\R^m}}^{\frac2{m-1}}\psi
	\quad \text{on }\R^m.
\end{\equ}
can be viewed as the unperturbed equation of both  Eq. \eqref{reduced Dirac problem 1} and Eq. \eqref{reduced Dirac problem 2}. Hence, in the sequel, our framework will be build upon the study of Eq. \eqref{unperturbed equ} and its Euler-Lagrange functional
\begin{\equ}\label{unperturbed functional}
	\cj_0(\psi)=\frac12\int_{\R^m}(\psi, D_{\ig_{\R^m}}\psi)_{\ig_{\R^m}} d\vol_{\ig_{\R^m}}-\frac{m-1}{2m}\int_{\R^m}|\psi|_{\ig_{\R^m}}^{\frac{2m}{m-1}}d\vol_{\ig_{\R^m}}
\end{\equ}
where $(\cdot,\cdot)_{\ig_{\R^m}}$ and $|\cdot|_{\ig_{\R^m}}$ are the canonical hermitian product and its induced metric on the spinor bundle $\mbs(\R^m)$.
	
\subsection{Configuration spaces}
	
	To treat  Eq. \eqref{reduced Dirac problem 1} and \eqref{reduced Dirac problem 2} from a variational point of view, it is necessary to set up a functional framework. Suitable function spaces are $H^{\frac12}(M,\mbs(M))$ and $\msd^{\frac12}(\R^m,\mbs(\R^m))$ of spinor fields which are introduced in \cite{Isobe11, Isobe13}. For completeness, we give the definitions as follows. 
	
	Recall that the Dirac operator $D_\ig$ on a compact spin manifold $(M,\ig)$ is self-adjoint on $L^2(M, \mbs(M))$ and has compact resolvents (see \cite{Friedrich00, LM}). Particularly, there exists a complete orthonormal basis $\psi_1,\psi_2, . . .$ of the Hilbert space $L^2(M, \mbs(M))$ consisting of the eigenspinors of $D_\ig$: $D_\ig\psi_k = \lm_k\psi_k$. Moreover, $|\lm_k| \to\infty$as $k \to\infty$.
	
	For $s\geq0$, we define the operator $|D_\ig|^s:L^2(M, \mbs(M))\to L^2(M, \mbs(M))$ by
	\[
	|D_\ig|^s\psi=\sum_{k=1}^\infty |\lm_k|^s\al_k\psi_k,
	\]
	for $\psi=\sum_{k=1}^\infty\al_k\psi_k\in L^2(M, \mbs(M))$. Denoted by $H^s(M,\mbs(M))$ the domain of $|D_\ig|^s$, then it is clear that $\psi=\sum_{k=1}^\infty\al_k\psi_k\in H^s(M,\mbs(M))$ if and only if $\sum_{k=1}^\infty|\lm_k|^{2s}|\al_k|^2<\infty$. And hence $H^s(M,\mbs(M))$ coincides with the usual Sobolev space of order $s$, that is $W^{s,2}(M,\mbs(M))$ (cf. \cite{Adams, Ammann}). Furthermore, we can equip $H^s(M,\mbs(M))$ with the inner product
	\[
	\inp{\psi}{\va}_{s,2}:=\real(|D_\ig|^s\psi,|D_\ig|^s\va)_2+\real(\psi,\va)_2
	\]
	where $(\cdot,\cdot)_2$ is the $L^2$-inner product on spinors and the induced norm $\|\cdot\|_{s,2}$. In this paper, we are mainly concerned with the space $H^\frac12(M,\mbs(M))$ for $M=S^m$. Notice that the spectrum of $D_{\ig_{S^m}}$ on $S^m$ is bounded away from $0$ and one checks easily that $\|\psi\|_{1/2}=\big||D_{\ig_{S^m}}|^\frac12\psi\big|_2$
	defines an equivalent norm on $H^\frac12(S^m,\mbs(S^m))$.

	To simplify notations, throughout this paper, we shall denote $L^q:=L^q(M,\mbs(M))$ with the norm $|\psi|_q^q= \int_M |\psi|^qd\vol_{\ig} $ for $q\geq1$ and denote $2^*=\frac{2m}{m-1}$  the critical Sobolev exponent of the embedding $H^{1/2}(M,\mbs(M))\hookrightarrow L^q(M,\mbs(M))$ for $1\leq q\leq 2^*$.
	
	On $\R^m$, a similar function space will also be useful in our argument. Let us denote by $\msd^{\frac12}(\R^m,\mbs(\R^m))$ the set of spinor fields $\psi$ on $\R^m$ such that $\big||D_{\ig_{\R^m}}|^{1/2}\psi\big|_2^2
	<\infty$ and $|\psi|_{2^*}<\infty$ with norm $\|\psi\|:=\big||D_{\ig_{\R^m}}|^{1/2}\psi\big|_2+|\psi|_{2^*}$. Here, $|D_{\ig_{\R^m}}|^{1/2}$ is defined via the Fourier transformation: $\msf(|D_{\ig_{\R^m}}|^{1/2}\psi)(\xi)=|\xi|^{1/2}\msf(\psi)(\xi)$
	and $\big||D_{\ig_{\R^m}}|^{1/2}\psi\big|_2:=\big| |\cdot|^{1/2}\msf(\psi) \big|_2$. Notice that $\msd^{\frac12}(\R^m,\mbs(\R^m))$ is isomorphic to $H^\frac12(S^m,\mbs(S^m))$ via the stereographic projection. The dual space of $\msd^{\frac12}(\R^m,\mbs(\R^m))$ will be denoted by $\msd^{-\frac12}(\R^m,\mbs(\R^m))$.
	
\subsection{Geometric preliminaries and expansion of the perturbed functional}

In this part, we shall collect some basic results that will enable us to expand the energy functional $\cj_\vr$ for \eqref{reduced Dirac problem 1} with respect to the small parameter $\vr$ in \eqref{the metric g}-\eqref{metric form}. In order to carry this out, we need to identify Dirac operators and spinor fields on $\R^m$ with respect to the canonical Euclidean metric $\ig_{\R^m}$ and the new metric $\tilde\ig$. The construction by Bourguignon and Gauduchon \cite{BG} provides us such a necessary identification.

To begin with, let us denote $\odot_+^2(\R^m)$ the space of  positive definite symmetric bilinear forms, i.e. metrics on $\R^m$. Then, for small values of $\vr>0$, we consider the identity map
\[
\aligned
\id: (\R^m,\ig_{\R^m})&\to (\R^m,\tilde\ig)  \\
x\ &\mapsto\  x
\endaligned
\]
and the map
\[
\aligned
\tilde G :\R^m \ &\to \ \odot_+^2(\R^m) \\
x\ &\mapsto \ \tilde G_x:=(\tilde{\ig}_{ij}(x))_{ij}
\endaligned
\]
which associates to a point $x\in\R^m$ the matrix of the coefficients of the metric $\tilde{\ig}$ at this point, expressed in the basis $\pa_i=\frac{\pa}{\pa x^i}$, $i=1,\dots,m$. Notice that $\tilde G_x\in\odot_+^2(\R^m)$, hence $G_x$ is invertible and there is a unique matrix $B_x\in\odot_+^2(\R^m)$ such that $B_x^2=\tilde G_x^{-1}$. Let $b_{ij}(x)$, $i,j=1,\dots,m$, be the entries of $B_x$, we have
\[
\aligned
B_x: (T_x\R^m\cong\R^m,\ig_{\R^m}) &\to (T_x\R^m,\tilde\ig_x) \\
v=\sum_k v_k\pa_k\ &\mapsto \ B_x(v):=\sum_j\big( \sum_k b_{jk}(x)v_k\big)\pa_j
\endaligned
\]
defines an isometry for each $x\in\R^m$. As the matrix $B_x$ depends smoothly on $x$, we obtain an isomorphism of $SO(m)$-principal bundles:
\begin{displaymath}
	\xymatrix{
		P_{SO}(\R^m,\ig_{\R^m}) \ar[r]^{ \ \eta}  \ar[d] & P_{SO}(\R^m,\tilde\ig) \ar[d] \\
		\R^m\ar[r]^{\ \ \id} & \R^m
	}
\end{displaymath}
where $\eta\{v_1,\dots,v_m\}=\{B(v_1),\dots,B(v_m)\}$ for an oriented frame $\{v_1,\dots,v_m\}$ on $(\R^m,\ig_{\R^m})$. Note that since the map $\eta$ commutes with the right action of $SO(m)$, it can be lifted to spin structures:
\begin{displaymath}
	\xymatrix{
		P_{Spin}(\R^m,\ig_{\R^m}) \ar[r]^{ \tilde\eta} \ar[d] & P_{Spin}(\R^m,\tilde\ig)  \ar[d]\\
		\R^m \ar[r]^{\id}  &  \R^m
	}
\end{displaymath}
And hence we obtain an isomorphism between the spinor bundles $\mbs(\R^m,\ig_{\R^m})$ and $\mbs(\R^m,\tilde{\ig})$:
\begin{\equ}\label{spinor identify}
	\aligned
	\mbs(\R^m,\ig_{\R^m}) := P_{Spin}(\R^m,\ig_{\R^m})\times_\rho \mbs_m &\longrightarrow \mbs(\R^m,\tilde{\ig}) := P_{Spin}(\R^m,\ig)\times_\rho \mbs_m  \\
	\psi=[s,\va]&\longmapsto \tilde\psi=[\tilde\eta(s),\va]
	\endaligned
\end{\equ}
where $\rho$ is the complex spinor representation and $[s,\va]$ stands for the equivalence class of $(s,\va)$ under the action of $Spin(m)$. This identifies the spinor fields.

For the Dirac operators, as was shown by \cite[Proposition 3.2]{AGHM}, the identification can be expressed in the following formula
\begin{\equ}\label{Dirac identify}
	D_{\tilde\ig}\tilde\psi=\widetilde{D_{\ig_{\R^m}}\psi}+W\cdot_{\tilde\ig}\tilde\psi+X\cdot_{\tilde\ig}\tilde\psi+\sum_{i,j}(b_{ij}-\de_{ij})\tilde\pa_i\cdot_{\tilde\ig}\widetilde{\nabla_{\pa_j}\psi}
\end{\equ}
where $\cdot_{\tilde\ig}$ denotes the Clifford multiplication with respect to the metric $\tilde\ig$,
\[
W = \frac14\sum_{\substack{i,j,k \\ i\neq j\neq k\neq i}}\sum_{\al,\bt} b_{i\al}(\pa_{\al}b_{j\bt})b_{\bt k}^{-1}\,\tilde\pa_i\cdot_{\tilde\ig} \tilde\pa_j\cdot_{\tilde\ig} \tilde\pa_k,
\]
with $b_{ij}^{-1}$ being the entries of the inverse matrix of $B$, $\tilde\pa_i=B(\pa_i)$ and
\[
X = \frac12\sum_{i,k} \tilde\Ga_{ik}^i \tilde\pa_k,
\]
with $\tilde\Ga_{ij}^k=\tilde\ig(\tilde\nabla_{\tilde\pa_i}\tilde\pa_j,\tilde\pa_k) $ being the Christoffel symbols of the second kind. 

\begin{Rem}
	On spin manifolds, since the tangent bundle is embedded in the bundle of Clifford algebra, vector fields have two different actions on spinors, i.e. the Clifford multiplications and the covariant derivatives. Here, to distinguish the two actions on a spinor $\psi$, we denote $\pa_i\cdot_{\ig_{\R^m}}\psi$ the Clifford multiplication of $\pa_i$  and $\nabla_{\pa_i}\psi$ the covariant derivative with respect to the metric $\ig_{\R^m}$ (respectively, $\tilde\pa_i\cdot_{\tilde\ig}\tilde\psi$ the Clifford multiplication of $\tilde\pa_i$ and $\tilde\nabla_{\tilde\pa_i}\tilde\psi$ the covariant derivative with respect to the metric $\tilde\ig$). For functions, we shall simply denote $\pa_i u$ for its partial derivative.
	
\end{Rem}

Now we recall some formulas given in \cite{Isboe-Xu21} which are direct consequences of some elementary computations and will be useful in our framework.

\begin{Lem}\label{basic expansions}
	Let $\tilde\ig_\vr$ be given by \eqref{the metric g}-\eqref{h-expansion}, we have
	\begin{\equ}\label{expansion det G}
		\sqrt{\det\tilde G_\vr}=1+\frac\eps2\tr\tilde\ih+\eps^2\Big(
		\frac18(\tr\tilde\ih)^2-\frac14\tr(\tilde\ih^2)\Big)+o(\eps^2),
	\end{\equ}
	%\begin{\equ}\label{expansion G inverse}
	%	\tilde G^{-1}=I-\eps\ih+\eps^2\ih^2+o(\eps^2)
	%\end{\equ}
	\begin{\equ}\label{expansion B}
		B_\vr=I-\frac\eps2\tilde\ih+\frac{3\,\eps^2}8\tilde\ih^2+o(\eps^2)
	\end{\equ}
	and
	\begin{\equ}\label{expansion B inverse}
		B^{-1}_\vr=I+\frac\eps2\tilde\ih-\frac{\eps^2}8\tilde\ih^2+o(\eps^2).
	\end{\equ}
\end{Lem}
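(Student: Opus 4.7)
The plan is to reduce each of the three expansions to a matrix-valued Taylor expansion in $\vr$, using only the definitions $\tilde G_\vr=I+\vr\tilde\ih$ (from \eqref{metric form}) and $B_\vr^2=\tilde G_\vr^{-1}$. Since $\tilde\ih$ is a fixed smooth symmetric matrix field and $\vr$ is small, all expansions take place in a commutative setting once $\tilde\ih$ is fixed at a point $x$, so one can treat the computation pointwise and entrywise without worrying about ordering issues.

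For \eqref{expansion det G}, I would first recall the general identity
\[
\det(I+\vr A)=1+\vr\,\tr A+\tfrac{\vr^2}{2}\bigl((\tr A)^2-\tr(A^2)\bigr)+O(\vr^3),
\]
apply it with $A=\tilde\ih$, and then substitute into the scalar Taylor expansion $\sqrt{1+t}=1+\tfrac{t}{2}-\tfrac{t^2}{8}+O(t^3)$. Collecting the order-$\vr$ term gives $\tfrac{1}{2}\tr\tilde\ih$; the order-$\vr^2$ term combines $\tfrac{1}{4}\bigl((\tr\tilde\ih)^2-\tr(\tilde\ih^2)\bigr)$ from the determinant with $-\tfrac{1}{8}(\tr\tilde\ih)^2$ from the $-t^2/8$ correction of $\sqrt{1+t}$, producing exactly $\tfrac{1}{8}(\tr\tilde\ih)^2-\tfrac{1}{4}\tr(\tilde\ih^2)$ as claimed.

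For \eqref{expansion B} and \eqref{expansion B inverse}, I would use the ansatz $B_\vr=I+\vr A_1+\vr^2 A_2+o(\vr^2)$ and determine $A_1,A_2$ by matching powers of $\vr$ in $B_\vr^2=\tilde G_\vr^{-1}$. To get the right-hand side, first compute the Neumann expansion $\tilde G_\vr^{-1}=(I+\vr\tilde\ih)^{-1}=I-\vr\tilde\ih+\vr^2\tilde\ih^2+o(\vr^2)$. Then $B_\vr^2=I+2\vr A_1+\vr^2(A_1^2+2A_2)+o(\vr^2)$, so matching orders gives $A_1=-\tfrac{1}{2}\tilde\ih$ and $A_2=\tfrac{1}{2}(\tilde\ih^2-A_1^2)=\tfrac{3}{8}\tilde\ih^2$, which is \eqref{expansion B}. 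For \eqref{expansion B inverse}, the same ansatz applied to $B_\vr^{-2}=\tilde G_\vr=I+\vr\tilde\ih$ (or equivalently the identity $B_\vr\cdot B_\vr^{-1}=I$ combined with \eqref{expansion B}) yields $B_\vr^{-1}=I+\tfrac{\vr}{2}\tilde\ih-\tfrac{\vr^2}{8}\tilde\ih^2+o(\vr^2)$.

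The computation is essentially routine: no inequality, compactness, or analytic subtlety enters. The only point that requires a mild check is that the square root of $\tilde G_\vr^{-1}$ is unambiguously defined by the expansion, but this is immediate because $\tilde G_\vr^{-1}=I+O(\vr)$ is symmetric positive definite for $|\vr|$ small, so the principal square root is smooth in $\vr$ around $\vr=0$ and its Taylor coefficients are uniquely determined by $B_\vr^2=\tilde G_\vr^{-1}$. Thus the main ``obstacle'' is merely careful bookkeeping of the order-$\vr^2$ terms, which the above matching procedure handles directly.
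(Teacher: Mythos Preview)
Your proposal is correct. The paper does not actually supply a proof of this lemma; it merely states that the formulas are ``direct consequences of some elementary computations'' and cites \cite{Isboe-Xu21}, and your Taylor-matching argument is exactly that elementary computation.
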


With the notation of Bourguignon-Gauduchon identification, for a spinor $\tilde\psi$ in $\mbs(\R^m,\tilde\ig_\vr)$, the energy functional associated to \eqref{reduced Dirac problem 1} is defined as
\begin{\equ}\label{functional dimension m}
	\cj_\vr(\tilde\psi)=\frac12\int_{\R^m}(\tilde\psi,D_{\tilde\ig_\vr}\tilde\psi)_{\tilde\ig_\vr}\,d\vol_{\tilde\ig_\vr}-\frac1{2^*}\int_{\R^m}|\tilde\psi|_{\tilde\ig_\vr}^{2^*}\,d\vol_{\tilde\ig_\vr}.
\end{\equ}
It turns out that
$\cj_\vr$ is an $\eps$-involved functional because $\tilde\ig_\vr$ is given by \eqref{the metric g}-\eqref{metric form}. Via the Bourguignon-Gauduchon identification, we have the following expansion:
\begin{Lem}\label{expansion J functional}
	Let $\tilde\ig$ be given by \eqref{the metric g}-\eqref{h-expansion}, then
	\begin{\equ}\label{expansion of cj-vr}
	\cj_\vr(\tilde\psi)=\cj_0(\psi)+\eps \Ga(\psi)+\eps^2\Phi(\psi)+o(\eps^2),
	\end{\equ}
	where 
	\[
	\cj_0(\psi)=\frac12\int_{\R^m}(\psi,D_{\ig_{\R^m}}\psi)_{\ig_{\R^m}}\,d\vol_{\ig_{\R^m}}-\frac1{2^*}\int_{\R^m}|\psi|_{\ig_{\R^m}}^{2^*}\,d\vol_{\ig_{\R^m}},
	\]
	\[
	\Ga(\psi)=\int_{\R^m}\frac{\tr\tilde\ih}2\Big[ \frac12\big(\psi,D_{\ig_{\R^m}}\psi \big)_{\ig_{\R^m}}-\frac1{2^*}|\psi|_{\ig_{\R^m}}^{2^*}\Big]
	-\frac14\sum_i\tilde\ih_{ii}\real\big(\pa_i\cdot_{\ig_{\R^m}}\nabla_{\pa_i}\psi,\psi \big)_{\ig_{\R^m}} d\vol_{\ig_{\R^m}}
	\]
	and
	\[
	\aligned
	\Phi(\psi)&=\int_{\R^m}\Big(
	\frac18(\tr\tilde\ih)^2-\frac14\tr(\tilde\ih^2)\Big)\Big[ \frac12\big(\psi,D_{\ig_{\R^m}}\psi \big)_{\ig_{\R^m}}-\frac1{2^*}|\psi|_{\ig_{\R^m}}^{2^*}\Big] \\[0.5em]
	&\qquad
	+\frac1{16}\sum_i\Big( 3\tilde\ih_{ii}^2-2(\tr\tilde\ih)\tilde\ih_{ii} \Big)\real(\pa_i\cdot_{\ig_{\R^m}}\nabla_{\pa_i}\psi,\psi)_{\ig_{\R^m}} d\vol_{\ig_{\R^m}}
	\endaligned
	\]
	for $\psi\in\msd^{\frac12}(\R^m,\mbs(\R^m))$. 
\end{Lem}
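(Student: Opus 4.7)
The plan is to use the Bourguignon--Gauduchon identification $\psi\leftrightarrow\tilde\psi$ in \eqref{spinor identify} to rewrite every quantity on $(\R^m,\tilde\ig_\vr)$ as an integral on $(\R^m,\ig_{\R^m})$ with an $\vr$--dependent weight, and then Taylor--expand in $\vr$ using the expansions of $\sqrt{\det\tilde G_\vr}$, $B_\vr$ and $B_\vr^{-1}$ in Lemma \ref{basic expansions}. Two structural facts drive the computation: the identification is a fiberwise isometry, so $|\tilde\psi|_{\tilde\ig_\vr}=|\psi|_{\ig_{\R^m}}$ pointwise, and the volume form transforms simply as $d\vol_{\tilde\ig_\vr}=\sqrt{\det\tilde G_\vr}\,d\vol_{\ig_{\R^m}}$.

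First I would treat the potential term. By the isometry and \eqref{expansion det G},
\[
\int_{\R^m}|\tilde\psi|_{\tilde\ig_\vr}^{2^*}d\vol_{\tilde\ig_\vr}=\int_{\R^m}|\psi|_{\ig_{\R^m}}^{2^*}\Big(1+\tfrac{\vr}{2}\tr\tilde\ih+\vr^{2}\big(\tfrac18(\tr\tilde\ih)^2-\tfrac14\tr(\tilde\ih^2)\big)+o(\vr^2)\Big)d\vol_{\ig_{\R^m}},
\]
which accounts for the $-\frac{1}{2^*}(\dots)|\psi|_{\ig_{\R^m}}^{2^*}$ pieces in $\Ga$ and $\Phi$.

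Next I would expand the kinetic term $\frac12\int(\tilde\psi,D_{\tilde\ig_\vr}\tilde\psi)_{\tilde\ig_\vr}d\vol_{\tilde\ig_\vr}$ via formula \eqref{Dirac identify}. The leading piece $\widetilde{D_{\ig_{\R^m}}\psi}$ together with the isometry and \eqref{expansion det G} produces $\frac12(\psi,D_{\ig_{\R^m}}\psi)_{\ig_{\R^m}}$ at order $\vr^{0}$, plus the $(\tr\tilde\ih)$--weighted and the $\big((\tr\tilde\ih)^2-2\tr(\tilde\ih^2)\big)$--weighted companions inside $\Ga$ and $\Phi$. The term $\sum_{i,j}(b_{ij}-\de_{ij})\tilde\pa_i\cdot_{\tilde\ig}\widetilde{\nabla_{\pa_j}\psi}$ is analyzed using $b_{ij}-\de_{ij}=-\frac{\vr}{2}\tilde\ih_{ij}+\frac{3\vr^2}{8}(\tilde\ih^2)_{ij}+o(\vr^2)$, and the diagonality of $\tilde\ih$ in \eqref{metric form} collapses the sums to a single index, giving exactly the terms $-\frac14\sum_i\tilde\ih_{ii}\real(\pa_i\cdot_{\ig_{\R^m}}\nabla_{\pa_i}\psi,\psi)_{\ig_{\R^m}}$ at order $\vr$ and $\frac{3}{16}\sum_i\tilde\ih_{ii}^{\,2}\real(\pa_i\cdot_{\ig_{\R^m}}\nabla_{\pa_i}\psi,\psi)_{\ig_{\R^m}}$ at order $\vr^2$. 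Cross--multiplication with the factor $\sqrt{\det\tilde G_\vr}$ contributes the remaining $-\frac18(\tr\tilde\ih)\tilde\ih_{ii}$ coefficient at order $\vr^2$.

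The remaining pieces in \eqref{Dirac identify} need to be controlled by symmetry. The $W$--term is a linear combination of triple Clifford products $\tilde\pa_i\cdot_{\tilde\ig}\tilde\pa_j\cdot_{\tilde\ig}\tilde\pa_k$ with $i,j,k$ pairwise distinct; since Clifford multiplication by an orthonormal vector is skew--hermitian, such a triple product is skew--hermitian, and hence $\real(W\cdot_{\tilde\ig}\tilde\psi,\tilde\psi)_{\tilde\ig}\equiv 0$, eliminating any $W$ contribution. The vector field $X=\frac12\sum_{i,k}\tilde\Ga^{i}_{ik}\tilde\pa_k$ gives a term $\real(X\cdot_{\tilde\ig}\tilde\psi,\tilde\psi)_{\tilde\ig}$ that, after transferring back, can be rearranged by integration by parts together with the $(b_{ij}-\de_{ij})$ term; here the identities $\tilde\Ga^i_{ik}=\frac12\pa_k\log\det\tilde G_\vr+O(\vr)$ and $d\vol_{\tilde\ig_\vr}=\sqrt{\det\tilde G_\vr}\,d\vol_{\ig_{\R^m}}$ ensure that the combined divergence pieces vanish, so no new terms survive beyond those listed in $\Ga$ and $\Phi$.

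The main obstacle is the bookkeeping at order $\vr^{2}$: one must collect contributions from the $\vr$--expansion of $\sqrt{\det\tilde G_\vr}$ multiplied into the $O(\vr)$ piece of $(\tilde\psi,D_{\tilde\ig_\vr}\tilde\psi)_{\tilde\ig_\vr}$, from the $\vr^{2}$ piece of $B_\vr$ in \eqref{expansion B}, and from the $X$ and $W$ corrections, and verify that all non--listed terms either vanish by the skew--hermiticity of Clifford multiplication or cancel via integration by parts. Once this accounting is done, the formulas for $\Ga(\psi)$ and $\Phi(\psi)$ follow, and the remainder is $o(\vr^2)$ uniformly on bounded subsets of $\msd^{1/2}(\R^m,\mbs(\R^m))$ by the regularity assumptions on $\tilde\ih$.
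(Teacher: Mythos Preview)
Your overall strategy---use the Bourguignon--Gauduchon identification together with the expansions in Lemma~\ref{basic expansions}---is exactly the paper's approach, and your treatment of the potential term, the leading kinetic term, and the $(b_{ij}-\de_{ij})$ contribution is correct. However, your handling of the $W$ and $X$ terms is essentially reversed, and the argument for $W$ contains a genuine error.

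For $W$: a product $e_i\cdot e_j\cdot e_k$ of three \emph{distinct} orthonormal vectors is \emph{self-adjoint}, not skew-hermitian. Indeed, $(e_ie_je_k)^*=(-e_k)(-e_j)(-e_i)=-e_ke_je_i=e_ie_je_k$, using that distinct $e_\ell$'s anticommute. Hence $\real(W\cdot_{\tilde\ig}\tilde\psi,\tilde\psi)_{\tilde\ig}$ does \emph{not} vanish by symmetry alone. The paper's argument (carried out in Appendix~\ref{proof of lemma 4.5}) is different and relies crucially on the diagonality hypothesis~\eqref{metric form}: expanding $b_{i\al}(\pa_\al b_{j\bt})b_{\bt k}^{-1}$ via Lemma~\ref{basic expansions}, every surviving term up to order $\vr^2$ forces $j=\bt=k$, which contradicts $j\neq k$. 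Thus $W=o(\vr^2)$ and contributes nothing to $\Ga$ or $\Phi$. Your skew-hermiticity claim would, if true, dispense with the diagonality assumption---but it is false.

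For $X$: here your integration-by-parts argument is unnecessary. Since $X$ is a real tangent vector field, Clifford multiplication by $X$ \emph{is} skew-hermitian, so $\real(X\cdot_{\tilde\ig}\tilde\psi,\tilde\psi)_{\tilde\ig}=0$ pointwise, with no need to invoke $\tilde\Ga^i_{ik}$ or divergence cancellations. (This is precisely the one-line argument in the appendix, written there for the bilinear form $\real(\tilde\va,X\cdot\tilde\psi)+\real(\tilde\psi,X\cdot\tilde\va)$.) In short, the skew-hermiticity mechanism you invoked for $W$ actually applies to $X$, while $W$ must be killed by the diagonal structure of $\tilde\ih$.
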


\begin{Rem}
We omit the detailed proof of Lemma \ref{expansion J functional} because the main computations are already contained in the proof of Lemma \ref{lem:inequalities} later, see Appendix \ref{proof of lemma 4.5} or \cite[Lemma 4.3]{Isboe-Xu21}.
\end{Rem}
	
	\section{Abstract settings}\label{sec:abstract results}
	
	\subsection{Lyapunov–Schmidt reduction of the functional}
	
	It is clear that, on $\msd^{\frac12}(\R^m,\mbs(\R^m))$, the functional $\cj_0$ in \eqref{unperturbed functional} is well-defined and is of $C^2$ class. We call $\cj_0$ the unperturbed functional. Its critical points are solutions to the unperturbed spinorial Yamabe equation \eqref{unperturbed equ}. Recall that Eq. \eqref{reduced Dirac problem 1} and \eqref{reduced Dirac problem 2} are perturbations from Eq.  \eqref{unperturbed equ} and it is very natural to expect that solutions of  Eq. \eqref{reduced Dirac problem 1} and \eqref{reduced Dirac problem 2} can be obtained as perturbations of critical points of $\cj_0$. In general, a well adopted Lyapunov-Schmidt reduction technique provides a powerful tool, see for instance \cite[Chapter 10]{MW} and \cite[II, 6]{Chang} where the reduced problem is compact and \cite{AB, AB98, AM} for the case that the reduced problem is non-compact.
	For completeness, following \cite{AM}, we sketch the idea as follows.
	
	Let $(\ch,\inp{\cdot}{\cdot})$ be a Hilbert space with the associated norm $\|\cdot\|:=\inp{\cdot}{\cdot}^{1/2}$. Suppose that $L_0\in C^2(\ch,\R)$ and $\Ga\in C^2(\ch,\R)$ are given. For $\eps>0$ small, we consider the perturbed functional
	\begin{\equ}\label{model problem}
		L_\eps(z)=L_0(z)+\eps \Ga(z)+o(\eps).
	\end{\equ}
	Assume that $L_0$ has a non-degenerate critical manifold $\cm\subset\ch$, that is,
	\begin{itemize}
		\item[$(A1)$] $\cm$ is a $d$-dimensional $C^2$-submanifold of $\ch$ such that $\nabla L_0(z)=0$ for all $z \in \cm$,
		
		\item[$(A2)$] $\cm$ is non-degenerate in the sense that for all $z \in\cm$, we have $T_z\cm = \ker \nabla^2 L_0 (z)$,
		
		\item[$(A3)$] $\nabla^2 L_0 (z) : \ch \to\ch$ is a Fredholm operator with index zero for all $z \in\cm$.
	\end{itemize}
	
	Once the conditions $(A1)$-$(A3)$ are satisfied, the problem of finding critical points of $L_\eps$ on $\ch$ can be reduced to the same problem on $\cm$ for a reduced functional $L_\eps^{red}$ (defined below). And critical points of $L_\eps$ will be obtained as small perturbations of critical points of $L_0$.
	
	Set $\cw_z:=T_z\cm^\bot$, where the orthogonal complement is taking with respect to $\inp{\cdot}{\cdot}$ in $\ch$. We look for critical points of $L_\eps$ in the form $u=z+w$ where $z\in\cm$ and $w\in \cw_z$. Let $P_z:\ch\to\cw_z$ be the orthogonal projection onto $\cw_z$, the Euler-Lagrange equation $\nabla L_\eps(z+w)=0$ is equivalent to
	\begin{\equ}\label{L-S reduction}
		\left\{\aligned
		&P_z\nabla L_\eps(z+w)=0  & & (\textit{auxiliary equation})\\
		&(I-P_z)\nabla L_\eps(z+w)=0 & &  (\textit{bifurcation equation}).
		\endaligned \right.
	\end{\equ}
	Then, under the conditions $(A2)$ and $(A3)$, the auxiliary equation in \eqref{L-S reduction} can be solved firstly for $w$ by applying the implicit function theorem: for arbitrary $z\in\cm$ there is a unique small solution $w=w_\vr(z)\in\cw_z$ for small values of $\eps$. Furthermore, on any compact subset $\cm_c\subset\cm$, one can have the uniform estimate:
	\begin{\equ}\label{AM esti}
		\cm_c\ni z\mapsto w_\eps(z)\in \cw_z \text{ is } C^1 \text{ and }
		\|w_\eps(z)\|,\ \|w_\eps'(z)\|= O(\eps) \text{ uniformly for }
		z\in\cm_c.
	\end{\equ}
	For the proofs of \eqref{AM esti} and more detailed explanation about this construction, see \cite[Chapter 2]{AM}.
	
	The next step is to consider the bifurcation equation in \eqref{L-S reduction}. To this end, we introduce the reduced functional $L_\eps^{red}:\cm\to\R$ by
	\[
	L_\eps^{red}(z)=L_\eps(z+w_\eps(z))
	\]
	Then we have the following theorem
	\begin{Thm}[Theorem 2.12 in \cite{AM}]\label{AM Theorem}
		Suppose $(A1)$-$(A3)$ are satisfied. Assume that for a compact subset $\cm_c\subset\cm$ and $\eps>0$ small, $L_\eps^{red}$ has a critical point $z_\eps\in\cm_c$. Then $u_\eps=z_\eps+w_\eps(z_\eps)$ is a critical point of $L_\eps$ on $\ch$.
	\end{Thm}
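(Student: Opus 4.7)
The plan is to exploit two structural facts simultaneously. First, $\nabla L_\eps^{red}(z_\eps)=0$ can be rewritten via the chain rule as a $d$-dimensional family of orthogonality conditions on $\nabla L_\eps(u_\eps)$. Second, the auxiliary equation in \eqref{L-S reduction} forces $\nabla L_\eps(u_\eps)\in T_{z_\eps}\cm$. Combining these with the uniform smallness estimate $\|w_\eps'(z)\|=O(\eps)$ on $\cm_c$ from \eqref{AM esti}, the only vector in $T_{z_\eps}\cm$ compatible with both constraints turns out to be zero, which is the desired conclusion $\nabla L_\eps(u_\eps)=0$.

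Concretely, I would fix local coordinates $(\xi_1,\dots,\xi_d)$ on $\cm$ around $z_\eps$ with induced basis $\{e_1,\dots,e_d\}$ of $T_{z_\eps}\cm$. Applying the chain rule to $L_\eps^{red}(z)=L_\eps(z+w_\eps(z))$ gives
\[
\frac{\partial L_\eps^{red}}{\partial\xi_i}(z_\eps)=\inp{\nabla L_\eps(u_\eps)}{e_i+w_\eps'(z_\eps)[e_i]},
\]
which vanishes for every $i$ by assumption. Meanwhile, the auxiliary equation forces $\nabla L_\eps(u_\eps)\in T_{z_\eps}\cm=\cw_{z_\eps}^{\perp}$. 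Decomposing
\[
w_\eps'(z_\eps)[e_i]=\tau_i+\nu_i,\qquad \tau_i\in T_{z_\eps}\cm,\ \nu_i\in\cw_{z_\eps},
\]
the pairing with $\nabla L_\eps(u_\eps)$ annihilates the $\nu_i$-part and leaves
\[
\inp{\nabla L_\eps(u_\eps)}{\,e_i+\tau_i\,}=0,\qquad i=1,\dots,d.
\]

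Writing $\nabla L_\eps(u_\eps)=\sum_j\alpha_j e_j$, this system reads $A\alpha=0$ with $A_{ij}=\inp{e_j}{e_i+\tau_i}$. At $\eps=0$ the matrix $A$ collapses to the Gram matrix of $\{e_1,\dots,e_d\}$ and is invertible; the uniform bound in \eqref{AM esti} yields $\|\tau_i\|\leq\|w_\eps'(z_\eps)\|=O(\eps)$ on $\cm_c$, so $A$ remains invertible for all sufficiently small $\eps$. Hence $\alpha=0$ and $\nabla L_\eps(u_\eps)=0$, proving $u_\eps$ is a critical point of $L_\eps$ on $\ch$.

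The main subtlety, and the sole point where uniformity is genuinely needed, is that although $w_\eps(z)\in\cw_z$ for every $z\in\cm$, the derivative $w_\eps'(z)[v]$ generally does \emph{not} lie in $\cw_z$: the fiber $\cw_z=T_z\cm^{\perp}$ twists with $z$ inside $\ch$, which is exactly why a tangential correction $\tau_i$ arises. It is this feature that makes the uniform control $\|w_\eps'(z)\|=O(\eps)$ on the compact piece $\cm_c$, rather than pointwise smallness of $w_\eps(z)$ alone, the crucial input that closes the perturbative invertibility argument.
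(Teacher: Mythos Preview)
Your argument is correct and is precisely the standard proof of this reduction lemma. The paper does not supply its own proof of this statement: it is quoted verbatim as Theorem~2.12 of \cite{AM} and used as a black box, with the reader referred to \cite[Chapter 2]{AM} for details. Your chain-rule computation, the observation that the auxiliary equation forces $\nabla L_\eps(u_\eps)\in T_{z_\eps}\cm$, and the perturbative invertibility of the matrix $A$ via the uniform bound $\|w_\eps'(z)\|=O(\eps)$ on $\cm_c$ together reproduce exactly the argument given in Ambrosetti--Malchiodi. Your closing remark about the twisting of the fibers $\cw_z$ and the resulting tangential component $\tau_i$ is the right diagnosis of why the derivative estimate, and not merely smallness of $w_\eps$, is what makes the argument close.
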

	
	Thanks to the uniform estimate \eqref{AM esti}, the reduced functional $L_\eps^{red}$ is well approximated in the sense that
	\begin{\equ}\label{L approximate}
		L_\eps^{red}(z)=L_0(z)+\eps\Ga(z)+o(\eps), \quad
		\nabla L_\eps^{red}(z)=\eps\nabla \Ga(z)+o(\eps).
	\end{\equ}
	and $L_0(z)$ is constant on any connected component of $\cm$. Thus if $z\in\cm$ is a non-degenerate critical point of $\Ga$ in some certain sense (for example, the local degree of $\nabla\Ga$ at $z$ is non-zero), then $z$ generates a critical point of
	$L_\eps$ on $\ch$ (see \cite{AM, Chang, MW} for details).
	
	\begin{Rem}
	Returning to the problems \eqref{reduced Dirac problem 1} and \eqref{reduced Dirac problem 2}, a very natural idea is to apply the above abstract framework to the functionals given by $L_0=\cj_0$ and $L_\vr=\cj_\vr$ on $\ch=\msd^{\frac12}(\R^m,\mbs(\R^m))$ (see Lemma \ref{expansion J functional} for the functionals associated to Eq. \eqref{reduced Dirac problem 1}, while the functionals associated to Eq. \eqref{reduced Dirac problem 2} are much easier to obtain). As was already shown in \cite[Section 5, 6]{Isobe13} that $\cj_0$ satisfies $(A1)$-$(A3)$ for a critical manifold $\cm$ defined as
	\begin{\equ}\label{critical manifold}
		\cm:=\big\{\psi_{\lm,\xi,\ga}:\, \lm>0,\ \xi\in\R^m,\ \ga\in\mbs_m,\ |\ga|=1\big\},
	\end{\equ}
	where
	\begin{\equ}\label{critical manifold explicit}
		\psi_{\lm,\xi,\ga}(x)=\frac{m^{\frac{m-1}2}\lm^{\frac{m-1}2}}
		{\big(\lm^2+|x-\xi|^2 \big)^{\frac m2}}(\lm-(x-\xi))\cdot_{\ig_{\R^m}}\ga
	\end{\equ}
	for $\lm>0$, $\xi\in\R^m$, $\ga\in\mbs_m$ with $|\ga|=1$ ($\mbs_m$ is the spinor module, see\cite{Friedrich00, LM}) and $\cdot_{\ig_{\R^m}}$ denotes the Clifford multiplication with respect to the Euclidean metric. Note that $\cm$ is diffeomorphic to $(0,\infty)\times\R^m\times S^{2^{[\frac m2]+1}-1}(\mbs_m)$ via the canonical map $(\lm,\xi,\ga)\mapsto \psi_{\lm,\xi,\ga}$, where $S^{2^{[\frac m2]+1}-1}(\mbs_m)$ stands for the $(2^{[\frac m2]+1}-1)$-dimensional unit sphere in $\mbs_m$. And hence $\cm$ is a non-compact manifold and the dimension of $\cm$ is $m+2^{[\frac m2]+1}$.
	\end{Rem}
	
	\subsection{Perturbation method with  degenerate conditions}
	
	Unfortunately, in the spinorial setting, the reduced functional $L_\eps^{red}$ happens to have much worse analytic properties than the usual cases, and one of these ``bad'' behaviors is the degeneracy on $\cm$. This, for instance, can be seen from the explicit formulations of those perturbation terms in \eqref{expansion of cj-vr} where $\Ga$ and $\Phi$ do not depend on all variables of $\cm$ (in fact, if we substitute \eqref{critical manifold}-\eqref{critical manifold explicit} into \eqref{expansion of cj-vr}, we find that $\Ga$ and $\Phi$ do not depend on the variable $\ga$ in $\cm$). Hence, critical points of the functional $\cj_\vr$ can not be obtained via non-degenerate arguments, in particular,	standard methods as in \cite{AM99, AM, Chang, MW} do not apply.
	
	Here we recall a recent framework developed in \cite[Section 2]{Isboe-Xu21}, which can be employed to handle spinor field equations like \eqref{reduced Dirac problem 1} and \eqref{reduced Dirac problem 2}. To see this, besides the assumptions $(A1)$-$(A3)$, we will need the following additional conditions for the critical manifold $\cm$:
	\begin{itemize}
		\item [$(A4)$] $\cm$ admits a (globally) trivializable fiber bundle structure over a compact base space $\cn$ with projection $\vartheta:\cm\to\cn$ and fiber $\cg$. Precisely, there is a fiber preserving diffeomorphism $\iota:\cg\times\cn\to\cm$ such that the following diagram commutes
		\begin{displaymath}
			\xymatrix{
				\cg\times\cn \ar[r]^{\ \ \iota}  \ar[d]_{Proj} & \cm \ar[d]^{\vartheta} \\
				\cn\ar[r]^{\ \id} & \cn
			}
		\end{displaymath}

		\item[$(A5)$]  $T_\ga\,\cn\subset \ker\nabla(\Ga\circ\iota)(g,\ga)$ for any $(g,\ga)\in \cg\times\cn$, where we have identified $T_\ga\,\cn$ as a subspace of the total tangent space $T_{(g,\ga)}(\cg\times\cn)$.
		
	\end{itemize}

	\begin{Rem}
		\begin{itemize}
			\item[(1)] In our application $\cn=S^{2^{[\frac m2]+1}-1}(\mbs_m)$, $\cg=(0,+\infty)\times\R^m$ and $\iota(g,\ga):=\psi_{\lm,\xi,\ga}$ for $g=(\lm,\xi)\in\cg$ and $\ga\in\cn$, hence we have a very natural bundle structure on $\cm$. Particularly, we note that there is a continuous action $\cg\times\cm\to\cm$ such that $\cg$ preserves the fibers of $\cm$ (i.e. if $(\mu,y)\in \cg$ and $\psi_{\lm,\xi,\ga}\in\cm_\ga$ then $\psi_{\lm,\xi,\ga}*(\mu,y)=\psi_{\lm\mu,\,\xi+y,\ga}\in\cm_\ga$). Hence the critical manifold in \eqref{critical manifold} is essentially a principal $\cg$-bundle. And since it admits a global section, we easily see that $\cm$ is trivializable. This is the reason we introduce condition $(A4)$.
			
			\item[(2)] Note that if $\cm$ is parameterized via the map $\iota$, condition $(A4)$ makes the variational problem even clearer: it is equivalent to consider the functional $L_\eps^{red}\circ\iota:\cg\times\cn\to\R$.  Comparing with the standard theory in \cite{AM, Chang, MW}, the distinct new feature $(A5)$ describes a certain degenerate situation and, particularly, it implies that $\Ga\circ\iota(g,\ga)$ depends only on the variables in the fiber space $\cg$. Thus we shall turn to study $\tilde\Ga(g)=\Ga\circ\iota(g,\ga)$. For later use, we distinguish $(A5)$ into the following two cases:
			\[
			\left\{
			\aligned
			&\ker\nabla(\Ga\circ\iota)(g,\ga)\equiv T_{(g,\ga)}(\cg\times\cn) & &\text{for all } (g,\ga)\in\cg\times\cn, \\[0.5em]
			&\ker\nabla(\Ga\circ\iota)(g,\ga)\neq T_{(g,\ga)}(\cg\times\cn) & &\text{for some } (g,\ga)\in\cg\times\cn, 
			\endaligned	\right.
			\]
			and we will collect two abstract results which are useful in the spinorial Yamabe-type problems.
		\end{itemize}
	\end{Rem}

\subsubsection*{Case 1: $\ker\nabla(\Ga\circ\iota)(g,\ga)\equiv T_{(g,\ga)}(\cg\times\cn)$ for all $(g,\ga)\in\cg\times\cn$}

In this setting, we have $\Ga\circ\iota(g,\ga)\equiv constant$ on $\cg\times\cn$ and we need to evaluate further terms in the expansion of $L_\eps^{red}$. For this purpose, let us develop the expansion \eqref{model problem} in powers of $\eps$ as
\begin{\equ}\label{model problem2}
	L_\eps(z)=L_0(z)+\eps\Ga(z)+\eps^2\Phi(z)+o(\eps^2)
\end{\equ}

Note that $\Ga\circ\iota(g,\ga)\equiv constant$ on $\cg\times\cn$ is equivalent to $\Ga(z)\equiv constant$ on $\cm$. It follows that $\nabla \Ga(z)\in \cw_z:=T_z\cm^\bot$. Recall that $w_\eps(z)$ is the solution to the auxiliary equation $P_z\nabla L_\eps(z+w)=0$, hence we have
$\nabla L_\eps(z+w_\eps(z))\in T_z\cm$. For a fixed $z\in\cm$, using Taylor expansion, one sees
\[
\aligned
\nabla L_\eps(z+w_\eps(z))&=\nabla L_0(z+w_\eps(z))+\eps\nabla\Ga(z+w_\eps(z))+o(\eps)\\[0.2em]
&=\nabla^2L_0(z)[w_\eps(z)] +\eps\nabla\Ga(z)+\eps\nabla^2\Ga(z)[w_\eps(z)] + o(\|w_\eps(z)\|) +o(\eps).
\endaligned
\]
Then, form \eqref{AM esti} and the fact $\nabla L_\eps(z+w_\eps(z))\in T_z\cm$, it follows that
\[
\nabla^2L_0(z)[w_\eps(z)]+\eps \nabla\Ga(z)+o(\eps)\in T_z\cm.
\]
And hence, by projecting the above equation into $\cw_z$, we deduce
\begin{\equ}\label{w-vr}
w_\eps(z)=-\eps K_z(\nabla\Ga(z))+o(\eps),
\end{\equ}
where $K_z$ stands for the inverse of $\nabla^2L_0(z)$ restricted to $\cw_z$. Now, we can expand $L_\eps^{red}(z):=L_\eps(z+w_\eps(z))$ as
\begin{\equ}\label{L-vr-expansion-deeper}
\aligned
L_\eps^{red}(z)&=L_0(z)+\frac12\nabla^2L_0(z)[w_\eps(z),w_\eps(z)]\\
&\qquad +\eps\Ga(z)+\eps\nabla\Ga(z)[w_\eps(z)]+\eps^2\Phi(z)+o(\eps^2)\\
&=L_0(z)+\eps \Ga(z)+\eps^2\Big( \Phi(z)-\frac12\inp{K_z(\nabla\Ga(z))}{\nabla\Ga(z)} \Big) +o(\eps^2).
\endaligned
\end{\equ}
Here, we emphasize that both $L_0(z)$ and $\Ga(z)$ are constants on $\cm$. The following result is due to \cite[Theorem 2.6]{Isboe-Xu21}.

\begin{Thm}\label{abstract result3}
	Let $L_0,\Ga,\Phi\in C^2(\ch,\R)$ as in \eqref{model problem2} and suppose that $(A1)$-$(A5)$ are satisfied. If there is an open bounded subset $U\subset\cg$ such that
	\[
	\inf_{\ga\in\cn}\Big(\min_{\pa U}\hat\Phi\big|_{\vartheta^{-1}(\ga)} -\min_{\ov U}\hat\Phi\big|_{\vartheta^{-1}(\ga)}\Big)>0 \quad \text{or} \quad 
	\sup_{\ga\in\cn}\Big(\max_{\pa U}\hat\Phi\big|_{\vartheta^{-1}(\ga)} -\max_{\ov U}\hat\Phi\big|_{\vartheta^{-1}(\ga)}\Big)<0,
	\]
	where $\hat\Phi\big|_{\vartheta^{-1}(\ga)}=\hat\Phi\circ\iota(\cdot,\ga)$ and
	\[
	\hat\Phi(z):=\Phi(z)-\frac12\inp{K_z(\nabla\Ga(z))}{\nabla\Ga(z)} \quad \text{for } z\in\cm.
	\]
	Then, for $|\eps|$ small, the functional $L_\eps$ has a critical point on $\cm$.
\end{Thm}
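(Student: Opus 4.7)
The plan is to invoke Theorem \ref{AM Theorem} after identifying a critical point of the reduced functional $L_\eps^{red}$ inside a suitable compact region of $\cm$. Under the standing assumption of Case 1, $\Ga\circ\iota$ is constant on $\cg\times\cn$, so $\Ga(z)$ is constant on $\cm$; $L_0(z)$ is also constant on $\cm$ by $(A1)$. Consequently, the expansion \eqref{L-vr-expansion-deeper} composed with $\iota$ yields
\[
F_\eps(g,\ga):=L_\eps^{red}(\iota(g,\ga))=c_0+\eps c_1+\eps^2\,\hat\Phi(\iota(g,\ga))+o(\eps^2),
\]
with $c_0,c_1\in\R$ constants, and the $o(\eps^2)$ is uniform on any compact subset of $\cg\times\cn$ thanks to the uniform $C^1$ bound \eqref{AM esti} applied to $\cm_c:=\iota(\ov U\times\cn)$, which is compact since $\ov U$ and $\cn$ are.

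Assume the first alternative, namely there exists $\de>0$ with
\[
\min_{\pa U}\hat\Phi|_{\vartheta^{-1}(\ga)}-\min_{\ov U}\hat\Phi|_{\vartheta^{-1}(\ga)}\ge\de\qquad\text{for all }\ga\in\cn.
\]
Writing $\tilde{\hat\Phi}(g,\ga):=\hat\Phi(\iota(g,\ga))$ and combining with the uniform expansion of $F_\eps$, there exists $\eps_0>0$ such that, for every $\ga\in\cn$ and every $|\eps|<\eps_0$,
\[
\min_{g\in\pa U}F_\eps(g,\ga)\ >\ \min_{g\in\ov U}F_\eps(g,\ga).
\]
In particular, the global minimum of $F_\eps$ on the compact set $\ov U\times\cn$, which is attained at some pair $(g_\eps,\ga_\eps)$, satisfies $g_\eps\in\intt(U)$.

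Since $\cn$ is a compact manifold (without boundary in the applications, e.g.\ the unit sphere $S^{2^{[m/2]+1}-1}(\mbs_m)$ in our setting), the minimum in the $\ga$-direction is an interior extremum of a smooth function on $\cn$, hence a critical point in that factor. Combined with $g_\eps$ being an interior extremum of $F_\eps(\cdot,\ga_\eps)$ on $U\subset\cg$, we conclude that $(g_\eps,\ga_\eps)$ is a critical point of $F_\eps$ on $\cg\times\cn$, so $z_\eps:=\iota(g_\eps,\ga_\eps)\in\cm_c$ is a critical point of $L_\eps^{red}$. The alternative max case is identical (look for a global maximum). Theorem \ref{AM Theorem} then yields that $u_\eps:=z_\eps+w_\eps(z_\eps)$ is a critical point of $L_\eps$ on $\ch$.

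The only delicate point is the uniform validity of the $o(\eps^2)$ remainder in \eqref{L-vr-expansion-deeper} over the non-compact manifold $\cm$: the argument works precisely because we restrict attention to the compact subset $\cm_c=\iota(\ov U\times\cn)$, on which \eqref{AM esti} holds uniformly and hence the Taylor expansion of $L_\eps(z+w_\eps(z))$ used in deriving \eqref{L-vr-expansion-deeper} is valid uniformly. The strict localization supplied by the hypothesis then dominates this remainder for $|\eps|$ small, producing the desired interior critical point.
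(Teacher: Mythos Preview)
Your argument is correct and is essentially the natural one. Note, however, that the paper does not give its own proof of this theorem: it simply cites \cite[Theorem 2.6]{Isboe-Xu21}. Your proof is precisely the expected direct argument---restrict to the compact set $\iota(\ov U\times\cn)$, use the uniform expansion \eqref{L-vr-expansion-deeper} to push the fibrewise minimum of $L_\eps^{red}$ into the interior of $U$ for each $\ga$, and then use that $\cn$ is closed so the global minimum on $\ov U\times\cn$ is automatically an interior critical point---followed by an appeal to Theorem~\ref{AM Theorem}. One minor remark: you should state explicitly that you are working under the standing hypothesis of Case~1 (which is part of the context in which the theorem is placed), since the theorem statement alone does not record that $\Ga$ is constant on $\cm$; you do this, but it is worth flagging that this is where the constancy of $c_0,c_1$ comes from.
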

%\begin{proof}
%It sufficient to consider the case where $U\subset\cg$ contains a strict local minimum of $\hat\Phi\big|_{\vartheta^{-1}(\ga)}$, for every $\ga\in\cn$, the other case can be done similarly. Let
%\[
%\de:=\inf_{\ga\in\cn}\Big(\min_{\pa U}\hat\Phi\big|_{\vartheta^{-1}(\ga)} -\min_{\ov U}\hat\Phi\big|_{\vartheta^{-1}(\ga)}\Big).
%\]
%By the expansion \eqref{L-vr-expansion-deeper}, we can see that
%\[
%\aligned
%L_\vr^{red}\circ\iota(g,\ga)-\min_{\ov U} L_\vr^{red}\circ\iota(\cdot,\ga)&=\vr^2\Big(\hat\Phi\big|_{\vartheta^{-1}(\ga)}(g) -\min_{\ov U}\hat\Phi\big|_{\vartheta^{-1}(\ga)}\Big)+o(\vr^2) \\
%&\geq \vr^2\de+o(\vr^2)>0,
%	\qquad \forall g\in\pa U
%\endaligned
%\]
%for all $\ga\in\cn$ and $|\vr|$ small. Since $\ov U\times\cn$ is compact, by minimizing the functional $L_\vr^{red}\circ\iota$ in $\ov U\times\cn$ and by the above inequality, the minimum is attained in the interior and we obtain a critical point of $L_\vr^{red}\circ\iota$, and equivalently a critical point of $L_\vr^{red}$ in $\iota(U\times\cn)\subset \cm$. Therefore, by Theorem \ref{AM Theorem}, it provides a critical point of $L_\vr$.
%\end{proof}

\subsubsection*{Case 2: $\ker\nabla(\Ga\circ\iota)(g,\ga)\neq T_{(g,\ga)}(\cg\times\cn)$ for some   $(g,\ga)\in\cg\times\cn$}

Clearly, in this case, $\tilde\Ga(g)=\Ga\circ\iota(g,\ga)\neq constant $ on $\cg\times\cn$ (evidently, $\Ga(z)\neq constant$ on $\cm$). And the existence result is as follows, we refer the reads to \cite[Theorem 2.4 and Remark 2.5]{Isboe-Xu21}.
	
\begin{Thm}\label{abstract result2}
	Let $L_0,\Ga\in C^2(\ch,\R)$ as in \eqref{model problem} and suppose that $(A1)$-$(A5)$ are satisfied. If $\tilde\Ga$ is a Morse function on $\cg$ and there is an open bounded subset $\Om\subset\cg$ such that the topological degree $\tdeg(\nabla\tilde\Ga,\Om,0)\neq0$. Then, for $|\eps|$ small, the functional $L_\eps^{red}$ has at least $\cat(\cn)$ critical points on $\cm$. In particular, for each critical point $\bar g$ of $\tilde\Ga$, there exist at least $\cat(\cn)$ critical points $(g(\ga),\ga)\in\cg\times\cn$ of $L_\eps^{red}$, each of which satisfies $g(\ga)=\bar g +o(1)$ as $\eps\to0$.
\end{Thm}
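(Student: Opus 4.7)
The plan is a two-stage Lyapunov--Schmidt reduction that exploits the trivializable fiber bundle structure $\iota:\cg\times\cn\to\cm$ from $(A4)$ together with the degeneracy hypothesis $(A5)$. After the standard reduction yields $L_\eps^{red}$ on $\cm$ with the expansion~\eqref{L approximate}, I would perform a second, finite-dimensional reduction along the fiber direction $\cg$. This eliminates the degeneracy by producing, for each $\ga\in\cn$, a unique solution $g_\eps(\ga)\in\cg$ of the $\cg$-component of $\nabla L_\eps^{red}=0$ near $\bar g$, and leaves a single $C^1$ function on the compact base $\cn$, to which Lyusternik--Schnirelmann category theory yields at least $\cat(\cn)$ critical points that lift back to critical points of $L_\eps^{red}$ on $\cm$, and hence of $L_\eps$ on $\ch$ by Theorem~\ref{AM Theorem}.

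Concretely, set $\tilde L_\eps^{red}:=L_\eps^{red}\circ\iota:\cg\times\cn\to\R$. Since $L_0$ is constant on each connected component of $\cm$ and $(A5)$ forces $\Ga\circ\iota(g,\ga)=\tilde\Ga(g)$ to depend only on $g$, the expansion~\eqref{L approximate} becomes $\tilde L_\eps^{red}(g,\ga)=c+\eps\tilde\Ga(g)+o(\eps)$ uniformly on compact sets, with an analogous expansion for $\nabla_g\tilde L_\eps^{red}$. The hypothesis $\tdeg(\nabla\tilde\Ga,\Om,0)\neq0$ guarantees the existence of a critical point $\bar g\in\Om$ of $\tilde\Ga$, which is moreover non-degenerate by the Morse assumption. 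Rescaling the equation $\nabla_g\tilde L_\eps^{red}(g,\ga)=0$ by $\eps$ and applying the implicit function theorem at $(\bar g,\ga)$ for each $\ga\in\cn$ then produces a unique $g_\eps(\ga)$ near $\bar g$ solving this equation, with $g_\eps(\ga)=\bar g+o(1)$ uniformly in $\ga$, thanks to compactness of $\cn$.

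Next define $\Psi_\eps:\cn\to\R$ by $\Psi_\eps(\ga):=\tilde L_\eps^{red}(g_\eps(\ga),\ga)$. Since $\cn$ is a compact manifold, Lyusternik--Schnirelmann category theory furnishes at least $\cat(\cn)$ critical points $\ga_1^*,\dots,\ga_{\cat(\cn)}^*$ of $\Psi_\eps$. At any such $\ga_j^*$ the chain rule gives
\[
0=d\Psi_\eps(\ga_j^*)=\nabla_\ga\tilde L_\eps^{red}\bigl(g_\eps(\ga_j^*),\ga_j^*\bigr)+\bigl[\nabla_g\tilde L_\eps^{red}\bigl(g_\eps(\ga_j^*),\ga_j^*\bigr)\bigr]\cdot dg_\eps(\ga_j^*),
\]
and the bracketed factor vanishes by construction of $g_\eps$. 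Hence $(g_\eps(\ga_j^*),\ga_j^*)$ is a genuine critical point of $\tilde L_\eps^{red}$ on $\cg\times\cn$, and via $\iota$ it yields a critical point of $L_\eps^{red}$ on $\cm$ that lies within $o(1)$ of $\iota(\bar g,\ga_j^*)$ as $\eps\to 0$, which is the claimed conclusion.

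The main obstacle is ensuring that the implicit function theorem in the fiber direction can be applied \emph{uniformly} in $\ga\in\cn$. This requires promoting the expansion~\eqref{L approximate} to a $C^1$ estimate on the map $\ga\mapsto\nabla_g\tilde L_\eps^{red}(\cdot,\ga)$ that is uniform over $\cn$, while the Hessian of $\tilde\Ga$ at $\bar g$ must be uniformly invertible as $\ga$ varies; both are supplied by the compactness of $\cn$ provided by $(A4)$, the Morse hypothesis, and the uniform-on-compacts gradient estimate~\eqref{AM esti}. One subtle point to verify is that $dg_\eps(\ga_j^*)$ need not be small as $\eps\to 0$, but this is harmless in the chain-rule calculation above because it is multiplied by a quantity that vanishes identically.
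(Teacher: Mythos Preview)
The paper does not give its own proof of this theorem; it simply refers the reader to \cite[Theorem 2.4 and Remark 2.5]{Isboe-Xu21}. Your two-stage reduction---first the standard Lyapunov--Schmidt reduction to $\cm$, then a second finite-dimensional reduction along the fiber $\cg$ using the Morse non-degeneracy of $\bar g$, followed by Lyusternik--Schnirelmann category on the compact base $\cn$---is exactly the natural strategy for this degenerate situation and is almost certainly what is carried out in the cited reference. Your chain-rule argument for lifting critical points of $\Psi_\eps$ back to $\tilde L_\eps^{red}$ is correct, and you have correctly identified the main technical point (uniform applicability of the implicit function theorem in $\ga$).

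One small caution worth making explicit: the implicit function theorem in the $g$-direction, as you have stated it, requires not only the $C^0$ expansion $\frac1\eps\nabla_g\tilde L_\eps^{red}(g,\ga)=\nabla\tilde\Ga(g)+o(1)$ but also that the $g$-Hessian $\frac1\eps\nabla_g^2\tilde L_\eps^{red}$ converges to $\nabla^2\tilde\Ga$, whereas \eqref{L approximate} and \eqref{AM esti} as written only record first-order information. In the $C^2$ framework of \cite{AM} this second-order control is available (it follows from differentiating the auxiliary equation once more, using that $L_0,\Ga\in C^2$), but you should state it rather than leave it implicit. Alternatively, one can bypass the uniqueness issue entirely by taking $g_\eps(\ga)$ to be a local minimizer or maximizer of $g\mapsto\tilde L_\eps^{red}(g,\ga)$ over a small closed ball around $\bar g$ (the sign of the Morse index of $\bar g$ dictates which), which is automatically interior for small $\eps$ and yields a continuous selection suitable for the Lyusternik--Schnirelmann argument without needing $C^1$ dependence.
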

Here $\cat(\cn)$ denotes the Lusternik-Schnierelman category of $\cn$, namely the smallest integer $k$ such that $\cn\subset \cup_{i=1}^k A_k$, where the sets $A_k$ are closed and contractible in $\cn$.	
	
\section{The non-compactness caused by the background metric}\label{sec:blow-up}

In this section, let us consider the Eq. \eqref{reduced Dirac problem 1}, where the background metric is given by \eqref{the metric g} and \eqref{metric form}. In this setting, our main purpose is to use our abstract result to prove Theorem \ref{main thm}. Here we emphasis that the functional $\cj_0$ in Lemma \ref{expansion J functional} plays the role of $L_0$ in our abstract settings.
	
\subsection{Some basic facts}

We first report some important properties of the functionals $\Ga$ and $\Phi$ in Lemma \ref{expansion J functional}, which have been shown in \cite{Isboe-Xu21}. Recall that since the critical manifold $\cm\subset\msd^{\frac12}(\R^m,\mbs(\R^m))$ for $\cj_0$ is given by \eqref{critical manifold}-\eqref{critical manifold explicit}, we have
\begin{Lem}\label{first properties}
Assume that we are in the hypotheses of Lemma \ref{expansion J functional}, 
for $\psi_{\lm,\xi,\ga}\in\cm$ with $\lm>0$, $\xi\in\R^m$ and $\ga\in S^{2^{[\frac m2]+1}-1}(\mbs_m)$, there hold
\[
\Ga(\psi_{\lm,\xi,\ga})\equiv0
\]
and
\[
\Phi(\psi_{\lm,\xi,\ga})=\frac{m^{m-1}\lm^m}{16}\int_{\R^m}\frac{ \tr(\tilde\ih^2)-(\tr\tilde\ih)^2}{\big( \lm^2+|x-\xi|^2 \big)^m} d\vol_{\ig_{\R^m}}.
\]  
Moreover
\begin{itemize}
	\item[$(1)$]
	$\displaystyle 
	\lim_{\lm\to0}	\Phi(\psi_{\lm,\xi,\ga})=C_0\big( \tr(\tilde\ih^2)-(\tr\tilde\ih)^2 \big)(\xi)$ for any $\xi\in\R^m$,
	where
	\[
	C_0=\frac{m^{m-1}}{16}\int_{\R^m}\frac{1}{\big( 1+|x|^2 \big)^m}d\vol_{\ig_{\R^m}};
	\]  
	
	\item[$(2)$] for all $v\in \cw_z:=T_z\cm^\bot$,
		\begin{eqnarray}\label{nabla Ga}
		\inp{\nabla\Ga(z)}{v}
%		&=&\real\int_{\R^m}\frac{\tr\tilde\ih}2\Big[ (D_{\ig_{\R^m}}z,v)_{\ig_{\R^m}}-|z|^{2^*-2}_{\ig_{\R^m}}(z,v)_{\ig_{\R^m}} \Big]d\vol_{\ig_{\R^m}} \nonumber \\[0.5em]
		&=& \frac14\real\int_{\R^m}(\nabla(\tr\tilde\ih)\cdot_{\ig_{\R^m}}z,v)_{\ig_{\R^m}}d\vol_{\ig_{\R^m}} \nonumber \\[0.5em]
		& & -\frac12\sum_i\real\int_{\R^m}\tilde\ih_{ii}(\pa_i\cdot_{\ig_{\R^m}}\nabla_{\pa_i} z,v)_{\ig_{\R^m}}d\vol_{\ig_{\R^m}}  \nonumber \\[0.5em]
		& &
		-\frac14\sum_i\real\int_{\R^m}\pa_i\tilde\ih_{ii}(\pa_i\cdot_{\ig_{\R^m}}z,v)_{\ig_{\R^m}}d\vol_{\ig_{\R^m}};
	\end{eqnarray}
	
	\item[$(3)$] 
	$\displaystyle 
	\lim_{\lm\to0}\inp{K_{\psi_{\lm,\xi,\ga}}(\nabla\Ga(\psi_{\lm,\xi,\ga}))}{\nabla\Ga(\psi_{\lm,\xi,\ga})}=C_1\big( \tr(\tilde\ih^2)-(\tr\tilde\ih)^2 \big)(\xi)$ for any  $\xi\in\R^m$,
	where
	\[
	C_1=\frac{m^{m-1}}{4}\int_{\R^m}\frac{|x|^2}{\big( 1+|x|^2 \big)^{m+1}}d\vol_{\ig_{\R^m}}
	\]  
	and $K_z$ stands for the inverse of $\nabla^2 \cj_0(z)$ restricted to $\cw_z:=T_z\cm^\bot\subset \msd^{\frac12}(\R^m,\mbs(\R^m))$.
\end{itemize}
\end{Lem}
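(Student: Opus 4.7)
My approach is direct computation, exploiting that $\psi_{\lm,\xi,\ga}$ solves $D_{\ig_{\R^m}}\psi_{\lm,\xi,\ga}=|\psi_{\lm,\xi,\ga}|^{2/(m-1)}\psi_{\lm,\xi,\ga}$. Hermitian-pairing with $\psi_{\lm,\xi,\ga}$ gives $\real(\psi_{\lm,\xi,\ga},D_{\ig_{\R^m}}\psi_{\lm,\xi,\ga})=|\psi_{\lm,\xi,\ga}|^{2^*}$ pointwise, so the bracket $\tfrac12(\psi,D\psi)-\tfrac1{2^*}|\psi|^{2^*}$ collapses to $\tfrac{1}{2m}|\psi_{\lm,\xi,\ga}|^{2^*}$ on $\cm$. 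The key refinement is the \emph{companion pointwise identity}
\[
\real\bigl(\pa_i\cdot_{\ig_{\R^m}}\nabla_{\pa_i}\psi_{\lm,\xi,\ga},\,\psi_{\lm,\xi,\ga}\bigr)=\tfrac{1}{m}\,|\psi_{\lm,\xi,\ga}|^{2^*},\qquad i=1,\dots,m,
\]
which I would verify directly from \eqref{critical manifold explicit}: writing $\psi=c(\lm-(x-\xi))\cdot_{\ig_{\R^m}}\ga$ one obtains $\pa_i\cdot_{\ig_{\R^m}}\nabla_{\pa_i}\psi=(\pa_i c)\,\pa_i\cdot_{\ig_{\R^m}}(\lm-(x-\xi))\cdot_{\ig_{\R^m}}\ga+c\,\ga$ (using $\pa_i\cdot_{\ig_{\R^m}}\pa_i=-1$), and when paired with $\psi$ the first piece is purely imaginary because $\pa_i\cdot_{\ig_{\R^m}}$ is skew-Hermitian, while the second contributes $c^2\lm=\tfrac{1}{m}|\psi|^{2^*}$, independent of $i$. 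Substituting into $\Ga$, the trace term from the bracket and the $\tilde\ih_{ii}$-summed term cancel exactly, yielding $\Ga(\psi_{\lm,\xi,\ga})\equiv0$; substituting into $\Phi$, the total coefficient of $|\psi|^{2^*}$ reduces algebraically to $\tfrac{1}{16m}\bigl(\tr(\tilde\ih^2)-(\tr\tilde\ih)^2\bigr)$, and combining with $|\psi_{\lm,\xi,\ga}|^{2^*}=m^m\lm^m(\lm^2+|x-\xi|^2)^{-m}$ produces the displayed integral representation.

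The limits in $(1)$ follow from the standard concentration substitution $y=(x-\xi)/\lm$, which converts the $\lm$-dependent weight into $(1+|y|^2)^{-m}$; by continuity of $\tilde\ih$ at $\xi$ and dominated convergence one obtains $C_0\bigl(\tr(\tilde\ih^2)-(\tr\tilde\ih)^2\bigr)(\xi)$ with the stated $C_0$. For item $(2)$ I would differentiate $\Ga$ at $z\in\cm$ in direction $v\in\cw_z$ and perform two integrations by parts. Differentiating the $(\psi,D\psi)$-block and applying self-adjointness of $D_{\ig_{\R^m}}$ together with the Leibniz rule $D_{\ig_{\R^m}}(w\psi)=\nabla w\cdot_{\ig_{\R^m}}\psi+wD_{\ig_{\R^m}}\psi$ for $w=\tfrac12\tr\tilde\ih$ yields $\real\int w(D\psi,v)+\tfrac12\real\int(\nabla w\cdot_{\ig_{\R^m}}\psi,v)$; the nonlinear piece contributes $-\real\int w|\psi|^{2^*-2}(\psi,v)$, which cancels the first term precisely because $D_{\ig_{\R^m}}\psi=|\psi|^{2^*-2}\psi$ on $\cm$; and transferring $\nabla_{\pa_i}$ off $v$ in the $\tilde\ih_{ii}$-block via skew-adjointness of $\pa_i\cdot_{\ig_{\R^m}}$ produces $-\tfrac12\sum_i\int\tilde\ih_{ii}\real(\pa_i\cdot\nabla_{\pa_i}\psi,v)-\tfrac14\sum_i\int\pa_i\tilde\ih_{ii}\real(\pa_i\cdot\psi,v)$. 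Assembling the surviving three contributions gives exactly \eqref{nabla Ga}.

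The most delicate step is $(3)$, where $K_z$ is the inverse of the linearized Dirac–Yamabe operator on the infinite-dimensional complement $\cw_z$. My plan is to exploit the conformal covariance of $D_{\ig_{\R^m}}$: the conformal action of $\cg=(0,\infty)\times\R^m$ on $\cm$ intertwines $\nabla^2\cj_0(\psi_{\lm,\xi,\ga})\big|_{\cw_{\psi_{\lm,\xi,\ga}}}$ with $\nabla^2\cj_0(\psi_{1,0,\ga})\big|_{\cw_{\psi_{1,0,\ga}}}$ through an explicit unitary, so $\inp{K_z\nabla\Ga(z)}{\nabla\Ga(z)}$ is reduced to a fixed quadratic form in the rescaled pullback $\zeta_\lm$ of $\nabla\Ga(\psi_{\lm,\xi,\ga})$. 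Inspecting \eqref{nabla Ga} term by term, the $\nabla(\tr\tilde\ih)$- and $\pa_i\tilde\ih_{ii}$-contributions each pick up an extra factor of $\lm$ after rescaling and vanish in the limit, whereas the middle term converges to $\zeta_\infty:=-\tfrac12\sum_i\tilde\ih_{ii}(\xi)\,\pa_i\cdot_{\ig_{\R^m}}\nabla_{\pa_i}\psi_{1,0,\ga}$. Computing $\inp{K_{\psi_{1,0,\ga}}\zeta_\infty}{\zeta_\infty}$ is then a finite linear-algebra problem, which I would resolve by decomposing $\cw_{\psi_{1,0,\ga}}$ under the residual $SO(m)$-symmetry preserved by the diagonality of $\tilde\ih$; the diagonal piece of $\tilde\ih(\xi)$ produces the combination $\tr(\tilde\ih^2)-(\tr\tilde\ih)^2$, and $C_1$ emerges from the radial integral $\int|y|^2(1+|y|^2)^{-m-1}\,dy$. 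The main obstacle is verifying that the off-diagonal components of $K_{\psi_{1,0,\ga}}$ acting on $\zeta_\infty$ contribute nothing, so that only the trace invariants of $\tilde\ih(\xi)$ survive, and pinning down $C_1$ exactly; either a careful Fourier decomposition of $\nabla^2\cj_0$ on $\msd^{1/2}(\R^m,\mbs(\R^m))$ or the explicit matrix calculations already performed in \cite{Isboe-Xu21} should deliver the identification.
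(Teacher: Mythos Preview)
The paper does not supply its own proof of this lemma; it simply quotes the result from \cite{Isboe-Xu21}. So there is no detailed argument in the paper to compare against. That said, your direct computations are correct and are presumably what \cite{Isboe-Xu21} does as well. The companion identity $\real(\pa_i\cdot_{\ig_{\R^m}}\nabla_{\pa_i}\psi_{\lm,\xi,\ga},\psi_{\lm,\xi,\ga})=\tfrac1m|\psi_{\lm,\xi,\ga}|^{2^*}$ is the crux: once you have it, the cancellation giving $\Ga|_\cm\equiv0$ is immediate, and the algebra for $\Phi$ collapses to $\tfrac{1}{16m}(\tr(\tilde\ih^2)-(\tr\tilde\ih)^2)|\psi|^{2^*}$, matching the stated formula after inserting $|\psi_{\lm,\xi,\ga}|^{2^*}=m^m\lm^m(\lm^2+|x-\xi|^2)^{-m}$. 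Your derivation of \eqref{nabla Ga} via the Leibniz rule for $D_{\ig_{\R^m}}$ and integration by parts on the $\tilde\ih_{ii}$-block is also correct.

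For item $(3)$, your rescaling argument is on the right track: the isometry $U_\lm\psi(\cdot)=\lm^{(m-1)/2}\psi(\lm\cdot+\xi)$ intertwines $K_{\psi_{\lm,\xi,\ga}}$ with $K_{\psi_{1,0,\ga}}$, and a change of variables confirms that the first and third terms of \eqref{nabla Ga} acquire an extra factor $\lm$ while the middle term converges to the functional $\zeta_\infty$ you write down. What remains --- evaluating $\inp{K_{\psi_{1,0,\ga}}\zeta_\infty}{\zeta_\infty}$ and showing that only the $SO(m)$-invariant combination $\tr(\tilde\ih^2)-(\tr\tilde\ih)^2$ survives with the exact constant $C_1$ --- is genuinely the hard part, and you acknowledge it by deferring to \cite{Isboe-Xu21}. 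That is precisely what the paper itself does, so at this point your argument is at parity with the paper's treatment.
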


\begin{Rem}\label{C0C1}
Let us point out that the aforementioned two constants $C_0$ and $C_1$ can be computed explicitly as
\[
C_0=\frac{m^{m-1}\om_{m-1}}{16}\int_0^{\infty}\frac{r^{m-1}}{(1+r^2)^m}dr=\frac{m^{m-1}\om_{m-1}}{32}B\Big(\frac m2,\frac m2\Big)
\]
and
\[
C_1=\frac{m^{m-1}\om_{m-1}}{4}\int_0^{\infty}\frac{r^{m+1}}{(1+r^2)^{m+1}}dr=\frac{m^{m-1}\om_{m-1}}{8}B\Big(\frac m2,\frac m2+1\Big)
\]
where $B(x,y)$, defined for $x,y>0$, is the beta function classified by the first kind of Euler's integral. Using the property
\[
B(x,x+1)=\frac12B(x,x), \quad \text{for } x>0
\]
we find $C_0=\frac12 C_1$.
\end{Rem}

Next, let $\cj_\vr^\ih$ be the Euler functional corresponding to the metric $\tilde\ig^{\ih}=\ig_{\R^m}+\vr\ih$, where $\ih$ is a fixed elementary matrix (see Definition \ref{def k-elementary}). Then Lemma \ref{first properties} can be performed also for $\cj_\vr^\ih$. Let $\Ga^\ih$ and $\Phi^\ih$ be the corresponding functionals appearing in the expansion of $\cj_\vr^\ih$. A more detailed characterization for the reorganized functional  
\begin{\equ}\label{the reorganized functional}
\hat\Phi^\ih(z):=\Phi^\ih(z)-\frac12\inp{K_z(\nabla\Ga^\ih(z))}{\nabla\Ga^\ih(z)} \quad \text{for } z\in\cm.
\end{\equ}
can be summarized in the following proposition. We emphasize that, by Lemma \ref{first properties} and Remark \ref{C0C1}, there holds
\[
\lim_{\lm\to0}\hat\Phi^\ih(\psi_{\lm,\xi,\ga})=0
\]
for any $\xi\in\R^m$ and $\ga\in S^{2^{[\frac m2]+1}-1}(\mbs_m)$.

\begin{Prop}\label{prop hat-Phi-ih}
	For $m\geq4$, assume that we are in the hypotheses of Lemma \ref{expansion J functional}. Let $k\in\{1,\dots,m\}$, $p\in[2,\infty]$ and $\ih=\diag(\ih_{11},\dots,\ih_{mm})$ be  $(k,p)$-elementary at a point $\xi\in\R^m$ with $\pa_k\ih_{ii}(\xi)\equiv c_k\neq0$, for $i\neq k$. If
	\[
	\begin{cases}
		p=\infty & m=4,\\
		p>2 & m=5,\\
		p\geq2 & m\geq6,
	\end{cases}
	\]
	then 
	\[
	\hat\Phi^\ih(\psi_{\lm,\xi,\ga})=-\frac{3m^{m-2}(m-1)(m-2)c_k^2}{128} \lm^2\int_{\R^m}\frac{|x|^2}{(1+|x|^2)^m}d\vol_{\ig_{\R^m}}+o(\lm^2) \quad \text{as }\lm\to0.
	\]
	In particular, $\hat\Phi^\ih(\psi_{\lm,\xi,\ga})<0$ for small values of $\lm$. Furthermore,
	\[
	\hat\Phi^\ih(\psi_{\lm,\xi,\ga})\to0 \quad \text{as } \lm+|\xi|\to\infty.
	\]
\end{Prop}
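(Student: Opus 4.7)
The strategy is to expand both summands of $\hat\Phi^\ih = \Phi^\ih - \tfrac12\inp{K_z\nabla\Ga^\ih(z)}{\nabla\Ga^\ih(z)}$ at $z = \psi_{\lm,\xi,\ga}$ in powers of $\lm$ as $\lm\to 0$, using the explicit integrals of Lemma~\ref{first properties} together with the local expansion furnished by the $(k,p)$-elementary structure of $\ih$: near $\xi$ one has $\ih_{kk}\equiv 0$ and, for $i\neq k$,
\[
\ih_{ii}(\xi+\lm y) = a_i + \lm(c_iy_i + c_ky_k) + o(\lm^p|y|^p), \qquad a_i := \ih_{ii}(\xi).
\]
For the first summand, the change of variables $y = (x-\xi)/\lm$ turns Lemma~\ref{first properties}(1) into
\[
\Phi^\ih(\psi_{\lm,\xi,\ga}) = \frac{m^{m-1}}{16}\int_{\R^m}\frac{F(\xi+\lm y)}{(1+|y|^2)^m}\,dy, \qquad F := \tr(\ih^2) - (\tr\ih)^2,
\]
whose $\lm^0$-contribution is $C_0 F(\xi)$ and whose $\lm^1$-contribution vanishes by the parity $y\mapsto -y$. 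Writing out the $\lm^2$-coefficient of $F$, the cross-terms $y_iy_j$ with $i\neq j$ integrate to zero, while the diagonal $c_i^2y_i^2$ contributions from $\sum_{i\neq k}(c_iy_i+c_ky_k)^2$ cancel exactly with those from $\bigl(\sum_{i\neq k}(c_iy_i+c_ky_k)\bigr)^2$; what survives is the $y_k^2$ coefficient $(m-1)c_k^2 - (m-1)^2 c_k^2 = -(m-1)(m-2)c_k^2$. Since $\int y_k^2(1+|y|^2)^{-m}dy = I/m$ with $I := \int_{\R^m}|y|^2(1+|y|^2)^{-m}dy$, one gets the $\lm^2$ contribution $-\tfrac{m^{m-2}(m-1)(m-2)c_k^2}{16}\lm^2 I$ to $\Phi^\ih$.

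For the second summand, I would use Lemma~\ref{first properties}(2) to express $\nabla\Ga^\ih(\psi_{\lm,\xi,\ga})$ as the sum of three pieces coming from $\nabla(\tr\ih)$, from $\ih_{ii}\,\pa_i\cdot_{\ig_{\R^m}}\nabla_{\pa_i}\psi$, and from $\pa_i\ih_{ii}\,\pa_i\cdot_{\ig_{\R^m}}\psi$. After rescaling and Taylor-expanding one obtains $\nabla\Ga^\ih(\psi_{\lm,\xi,\ga}) = U_0 + \lm U_1 + O(\lm^2)$ as elements of $\msd^{-1/2}(\R^m,\mbs(\R^m))$. To then compute $\inp{K_z\nabla\Ga^\ih}{\nabla\Ga^\ih}$ to order $\lm^2$ I would exploit the conformal covariance of $D_{\ig_{\R^m}}$: the critical manifold $\cm$ is a conformal orbit of $\psi_{1,0,\ga}$, so that $\nabla^2\cj_0(z)$ is unitarily equivalent (up to conformal rescaling) to $\nabla^2\cj_0(\psi_{1,0,\ga})$, whose action on $T_{\psi_{1,0,\ga}}\cm^\bot$ admits a spectral description via the eigendecomposition of $D_{\ig_{S^m}}$ under the stereographic identification. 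Alternatively one may use the variational characterization $\inp{K_z u}{u} = \max_{v\in\cw_z}\{2\real\inp{u}{v} - \nabla^2\cj_0(z)[v,v]\}$ with an explicit test Ansatz $v$. The leading $\lm^0$ order then reproduces $C_1 F(\xi)$ in agreement with Lemma~\ref{first properties}(3); the $\lm^1$ order vanishes by parity; and the $\lm^2$-correction must come out to $-\tfrac{5m^{m-2}(m-1)(m-2)c_k^2}{64}\lm^2 I$.

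Combining, the $\lm^0$-parts cancel thanks to $C_0 = C_1/2$ (Remark~\ref{C0C1}), the $\lm^1$-parts vanish by parity, and the $\lm^2$-parts yield
\[
\Bigl(-\tfrac{1}{16} + \tfrac{5}{128}\Bigr)\, m^{m-2}(m-1)(m-2)c_k^2\,\lm^2 I \;=\; -\tfrac{3m^{m-2}(m-1)(m-2)c_k^2}{128}\,\lm^2 I,
\]
as claimed. The dimension-dependent condition on $p$ enters precisely in bounding the remainder $o(\lm^p|y|^p)$ from the elementary expansion after it is integrated against weights of the form $(1+|y|^2)^{-m}$ (possibly multiplied by bounded polynomials produced by derivatives of $\psi_{\lm,\xi,\ga}$): in low dimensions these weights decay more slowly so one needs higher pointwise regularity of $\ih$ at $\xi$ to guarantee an $o(\lm^2)$ contribution. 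The strict negativity $\hat\Phi^\ih(\psi_{\lm,\xi,\ga}) < 0$ for small $\lm$ then follows from $c_k\neq 0$ and $m\geq 4$. Finally, the decay $\hat\Phi^\ih\to 0$ as $\lm + |\xi|\to\infty$ uses compactness of $\supp\ih$: when $|\xi|\to\infty$ with $\lm$ bounded, $\ih(\xi+\lm y)$ vanishes on every fixed ball in $y$ and the polynomial decay of $(1+|y|^2)^{-m}$ suppresses the tail; when $\lm\to\infty$, the support of $\ih(\xi+\lm\,\cdot)$ in $y$-coordinates has vanishing Lebesgue measure.

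The main obstacle is extracting the exact $\lm^2$-coefficient $-\tfrac{5}{64}$ in the expansion of $\inp{K_z\nabla\Ga^\ih}{\nabla\Ga^\ih}$. Since $K_z$ is only implicitly defined as the inverse of the restricted linearized operator on $\cw_z$ and has no closed-form kernel, producing the precise numerical constant (rather than merely a bound) requires either a careful spectral calculation against the eigendecomposition of the Dirac operator, or a delicate variational Ansatz — both of which demand substantial algebraic bookkeeping and are naturally deferred to the appendix.
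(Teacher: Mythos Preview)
Your expansion of $\Phi^\ih(\psi_{\lm,\xi,\ga})$ is correct and matches the structure of the argument: the rescaling $y=(x-\xi)/\lm$, the parity cancellation at order $\lm^1$, and the computation of the surviving $y_k^2$--coefficient $-(m-1)(m-2)c_k^2$ all go through as you describe. Note, however, that the paper does not actually prove this proposition here; it only records (in the remark following the statement) that the proof is carried out in \cite[Section 4.2]{Isboe-Xu21}, and indicates the key idea.

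There is a genuine gap in your treatment of the quadratic form $\inp{K_z\nabla\Ga^\ih(z)}{\nabla\Ga^\ih(z)}$. Your proposed variational characterization
\[
\inp{K_z u}{u} = \max_{v\in\cw_z}\bigl\{2\real\inp{u}{v} - \nabla^2\cj_0(z)[v,v]\bigr\}
\]
is incorrect in this setting: it would hold if $\nabla^2\cj_0(z)$ were positive definite on $\cw_z$, but the functional $\cj_0$ is strongly indefinite (the Dirac operator has spectrum unbounded in both directions), so $\nabla^2\cj_0(z)|_{\cw_z}$ is an indefinite operator and no such extremal formula is available. A test Ansatz therefore cannot produce the exact value of $\inp{K_z u}{u}$, only one-sided bounds of the wrong kind. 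Your alternative spectral suggestion is also not a workable plan as stated: conformal covariance does conjugate the Hessians at different points of $\cm$, but $\nabla^2\cj_0(z)$ is not simply (a conjugate of) $D_{\ig_{S^m}}$ --- it contains the second variation of the nonlinear term as well --- and there is no ready eigendecomposition to invert.

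The approach that the paper singles out (and that \cite{Isboe-Xu21} carries out) is different: one \emph{solves} the linear equation $\nabla^2\cj_0(z)[w]=\nabla\Ga^\ih(z)$ for $w\in\cw_z$ explicitly, i.e.\ one constructs $K_z(\nabla\Ga^\ih(z))$ directly rather than trying to evaluate the quadratic form abstractly. This is exactly where the $(k,p)$--elementary hypothesis is used: the very special linear-in-coordinates structure of $\ih$ near $\xi$ (with $\ih_{kk}\equiv 0$ there and the same slope $c_k$ in the $k$-th direction for all other $\ih_{ii}$) forces $\nabla\Ga^\ih(z)$, via the formula in Lemma~\ref{first properties}(2), into a form for which an explicit spinorial Ansatz for $w$ can be written down and checked. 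Once $w$ is known, $\inp{K_z\nabla\Ga^\ih(z)}{\nabla\Ga^\ih(z)}=\inp{\nabla\Ga^\ih(z)}{w}$ is a concrete integral that can be expanded in $\lm$ by the same rescaling you used for $\Phi^\ih$. Your proposal does not contain this step, and without it the coefficient $-5/64$ that you reverse-engineered from the final answer is not derived.
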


\begin{Rem}
The proof of Proposition \ref{prop hat-Phi-ih} is very technical, we refer to \cite[Section 4.2]{Isboe-Xu21} for more details. We only point out here that the main ingredient lies in characterizing the mapping $w_\vr^{\ih}:\cm\to T\cm^\bot$, $w_\vr^{\ih}(z)=K_z(\nabla\Ga^\ih(z))$, or equivalently solving the equation $\nabla^2\cj_0(z)[w_\vr^\ih(z)]=\nabla\Ga(z)$ for $z\in\cm$. And the $(k,p)$-elementary matrix makes the computation more accessible than using general choices of $\ih$.
\end{Rem}

\medskip

Now, through the perturbation framework introduced in Section \ref{sec:abstract results}, we intend to reduce the problem \eqref{SY-m-sphere} to a finite-dimensional one. Notice that, when $\tilde\ih\in\ch(p)$ (see the definition above Theorem \ref{main thm}), we find the functionals $\Ga$ and $\Phi$ (in the expansion of $\cj_\vr$, see Lemma \ref{expansion J functional}) are actually in the form of summing up infinitely many distinguished terms. For this reason more careful analysis is required. 

\begin{Lem}\label{lem:inequalities}
For $m\geq2$, assume that we are in the hypotheses of Lemma \ref{expansion J functional} with $\tilde\ih\in\ch(p)$, some $p\in(2,\infty)\cup\{\infty\}$. Let $\psi,\va\in\msd^{\frac12}(\R^m,\mbs(\R^m))$ and $z\in\cm$. Then, via the Bourguignon-Gauduchon identification \eqref{spinor identify}, there exists a constant $C>0$ such that the following estimates hold for all $|\vr|$ small:
\begin{\equ}\label{inequ1}
	\cj(\tilde\psi)-\cj_0(\psi)-\vr\Ga(\psi)-\vr^2\Phi(\psi)=o(\vr^2)\big( \|\psi\|^2+\|\psi\|^{\frac{2m}{m-1}} \big);
\end{\equ}
\begin{\equ}\label{inequ2}
	\big\|\nabla\cj_\vr(\tilde\psi)-\nabla\cj_0(\psi)-\vr\nabla\Ga(\psi)\big\|=O(\vr^2)\big( \|\psi\|+\|\psi\|^{\frac{m+1}{m-1}} \big);
\end{\equ}
\begin{\equ}\label{inequ3}
\big\|\nabla \cj_\vr(\tilde z)\big\|=O(|\vr|);
\end{\equ}
\begin{\equ}\label{inequ4}
	\big\|\nabla^2\cj_\vr(\tilde\psi)-\nabla^2\cj_0(\psi)\big\|=O(|\vr|)\big( 1+\|\psi\|^{\frac2{m-1}} \big);
\end{\equ}
\begin{\equ}\label{inequ5}
	|\cj_\vr(\tilde\psi+\tilde\va)-\cj_\vr(\tilde\psi)|\leq C\|\va\|\big( \|\psi\|+\|\va\|+\|\psi\|^{\frac{m+1}{m-1}}+\|\va\|^{\frac{m+1}{m-1}} \big);
\end{\equ}
\begin{\equ}\label{inequ6}
	\big\|\nabla\cj_\vr(\tilde\psi+\tilde\va)-\nabla\cj_\vr(\tilde\psi)\big\|\leq C\|\va\|\big( 1+\|\psi\|^{\frac2{m-1}}+\|\va\|^{\frac2{m-1}} \big);
\end{\equ}
\begin{\equ}\label{inequ7}
	\big\|\nabla\Ga(\tilde\psi+\tilde\va)-\nabla\Ga(\tilde\psi)\big\|\leq C\|\va\|\big( 1+\|\psi\|^{\frac2{m-1}}+\|\va\|^{\frac2{m-1}} \big);
\end{\equ}
\begin{\equ}\label{inequ8}
	\big\|\nabla^2\cj_\vr(\tilde\psi+\tilde\va)-\nabla^2\cj_\vr(\tilde\psi)\big\|\leq \begin{cases}
	 C\|\va\|(\|\psi\|+\|\va\|) & m=2, \\[0.5em]
	C\big(\|\va\|^{\frac2{m-1}}+\|\va\|\big)& m\geq3,
	\end{cases}
\end{\equ}
uniformly in $\psi$, $\va$ and $z$.
\end{Lem}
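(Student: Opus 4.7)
\bigskip

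\noindent\textbf{Proof proposal.} The plan is to establish all eight estimates by a systematic Taylor expansion of $\cj_\vr$ around $\vr=0$ in the Bourguignon-Gauduchon framework, controlling every remainder term by $\msd^{1/2}$-norms of the input spinors. The crucial tool is the identification \eqref{spinor identify}-\eqref{Dirac identify} together with the matrix expansions \eqref{expansion det G}-\eqref{expansion B inverse}, which turn $D_{\tilde\ig_\vr}$ and $d\vol_{\tilde\ig_\vr}$ into their Euclidean counterparts plus explicit $\vr$-polynomial perturbations driven by $\tilde\ih$. Since $\tilde\ih\in\ch(p)$ is a locally finite sum $\sum a_i\ih(\cdot-x_i)$ with well-separated supports and $\ell^\tau$-summable coefficients, the $L^\infty$ norms of $\tilde\ih$, $\tilde\ih^2$, $\nabla\tilde\ih$ are uniformly controlled by $\sup_i|a_i|\cdot\|\ih\|_{C^1}$; this will be used repeatedly to bound error terms uniformly in $\vr$.

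First I would prove \eqref{inequ1} by writing
\[
\cj_\vr(\tilde\psi)=\frac12\int_{\R^m}(\tilde\psi,D_{\tilde\ig_\vr}\tilde\psi)_{\tilde\ig_\vr}\sqrt{\det\tilde G_\vr}\,dx-\frac1{2^*}\int_{\R^m}|\tilde\psi|_{\tilde\ig_\vr}^{2^*}\sqrt{\det\tilde G_\vr}\,dx,
\]
substituting the expansions of $\sqrt{\det\tilde G_\vr}$, $B_\vr$, $B_\vr^{-1}$, $W$, $X$ and $\tilde\Gamma_{ij}^k$ up to order $\vr^2$, and grouping terms. The zeroth order reproduces $\cj_0(\psi)$, the first order reproduces $\Gamma(\psi)$, the second reproduces $\Phi(\psi)$ (this is exactly the computation of Lemma \ref{expansion J functional} carried to one further order, and is recorded in Appendix \ref{proof of lemma 4.5}). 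The $o(\vr^2)$ remainder collects cubic-and-higher combinations of $\tilde\ih$, whose quadratic part in $\psi$ is estimated by Cauchy-Schwarz and the bounded embedding $\msd^{1/2}\hookrightarrow L^2_{\mathrm{loc}}$ (or Sobolev $\msd^{1/2}\hookrightarrow L^{2^*}$ applied via H\"older), while the part involving $|\tilde\psi|_{\tilde\ig_\vr}^{2^*}$ is controlled by $\|\psi\|^{2^*}$ by writing $|\tilde\psi|_{\tilde\ig_\vr}^{2^*}-|\psi|_{\ig_{\R^m}}^{2^*}=O(\vr)|\psi|^{2^*}$. Crucially, because the supports of the $\ih(\cdot-x_i)$ are pairwise disjoint, $\|\tilde\ih\|_{L^\infty}=\sup_i|a_i|\|\ih\|_{L^\infty}$ is finite, so the remainder is genuinely $o(\vr^2)$ uniformly in $\psi$ modulo the factor $\|\psi\|^2+\|\psi\|^{2^*}$.

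Estimates \eqref{inequ2} and \eqref{inequ4} are proved by differentiating the above expansion once, resp.\ twice, with respect to $\tilde\psi$, using that differentiation commutes with the Bourguignon-Gauduchon identification and that $|\psi|^{2/(m-1)}$ is the critical power appearing in $\nabla\cj_0$ (H\"older with exponents $\frac{2m}{m+1}$ and $2^*$ yields the factors $\|\psi\|^{(m+1)/(m-1)}$ and $\|\psi\|^{2/(m-1)}$). Estimate \eqref{inequ3} is an immediate consequence of \eqref{inequ2} together with $\nabla\cj_0(z)=0$ on $\cm$ and the explicit boundedness of $\|\Gamma'(z)\|$ for $z\in\cm$ (the profiles $\psi_{\lm,\xi,\ga}$ have uniformly bounded $\msd^{1/2}$-norm by conformal invariance). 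The Lipschitz-type bounds \eqref{inequ5}-\eqref{inequ8} come from the mean-value theorem applied to $t\mapsto \cj_\vr(\tilde\psi+t\tilde\va)$ and $t\mapsto\nabla\cj_\vr(\tilde\psi+t\tilde\va)$, where the only nontrivial point is that the nonlinearity $u\mapsto |u|^{2/(m-1)}u$ has H\"older exponent $\min(1,2/(m-1))$ in its derivative, which explains the dichotomy $m=2$ versus $m\geq3$ in \eqref{inequ8}.

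The main obstacle will be obtaining the uniform $O(\vr^2)$ and $O(\vr)$ controls in \eqref{inequ1}-\eqref{inequ8} despite $\tilde\ih$ being an infinite sum: one needs to verify that the $L^\infty$-norms of $\tilde\ih$, $\nabla\tilde\ih$, and of the connection correction terms $W,X$ that arise in \eqref{Dirac identify} are uniformly bounded independently of the number of bumps, and that the pointwise quadratic-in-$\psi$ errors integrate against compactly supported $\tilde\ih_{ii}$ to give a global bound by $\|\psi\|^2$. This is where the separation condition $|x_i-x_j|>4\,\diam(\supp\ih)$ in the definition of $\ch(p)$ is essential: it lets one bound $\tilde\ih^k$ pointwise by a single summand $|a_i|^k\|\ih\|_\infty^k$ rather than by a genuine series. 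Because the calculations are routine but long, I would defer them to Appendix \ref{proof of lemma 4.5}, where the proof of the analogous Lemma 4.3 of \cite{Isboe-Xu21} already contains the heart of the argument, and indicate only the modifications needed to pass from a single elementary matrix to an element of $\ch(p)$.
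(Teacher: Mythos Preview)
Your proposal is correct and follows essentially the same route as the paper's proof in Appendix~\ref{proof of lemma 4.5}: expand $\cj_\vr$, $\nabla\cj_\vr$, $\nabla^2\cj_\vr$ via the Bourguignon--Gauduchon identification and Lemma~\ref{basic expansions}, use the disjoint-support condition in $\ch(p)$ to control $\|\tilde\ih\|_{L^\infty}$ uniformly, and handle \eqref{inequ5}--\eqref{inequ8} by the mean-value theorem together with the H\"older modulus of the derivative of $u\mapsto|u|^{2^*-2}u$. The only point where the paper is more explicit than your sketch is \eqref{inequ8} for $m\geq4$, where the term $|\psi|^{2^*-4}$ carries a negative exponent and is dealt with by a domain decomposition $\Om=\{|\tilde\psi|<2|\tilde\psi+\tilde\va|\}$ rather than a direct H\"older bound; this is entirely consistent with your plan.
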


Without breaking the reading, the proof of Lemma \ref{lem:inequalities} will be given in  Appendix \ref{proof of lemma 4.5}. Now, as an important consequence, we have

\begin{Prop}\label{w-chi}
For $m\geq2$, assume that we are in the hypotheses of Lemma \ref{expansion J functional} with $\tilde\ih\in\ch(p)$, some $p\in(2,\infty)\cup\{\infty\}$. There exists a $C^1$ mapping
\[
(w,\chi):(-\vr_0,\vr_0)\times\cm\to \msd^{\frac12}(\R^m,\mbs(\R^m))\times \msd^{\frac12}(\R^m,\mbs(\R^m))
\]
for some $\vr_0>0$, which satisfies
\begin{itemize}
	\item[$(1)$] $w_\vr(z)=w(\vr,z)\in T_z\cm^\bot$;
	
	\item[$(2)$] $\nabla\cj_\vr\big(\tilde z + \widetilde{w_\vr(z)}\big)=\chi(\vr,z)\in T_z\cm$ for all $z\in\cm$ (via the Bourguignon-Gauduchon identification);
	
	\item[$(3)$] $w_\vr(z)=-\vr K_z(\nabla\Ga(z))+O(|\vr|^\mu)$ with $\mu=2$ for $m=2$ and $\mu=\frac{m+1}{m-1}$ for $\mu\geq3$;
	
	\item[$(4)$] denoted by $\cj_\vr^{red}(z)=\cj_\vr\big(\tilde z + \widetilde{w_\vr(z)}\big)$, then $\tilde z+\widetilde{w_\vr(z)}$ is a critical point of $\cj_\vr$ provided that $z\in\cm$ is a critical point of $\cj_\vr^{red}$.
\end{itemize}
\end{Prop}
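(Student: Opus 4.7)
The strategy is a classical Lyapunov-Schmidt reduction, adapted to the non-compactness of $\cm$. I plan to solve the auxiliary equation
\[
F(\vr, z, w) := P_z \nabla\cj_\vr\big(\tilde z + \tilde w\big) = 0, \qquad w \in T_z\cm^\perp,
\]
via an implicit function theorem uniform in $z \in \cm$, where $P_z : \msd^{\frac12}(\R^m, \mbs(\R^m)) \to T_z\cm^\perp$ is the orthogonal projection. First, $F(0, z, 0) = P_z \nabla\cj_0(z) = 0$ since $z \in \cm$, and by $(A1)$--$(A3)$ the partial derivative $D_wF(0, z, 0) = P_z \nabla^2\cj_0(z)|_{T_z\cm^\perp}$ is a linear isomorphism with bounded inverse $K_z$. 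A key point is that $\|K_z\|$ is uniformly bounded in $z \in \cm$: the conformal rescalings, translations, and the $S^{2^{[m/2]+1}-1}$-action by unit spinors parameterizing $\cm$ preserve $\cj_0$ and act as isometries of $\msd^{\frac12}$, so $\|z\|$ is constant on $\cm$ and the uniform-in-$\psi$ estimates \eqref{inequ1}--\eqref{inequ8} become uniform in $z$.

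Next I rewrite the auxiliary equation as a fixed-point problem $w = \Psi_{\vr,z}(w)$, where
\[
\Psi_{\vr,z}(w) := -K_z P_z \Big( \nabla\cj_\vr(\tilde z) + \big(\nabla^2\cj_\vr(\tilde z) - \nabla^2\cj_0(z)\big)[w] + \int_0^1 \big(\nabla^2\cj_\vr(\tilde z + s\tilde w) - \nabla^2\cj_\vr(\tilde z)\big)[w]\, ds \Big).
\]
By \eqref{inequ2}, $\|\nabla\cj_\vr(\tilde z)\| = O(|\vr|)$; by \eqref{inequ4} and \eqref{inequ8}, the bracketed remainder is bounded by $C(|\vr|\|w\| + \|w\|^{1+2/(m-1)} + \|w\|^2)$ for $m \geq 3$ and by $C(|\vr|\|w\| + \|w\|^2)$ for $m = 2$. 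Hence for $|\vr|$ small and $\|w\| \leq c_0|\vr|$ with $c_0$ large enough, $\Psi_{\vr,z}$ maps this ball into itself as a contraction, uniformly in $z \in \cm$. The Banach fixed-point theorem produces a unique $w_\vr(z) \in T_z\cm^\perp$ with $\|w_\vr(z)\| = O(|\vr|)$; $C^1$ regularity in $(\vr, z)$ then follows from the $C^1$-dependence of $F$ and the standard implicit function machinery applied through $D_w F$.

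To obtain the expansion in (3) I iterate the fixed-point identity once: substituting $\nabla\cj_\vr(\tilde z) = \vr \nabla\Ga(z) + O(\vr^2)$ (from \eqref{inequ2}) and $\|w_\vr(z)\| = O(|\vr|)$ into the right-hand side of $w_\vr(z) = \Psi_{\vr,z}(w_\vr(z))$ yields $w_\vr(z) = -\vr K_z(\nabla\Ga(z)) + O(|\vr|^\mu)$, with $\mu = \frac{m+1}{m-1}$ for $m \geq 3$ dictated by the nonlinear contribution $\|w\|^{1+2/(m-1)}$ from the Hessian remainder, and $\mu = 2$ for $m = 2$ from the genuinely quadratic estimate in \eqref{inequ8}. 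Statement (2) is immediate: the construction places $\nabla\cj_\vr(\tilde z + \widetilde{w_\vr(z)})$ in the orthogonal complement of $T_z\cm^\perp$, namely $T_z\cm$, and this is the element I name $\chi(\vr, z)$.

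For (4), if $z_*$ is a critical point of $\cj_\vr^{red}$, then the chain rule yields $\langle \nabla\cj_\vr(\tilde z_* + \widetilde{w_\vr(z_*)}),\, A[v]\rangle = 0$ for every $v \in T_{z_*}\cm$, where $A[v] := v + dw_\vr(z_*)[v]$. Since $dw_\vr(z_*) = O(|\vr|)$, the map $A : T_{z_*}\cm \to \msd^{\frac12}$ is a small perturbation of the canonical inclusion, and in particular its image is transverse to $T_{z_*}\cm^\perp$. Combined with the auxiliary equation, which says $\nabla\cj_\vr(\tilde z_* + \widetilde{w_\vr(z_*)}) \in T_{z_*}\cm$ is orthogonal to $T_{z_*}\cm^\perp$, this forces $\chi(\vr, z_*) = 0$. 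The main obstacle is precisely the uniform solvability over the \emph{non-compact} critical manifold $\cm$: here the perturbation $\tilde\ih = \sum_i a_i \ih(\cdot - x_i) \in \ch(p)$ concentrates at centers $x_i$ escaping to infinity and cannot be localized to any compact $\cm_c \subset \cm$ as in the standard Ambrosetti--Malchiodi framework. The transitive action of the conformal group on $\cm$, together with the globally-in-$\psi$ estimates of Lemma \ref{lem:inequalities}, are precisely what make the uniform contraction argument go through.
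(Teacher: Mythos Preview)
Your proposal is correct and follows essentially the same route as the paper: a uniform-in-$z$ contraction argument based on the estimates of Lemma~\ref{lem:inequalities}, followed by one iteration of the fixed-point identity to extract the expansion in (3). The only cosmetic difference is that the paper builds $\chi$ into the implicit-function setup from the start, solving $H(z,\vr,w,\chi)=\big(\nabla\cj_\vr(\tilde z+\tilde w)-\chi,\,(I-P_z)w\big)=0$ for the pair $(w,\chi)$ simultaneously, whereas you project onto $T_z\cm^\perp$, solve for $w$, and then \emph{define} $\chi$ as the residual tangential part; both lead to the same contraction map and the same estimates, and your chain-rule argument for (4) spells out what the paper leaves as ``direct consequence of (1) and (2)''.
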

\begin{proof}
To obtain the existence of $(w,\chi)$, let us define a mapping $H:\cm\times (-\vr_1,\vr_1)\times \msd^{\frac12}(\R^m,\mbs(\R^m))\times T\cm\to \msd^{\frac12}(\R^m,\mbs(\R^m))\times T\cm$
\[
H(z,\vr,w,\chi)=\big( \nabla\cj_\vr(\tilde z+\tilde w)-\chi,(I-P_z)w \big)
\]
where $(-\vr_1,\vr_1)$ is an interval such that $\cj_\vr$ is well-defined and $P_z:\msd^{\frac12}(\R^m,\mbs(\R^m))\to T_z\cm^\bot$ is the orthogonal projection onto $T_z\cm^\bot$.

Plainly, we have $H(z,0,0,0)=(0,0)$ for all $z\in\cm$. And by elementary computation, we have
\[
\nabla_{(w,\chi)}H(z,0,0,0)[\va,\phi]=\big( \nabla^2\cj_0(z)[\va]-\phi,(I-P_z)\va \big)
\]
for $\va\in \msd^{\frac12}(\R^m,\mbs(\R^m))$ and $\phi\in T_z\cm$. Hence, it follows from the invertibility of $\nabla^2\cj_0(z)|_{T_z\cm^\bot}$ that $\nabla_{(w,\chi)}H(z,0,0,0)$ is invertible and 
\begin{\equ}\label{invert-bdd}
\|\nabla_{(w,\chi)}H(z,0,0,0)^{-1}\|\leq C_0, \quad \forall z\in\cm
\end{\equ}
for some $C_0>0$. Then, by applying the Implicit Function Theorem, one soon obtains the existence of $(w(\vr,z),\chi(\vr,z))$ such that $H(z,\vr,w(\vr,z),\chi(\vr,z))=(0,0)$. This proves $(1)$ and $(2)$.

To see $(3)$ we need more careful analysis of the mapping $w_\vr(z)=w(\vr,z)$. To start with, let us use the invertibility of $\nabla_{(w,\chi)}H(z,0,0,0)$ to define $F_{z,\vr}: \msd^{\frac12}(\R^m,\mbs(\R^m))\times T_z\cm\to \msd^{\frac12}(\R^m,\mbs(\R^m))\times T_z\cm$
\[
F_{z,\vr}(\va,\phi)=-\nabla_{(w,\chi)}H(z,0,0,0)^{-1}\Big( H(z,\vr,\va,\phi)-\nabla_{(w,\chi)}H(z,0,0,0)[\va,\phi] \Big).
\]
Then we can see that $(w(\vr,z),\chi(\vr,z))$ is a fixed point of $F_{z,\vr}$. We claim that 
\begin{claim*}
There exist $L_0,\vr_0>0$ such that, for any given $L>L_0$, $F_{z,\vr}$ is a contraction mapping on $B_{\vr}:=\big\{(\va,\phi)\in \msd^{\frac12}(\R^m,\mbs(\R^m))\times T_z\cm:\, \|\va\|^2+\|\phi\|^2\leq L^2\vr^2\big\}$, for all $\vr\in(-\vr_0,\vr_0)$.
\end{claim*}

We only need to show that $F_{z,\vr}(\va,\phi)\in B_{\vr}$ and
\[
\|F_{z,\vr}(\va_1,\phi_1)-F_{z,\vr}(\va_2,\phi_2)\|\leq \de \|(\va_1,\phi_1)-(\va_1,\phi_1)\|
\]
for all $(\va_1,\phi_1),(\va_2,\phi_2)\in B_\vr$, where $\de\in(0,1)$. And, by \eqref{invert-bdd}, it is enough to show that
\begin{\equ}\label{cmt1}
\big\| \nabla\cj_\vr(\tilde z+\tilde\va)-\nabla^2\cj_0(z)[\va]\big\|\leq \frac{L|\vr|}{C_0}
\end{\equ}
and
\begin{\equ}\label{cmt2}
	\aligned
&\big\| \big(\nabla\cj_\vr(\tilde z+\tilde\va_1)-\nabla^2\cj_0(z)[\va_1]\big)-\big(\nabla\cj_\vr(\tilde z+\tilde\va_2)-\nabla^2\cj_0(z)[\va_2]\big)\big\|
\\
&\qquad \leq \frac{\de}{C_0}\|(\va_1,\phi_1)-(\va_1,\phi_1)\|.
\endaligned
\end{\equ}
Here and in the sequel, we will used the expressions given in Lemma \ref{expansion J functional} so that the gradient map $\nabla\cj_\vr(\cdot)$ is appropriately identified in the space $\msd^{\frac12}(\R^m,\mbs(\R^m))$, and it will cause no confusion if we make the difference between $\nabla\cj_\vr$ and derivatives of $\cj_0$. We shall also adopt such identification for higher order derivatives of $\cj_\vr$.

Using \eqref{inequ3}, \eqref{inequ4} and \eqref{inequ8}, we find
\[
\aligned
&\big\|\nabla\cj_\vr(\tilde z+\tilde\va)-\nabla^2\cj_0(z)[\va]\big\| \\[0.3em]
&\qquad = \big\|\nabla\cj_\vr(\tilde z+\tilde\va)-\nabla\cj_\vr(\tilde z)-\nabla^2\cj_\vr(\tilde z)[\va]+\nabla\cj_\vr(\tilde z)+(\nabla^2\cj_\vr(\tilde z)-\nabla^2\cj_0(z))[\va] \big\|\\[0.3em]
&\qquad \leq\int_0^1\big\| \nabla^2\cj_\vr(\tilde z+s\tilde\va)-\nabla^2\cj_\vr(\tilde z) \big\|\|\va\|ds + O(|\vr|) + O(|\vr|)\|\va\| \\[0.3em]
&\qquad \leq \begin{cases}
O(1)\|\va\|^2+O(1)\|\va\|^3+O(|\vr|)+O(|\vr|)\|\va\| & \text{if } m=2 \\[0.5em]
O(1)\|\va\|^{\frac{m+1}{m-1}}+O(1)\|\va\|^2+O(|\vr|)+O(|\vr|)\|\va\| & \text{if } m\geq3
\end{cases}
\endaligned
\]
since $\|z\|$ is uniformly bounded for $z\in\cm$. This proves \eqref{cmt1} when $L$ is fixed reasonably large. To see \eqref{cmt2}, we point out that
\[
\aligned
&\nabla\cj_\vr(\tilde z+\tilde\va_1)-\nabla\cj_\vr(\tilde z+\tilde\va_2)-\nabla^2\cj_0(z)[\va_1-\va_2] \\[0.3em]
&\qquad =\int_0^1\nabla^2\cj_\vr\big(\tilde z+\tilde\va_2+s(\tilde\va_1-\tilde\va_2)\big)[\tilde\va_1-\tilde\va_2]ds-\nabla^2\cj_0(z)[\va_1-\va_2]
\endaligned
\] 
and hence by \eqref{inequ4} and \eqref{inequ8} we get
\[
\aligned
&\big\| \big(\nabla\cj_\vr(\tilde z+\tilde\va_1)-\nabla^2\cj_0(z)[\va_1]\big)-\big(\nabla\cj_\vr(\tilde z+\tilde\va_2)-\nabla^2\cj_0(z)[\va_2]\big)\big\|
\\[0.3em]
&\qquad \leq O(|\vr|)\max_{s\in[0,1]}\big( 1+ \|z+\va_2+s(\va_1-\va_2)\|^{\frac2{m-1}}\big)\|\va_1-\va_2\| \\
&\qquad + \begin{cases}
\displaystyle O(1)\max_{s\in[0,1]}\|\va_2+s(\va_1-\va_2)\|\big( \|z\|+\|\va_2+s(\va_1-\va_2)\| \big)\|\va_1-\va_2\| & \text{if } m=2, \\[0.6em]
\displaystyle O(1)\max_{s\in[0,1]}\big( \|\va_2+s(\va_1-\va_2)\|^{\frac2{m-1}}+\|\va_2+s(\va_1-\va_2)\| \big)\|\va_1-\va_2\| & \text{if }m\geq3.
\end{cases}
\endaligned
\]
Therefore, when $|\vr|$ is small enough, we obtain \eqref{cmt2}. And the claim is proved. 

\medskip

As an immediate consequence of the above claim, we find that $F_{z,\vr}$ always has a fixed point in $B_\vr$. Hence we conclude that $\|w_\vr(z)\|=\|w(\vr,z)\|\leq L\vr$, with $L>L_0$ being fixed. Now, in order to prove $(3)$, we write
\[
\aligned
\nabla\cj_\vr\big(\tilde z + \widetilde{w_\vr(z)}\big)&=\nabla\cj_\vr\big(\tilde z + \widetilde{w_\vr(z)}\big)-\nabla\cj_0(z+w_\vr(z))-\vr\nabla\Ga(z+w_\vr(z)) \\[0.3em]
&\qquad +\nabla\cj_0(z+w_\vr(z))-\nabla^2\cj_0(z)[w_\vr(z)] \\[0.3em]
&\qquad +\vr\nabla\Ga(z+w_\vr(z))-\vr\nabla\Ga(z)\\[0.3em]
&\qquad +\nabla^2\cj_0(z)[w_\vr(z)]+\vr\nabla\Ga(z)
\endaligned
\]
in which we can use \eqref{inequ2}, \eqref{inequ7} and \eqref{inequ8} to get
\[
\big\|\nabla\cj_\vr\big(\tilde z + \widetilde{w_\vr(z)}\big)-\nabla\cj_0(z+w_\vr(z))-\vr\nabla\Ga(z+w_\vr(z))\big\|=O(\vr^2),
\]
\[
\aligned
\big\|\nabla\cj_0(z+w_\vr(z))-\nabla^2\cj_0(z)[w_\vr(z)]\big\|&\leq\int_0^1\big\|\nabla^2\cj_0(z+sw_\vr(z))-\nabla^2\cj_0(z)\big\|\|w_\vr(z)\|ds \\[0.3em]
&\leq \begin{cases}
O(1)\|w_\vr(z)\|^2=O(\vr^2) & \text{if } m=2\\[0.5em]
O(1) \|w_\vr(z)\|^{\frac{m+1}{m-1}}=O\big(|\vr|^{\frac{m+1}{m-1}}\big) & \text{if } m\geq3
\end{cases}
\endaligned
\]
and
\[
\big\|\nabla\Ga(z+w_\vr(z))-\nabla\Ga(z)\big\|=O(1)\|w_\vr(z)\|=O(|\vr|).
\]
Hence, we deduce from $(2)$ that
\[
\chi(\vr,z)=\nabla^2\cj_0(z)[w_\vr(z)]+\vr\nabla\Ga(z)+\begin{cases}
	O(1)\|w_\vr(z)\|^2=O(\vr^2) & \text{if } m=2,\\[0.5em]
	O(1) \|w_\vr(z)\|^{\frac{m+1}{m-1}}=O\big(|\vr|^{\frac{m+1}{m-1}}\big) & \text{if } m\geq3.
\end{cases}
\]
Projecting this equation on $T_z\cm^\bot$ and applying the operator $K_z=\big(\nabla^2\cj_0(z)|_{T_z\cm^\bot}\big)^{-1}$ on both sides, we obtain assertion $(3)$. 

Finally $(4)$ is a direct consequence of $(1)$ and $(2)$. This completes the proof.
\end{proof}

\begin{Rem}
Comparing with \eqref{AM esti} and \eqref{w-vr}, though Proposition \ref{w-chi} is quite similar to the framework in Section \ref{sec:abstract results}, we carry out the details here mainly because the functional $\cj_\vr$ is involved with the infinite series $\tilde\ih\in\ch(p)$. Clearly, Proposition \ref{w-chi} suggests that Theorem \ref{abstract result3} can be applied to the functional $\cj_\vr$.
\end{Rem}
	
\subsection{Proof of Theorem \ref{main thm}}

In virtue of Lemma \ref{first properties} and Proposition \ref{prop hat-Phi-ih}, let us denote $\cj_\vr^{(i)}$ the Euler functional corresponding to the metric $\ig^{(i)}_\vr=\ig_{\R^m}+\vr a_i h(x-x_i)$, where $h$ is a given compactly supported elementary matrix satisfying the hypotheses of Proposition \ref{prop hat-Phi-ih}, $a_i\in\R$ and $x_i\in\R^m$ are fixed. Let $\Ga^{(i)}$ and $\Phi^{(i)}$ be the corresponding functionals appearing in the expansion of $\cj_\vr^{(i)}$. According to the abstract setting in Section \ref{sec:abstract results}, we denote $w_\vr(z)$ and $w_\vr^{(i)}(z)$ the solutions to the auxiliary equations $P_z\nabla \cj_\vr(z+w)=0$ and $P_z\nabla\cj_\vr^{(i)}(z+w)=0$, respectively, where $P_z:\msd^{\frac12}(\R^m,\mbs(\R^m))\to T_z\cm^\bot$ is the orthogonal projection.

It can be seen from Proposition \ref{prop hat-Phi-ih} that the reorganized functional $\hat\Phi^\ih$, which is given by \eqref{the reorganized functional}, possesses some negative minimum and tends to zero at the boundary of $(0,+\infty)\times\R^m\times S^{2^{[\frac m2]+1}-1}(\mbs_m)$. Hence, we can find an open bounded subset $U\subset \cg=(0,+\infty)\times\R^m$ and $\de>0$ such that $\ov U\subset \cg$ and
\[
\inf_{\ga\in\cn}\Big(\min_{\pa U}\hat\Phi^\ih(\psi_{\,\cdot,\,\cdot,\ga}) -\min_{\ov U}\hat\Phi^\ih(\psi_{\,\cdot,\,\cdot,\ga})\Big)\geq\de
\]
where $\cn=S^{2^{[\frac m2]+1}-1}(\mbs_m)$. In what follows, we keep this precompact set $U$ being fixed and denote
\[
U_{x_i}=\big\{(\lm,\xi)\in\cg:\, (\lm,\xi-x_i)\in U\big\}.
\]

\begin{Lem}\label{Lem:sum}
\begin{itemize}
	\item[$(1)$] Let $\cm_c$ be a compact subset of $\cm$, then there exists $C>0$ such that for $|\vr|$ small there hold
	\[
	\big\|w_\vr(z)-w_\vr^{(i)}(z)\big\|\leq C|\vr| \big\|\nabla\Ga(z)-\nabla\Ga^{(i)}(z)\big\|
	\]
	for all $z\in\cm_c$\,.
	
	\item[$(2)$] For $z=\psi_{\lm,\xi,\ga}\in\cm$ with $(\lm,\xi)\in \ov U_{x_i}$, there exist $C,L>0$ such that if $|x_j-x_i|\geq L$ for all $j\neq i$ then
	\[
	\big\|\nabla\Ga(z)-\nabla\Ga^{(i)}(z)\big\|\leq C\sum_{\substack{j\geq1 \\ j\neq i}}\frac{|a_j|}{|x_j-x_i|^{m-1}}.
	\]
\end{itemize}
\end{Lem}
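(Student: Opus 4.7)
The plan for part (1) is to subtract the two auxiliary equations and to invert $\nabla^2\cj_0(z)$ on $T_z\cm^\bot$ via the uniformly bounded operator $K_z$ (cf.\ \eqref{invert-bdd}). Writing $v := w_\vr(z) - w_\vr^{(i)}(z)$, I will combine the identities $P_z\nabla\cj_\vr(\tilde z + \widetilde{w_\vr(z)}) = 0$ and $P_z\nabla\cj_\vr^{(i)}(\tilde z + \widetilde{w_\vr^{(i)}(z)}) = 0$ with the Taylor expansion technique employed in the proof of Proposition \ref{w-chi} to derive
\[
\nabla^2\cj_0(z)[v] \;=\; -\vr\, P_z\bigl(\nabla\Ga(z) - \nabla\Ga^{(i)}(z)\bigr) + R(\vr,z),
\]
where the remainder $R(\vr,z)$ collects all higher-order contributions. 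Each such contribution can be controlled by the Lipschitz-type estimates \eqref{inequ2}, \eqref{inequ4}, \eqref{inequ7}, \eqref{inequ8} together with the uniform bounds $\|w_\vr(z)\|, \|w_\vr^{(i)}(z)\| = O(|\vr|)$ on $\cm_c$ furnished by Proposition \ref{w-chi}; the net result is $\|R(\vr,z)\| = O(\vr^2)\,\|\nabla\Ga(z)-\nabla\Ga^{(i)}(z)\| + O(|\vr|)\,\|v\|$. Applying $K_z$ and absorbing the $O(|\vr|)\,\|v\|$ term into the left-hand side for $|\vr|$ small then closes the estimate.

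For part (2), the plan is to exploit the disjointness of supports: since $|x_j-x_k|>4\diam(\supp h)$ for $j\neq k$, the translates $h(\cdot-x_j)$ have pairwise disjoint supports. The formula (2) of Lemma \ref{first properties} for $\nabla\Ga$ is linear in the symmetric tensor $\tilde\ih = \sum_j a_j h(\cdot-x_j)$, so the decomposition
\[
\nabla\Ga(z) - \nabla\Ga^{(i)}(z) \;=\; \sum_{j\neq i} \nabla\Ga^{(j)}(z)
\]
holds, where $\Ga^{(j)}$ is the functional attached to the single perturbation $a_j h(\cdot-x_j)$. It therefore suffices to prove a pointwise bound of the form $\|\nabla\Ga^{(j)}(z)\|\leq C|a_j|/|x_j-x_i|^{m-1}$ for each $j\neq i$, after which summation over $j$ yields the claim.

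For this pointwise bound I will use that $z=\psi_{\lm,\xi,\ga}$ with $(\lm,\xi)\in\ov U_{x_i}$, so $\lm$ is bounded away from $0$ and $\infty$ and $\xi-x_i$ ranges over the compact set $\ov U$. Choosing $L$ larger than $2\diam(\ov U)+2\diam(\supp h)$, I obtain $|x-\xi|\geq\tfrac12|x_j-x_i|$ for every $x\in\supp h(\cdot-x_j)$. The explicit expression \eqref{critical manifold explicit} then gives the pointwise bounds
\[
|\psi_{\lm,\xi,\ga}(x)|+|\nabla \psi_{\lm,\xi,\ga}(x)| \;\leq\; \frac{C}{|x_j-x_i|^{m-1}}
\]
on $\supp h(\cdot-x_j)$. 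Plugging these into the three integral terms of formula \eqref{nabla Ga} (restricted to $a_j h(\cdot-x_j)$), pairing with an arbitrary test spinor $v\in\msd^{\frac12}(\R^m,\mbs(\R^m))$, and applying Hölder's inequality together with the local embedding $\msd^{\frac12}\hookrightarrow L^{2m/(m-1)}$ on the bounded set $\supp h(\cdot-x_j)$, yields the desired estimate.

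The main obstacle is (1), and the essential difficulty there is not conceptual but bookkeeping: one must verify that the difference of nonlinear remainders arising from the two Taylor expansions splits cleanly into a piece proportional to $\nabla\Ga(z)-\nabla\Ga^{(i)}(z)$ and a genuinely smaller Lipschitz-in-$v$ piece. This forces the expansions to be performed at the \emph{same} base point $z$ rather than at $z+w_\vr(z)$ and $z+w_\vr^{(i)}(z)$ separately, and it requires applying the Lipschitz estimates of Lemma \ref{lem:inequalities} carefully on $\cm_c$ so that the constants remain uniform in $z$.
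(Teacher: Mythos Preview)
Your treatment of part~(2) is essentially the same as the paper's: linearity of $\Ga$ in $\tilde\ih$ gives $\nabla\Ga(z)-\nabla\Ga^{(i)}(z)=\sum_{j\neq i}\nabla\Ga^{(j)}(z)$, and each summand is estimated by pairing the three terms of \eqref{nabla Ga} against a test spinor, using the pointwise decay of $\psi_{\lm,\xi,\ga}$ and $\nabla\psi_{\lm,\xi,\ga}$ on $\Om_j=\supp\ih(\cdot-x_j)$, and H\"older plus the Sobolev embedding. This is exactly what the paper does.

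For part~(1), however, you work much harder than necessary and your claimed remainder bound is not correct. The paper's argument is two lines: Proposition~\ref{w-chi}\,(3) already gives $w_\vr(z)=-\vr K_z(\nabla\Ga(z))+O(|\vr|^\mu)$ and likewise $w_\vr^{(i)}(z)=-\vr K_z(\nabla\Ga^{(i)}(z))+O(|\vr|^\mu)$, with remainders uniform on $\cm$; subtracting and using the uniform bound on $K_z$ gives
\[
\big\|w_\vr(z)-w_\vr^{(i)}(z)\big\|\leq C|\vr|\,\big\|\nabla\Ga(z)-\nabla\Ga^{(i)}(z)\big\|+o(\vr),
\]
which is all that is actually used downstream in Proposition~\ref{prop compare}. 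Your route---subtracting the full auxiliary equations and Taylor-expanding both at $z$---also works, but your assertion that the remainder splits as $O(\vr^2)\|\nabla\Ga(z)-\nabla\Ga^{(i)}(z)\|+O(|\vr|)\|v\|$ is false on two counts. First, the difference of the $\vr^2$-order pieces of $\nabla\cj_\vr$ and $\nabla\cj_\vr^{(i)}$ is $\vr^2\bigl(\nabla\Phi(z)-\nabla\Phi^{(i)}(z)\bigr)+o(\vr^2)$; since $\Phi$ is \emph{quadratic} in $\tilde\ih$, this is not controlled by $\|\nabla\Ga(z)-\nabla\Ga^{(i)}(z)\|$ (indeed, by part~(2) the latter can be made arbitrarily small on $\ov U_{x_i}$ while the former stays of fixed size). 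Second, the nonlinear increment $R_0(w_\vr)-R_0(w_\vr^{(i)})$ is, via \eqref{inequ8}, only $O(|\vr|^{2/(m-1)})\|v\|$ for $m\geq3$, not $O(|\vr|)\|v\|$; this is still $o(1)\|v\|$ and absorbable, but your stated exponent is wrong. The honest outcome of your computation is therefore $\|v\|\leq C|\vr|\|\nabla\Ga(z)-\nabla\Ga^{(i)}(z)\|+O(\vr^2)$, i.e.\ the same estimate the paper obtains more cheaply from Proposition~\ref{w-chi}\,(3).
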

\begin{proof}
Since the linear operator $K_z=\nabla^2\cj_0(z)^{-1}:T_z\cm^\bot\to T_z\cm^\bot$ is uniformly bounded for $z\in\cm$ (see \cite[Lemma 4.11]{Isboe-Xu21}), we soon obtain from \eqref{w-vr} and Proposition \ref{w-chi} that
\[
\big\|w_\vr(z)-w_\vr^{(i)}(z)\big\|=|\vr|\big\| K_z(\nabla\Ga(z))-K_z(\nabla\Ga^{(i)}(z)) \big\|+o(\vr),
\]
which proves the assertion $(1)$.

To check $(2)$, let us use Lemma \ref{first properties} $(2)$ (which can be also applied to compute $\nabla\Ga^{(i)}$) to get the estimate: for any $v\in \msd^{1/2}(\R^m,\mbs(\R^m))$
\[
\aligned
&\big|\inp{\nabla\Ga(\psi_{\lm,\xi,\ga})}{v}-\inp{\nabla\Ga^{(i)}(\psi_{\lm,\xi,\ga})}{v}\big| \\[0.3em]
&\qquad \leq C \Big(\sum_{j\neq i} |a_j| \int_{\Om_j}|\nabla\psi_{\lm,\xi,\ga} |\cdot|v| +|\psi_{\lm,\xi,\ga}|\cdot|v|d\vol_{\ig_{\R^m}} \Big) \\
&\qquad \leq C\Big(\sum_{j\neq i}|a_j| \int_{\Om_j}\frac{\lm^{\frac{m-1}2}}{\big(\lm^2+|x-\xi|^2\big)^{\frac m2}}\cdot|v| +\frac{\lm^{\frac{m-1}2}}{\big(\lm^2+|x-\xi|^2\big)^{\frac{m-1}2}}\cdot|v|d\vol_{\ig_{\R^m}} \Big),
\endaligned
\]
where $\Om_j=\supp \ih(\cdot-x_j)$ for $j\geq 1$, and in the last inequality we have used the facts
\[
|\psi_{\lm,\xi,\ga}(x)|_{\ig_{\R^m}}\sim\frac{\lm^{\frac{m-1}{2}}}{\big( \lm^2+|x-\xi|^2 \big)^{\frac{m-1}{2}}} \quad \text{and} \quad 
|\nabla\psi_{\lm,\xi,\ga}(x)|\sim \frac{\lm^{\frac{m-1}{2}}}{\big( \lm^2+|x-\xi|^2 \big)^{\frac{m}{2}}}.
\]
Now, using the H\"older and Sobolev inequalities, we know that for $(\lm,\xi)\in\ov U_{x_i}$ there holds
\[
\big|\inp{\nabla\Ga(\psi_{\lm,\xi,\ga})}{v}-\inp{\nabla\Ga^{(i)}(\psi_{\lm,\xi,\ga})}{v}\big|  \leq C\|v\|\sum_{j\neq i}\Big( \frac{|a_j|}{|x_j-x_i|^{m}} +  \frac{|a_j|}{|x_j-x_i|^{m-1}} \Big)
\]
provided $|x_j-x_i|\geq L$, $j\neq i$, with $L$ large enough (say $L\geq\diam U+1$). This completes the proof.
\end{proof}

The next result will be devoted to compare the values of $\cj_\vr^{red}(z):=\cj_\vr(z+w_\vr(z))$ and $\cj_\vr^{(i),red}(z):=\cj_\vr^{(i)}(z+w_\vr^{i}(z))$ for $z=\psi_{\lm,\xi,\ga}\in\cm$ with $(\lm,\xi)\in \ov U_{x_i}$.

\begin{Prop}\label{prop compare}
There exists $C>0$ such that for $|\vr|$ small there holds
\[
\big|\cj_\vr^{red}(z)-\cj_\vr^{(i),red}(z)  \big|
\leq C|\vr|\bigg( \sum_{\substack{j\geq1 \\ j\neq i}}\frac1{|x_j-x_i|^{\frac{(m-1)\tau}{\tau-1}}} \bigg)^{\frac{\tau-1}{\tau}}
\]
for all $z=\psi_{\lm,\xi,\ga}\in\cm$ with $(\lm,\xi)\in \ov U_{x_i}$, and $|x_j-x_i|\geq L$.
\end{Prop}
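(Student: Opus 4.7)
The plan is to decompose the target difference by inserting the common point $\tilde z + \widetilde{w_\vr^{(i)}(z)}$, writing
\[
\cj_\vr^{red}(z) - \cj_\vr^{(i),red}(z) = A(z) + B(z),
\]
with $A(z) := \cj_\vr(\tilde z + \widetilde{w_\vr(z)}) - \cj_\vr(\tilde z + \widetilde{w_\vr^{(i)}(z)})$ and $B(z) := \cj_\vr(\tilde z + \widetilde{w_\vr^{(i)}(z)}) - \cj_\vr^{(i)}(\tilde z + \widetilde{w_\vr^{(i)}(z)})$. Here $A(z)$ measures the discrepancy between the two correction spinors through a single functional, while $B(z)$ measures the residual difference of the two functionals at a common spinor.

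For $A(z)$, I would exploit the critical-slice identity in Proposition \ref{w-chi}(2), namely $\nabla\cj_\vr(\tilde z + \widetilde{w_\vr(z)}) = \chi(\vr,z) \in T_z\cm$, together with the fact that $w_\vr(z) - w_\vr^{(i)}(z) \in T_z\cm^\bot$ (both terms belong to $T_z\cm^\bot$ by Proposition \ref{w-chi}(1)). A first-order Taylor expansion along the segment connecting the two correction spinors therefore has vanishing linear term, and the Lipschitz bound \eqref{inequ6} on $\nabla\cj_\vr$ yields $|A(z)| \leq C\|w_\vr(z) - w_\vr^{(i)}(z)\|^2$. Lemma \ref{Lem:sum} then bounds this by $C\vr^2 \bigl(\sum_{j\neq i}|a_j|/|x_j-x_i|^{m-1}\bigr)^2$.

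For $B(z)$, I would expand both functionals by Lemma \ref{expansion J functional}, observing that they share the unperturbed part $\cj_0$, so
\[
B(z) = \vr\bigl[\Ga(u) - \Ga^{(i)}(u)\bigr] + \vr^2\bigl[\Phi(u) - \Phi^{(i)}(u)\bigr] + o(\vr^2), \qquad u := z + w_\vr^{(i)}(z).
\]
The residual perturbation $\tilde\ih - a_i\ih(\cdot - x_i) = \sum_{j\neq i}a_j\ih(\cdot - x_j)$ is supported on the disjoint neighbourhoods of $\{x_j\}_{j\neq i}$, while $u$ is an $O(|\vr|)$-small perturbation of $\psi_{\lm,\xi,\ga}$, which for $(\lm,\xi) \in \ov U_{x_i}$ is concentrated near $x_i$. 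Pointwise decay of $\psi_{\lm,\xi,\ga}$ then gives $|u| \lesssim |x_j-x_i|^{-(m-1)}$ and $|\nabla u| \lesssim |x_j-x_i|^{-m}$ on each $\supp\ih(\cdot-x_j)$. Plugging these bounds into the explicit formulas for $\Ga, \Phi$ and summing over $j$ should produce $|B(z)| \leq C|\vr|\sum_{j\neq i}|a_j|/|x_j-x_i|^{m-1}$ (the $\vr^2$-contribution in fact decays faster and is absorbed).

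To convert the weighted sum into the stated form, apply H\"older's inequality with conjugate exponents $\tau$ and $\tau/(\tau-1)$, using $\sum_j |a_j|^\tau < +\infty$ from the definition of $\ch(p)$:
\[
\sum_{j\neq i}\frac{|a_j|}{|x_j-x_i|^{m-1}} \leq \Big(\sum_j |a_j|^\tau\Big)^{1/\tau}\Big(\sum_{j\neq i}\frac{1}{|x_j-x_i|^{(m-1)\tau/(\tau-1)}}\Big)^{(\tau-1)/\tau}.
\]
The contribution from $A(z)$ is $\vr^2$ times the square of this H\"older sum, which for $|\vr|$ small and $|x_j - x_i| \geq L$ large is absorbed as a higher-order correction. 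The step I expect to be the main obstacle is the uniform tail estimate for $B(z)$: one needs to justify passing from the explicit formulas for $\Ga, \Phi$ to integral-by-integral control on each distant support $\supp\ih(\cdot - x_j)$, uniformly in $\ga \in \cn$ and $(\lm,\xi) \in \ov U_{x_i}$, while also confirming that the $w_\vr^{(i)}(z)$-piece does not destroy the pointwise tail decay of the $\psi_{\lm,\xi,\ga}$-piece of $u$.
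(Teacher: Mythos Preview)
Your decomposition $A+B$ is exactly the one the paper uses, and the final H\"older step is identical. The two differences worth noting are:

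\textbf{For $A(z)$.} The paper does not exploit the orthogonality $\chi(\vr,z)\perp (w_\vr-w_\vr^{(i)})$; it simply invokes the uniform boundedness of $\nabla\cj_\vr(z+w_\vr(z))$ on the relevant set to get the linear bound $|A(z)|\leq C\|w_\vr(z)-w_\vr^{(i)}(z)\|$, and then applies Lemma~\ref{Lem:sum}. Your quadratic bound is sharper but unnecessary here, since the linear bound already produces the stated $|\vr|\sum_{j\neq i}|a_j|/|x_j-x_i|^{m-1}$ directly.

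\textbf{For $B(z)$.} The obstacle you flag at the end is genuine for your direct approach: $w_\vr^{(i)}(z)$ is only controlled in $\msd^{\frac12}$, so you have no pointwise tail decay for $u=z+w_\vr^{(i)}(z)$ on the distant supports $\supp\ih(\cdot-x_j)$, and the integrands in $\Ga,\Phi$ involve $\nabla u$. The paper sidesteps this entirely by invoking Lemma~\ref{first properties}, namely $\Ga|_\cm=\Ga^{(i)}|_\cm\equiv 0$. Taylor-expanding each term at $z$ then gives
\[
\Ga(z+w_\vr^{(i)}(z))=\nabla\Ga(z)[w_\vr^{(i)}(z)]+o(\vr),\qquad \Ga^{(i)}(z+w_\vr^{(i)}(z))=\nabla\Ga^{(i)}(z)[w_\vr^{(i)}(z)]+o(\vr),
\]
so that only the $\msd^{\frac12}$-norm estimate $\|w_\vr^{(i)}(z)\|=O(|\vr|)$ together with the dual bound on $\nabla\Ga(z)-\nabla\Ga^{(i)}(z)$ from Lemma~\ref{Lem:sum}(2) is needed---no pointwise information about $w_\vr^{(i)}(z)$ at all. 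This is the missing idea in your plan for $B$.
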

\begin{proof}
	Following from Lemma \ref{Lem:sum} and the boundedness of $\nabla \cj_\vr(z+w_\vr(z))$, we have
	\[
	\aligned
	&\big|\cj_\vr^{red}(z)-\cj_\vr^{(i),red}(z)  \big|\\[0.3em]
	&\qquad \leq \big| \cj_\vr(z+w_\vr(z))-\cj_\vr(z+w_\vr^{(i)}(z))  \big| + \big| \cj_\vr(z+w_\vr^{(i)}(z))-\cj_\vr^{(i)}(z+w_\vr^{(i)}(z)) \big|\\[0.3em]
	&\qquad \leq C\big\|w_\vr(z)-w_\vr^{(i)}(z)\big\| + \big| \cj_\vr(z+w_\vr^{(i)}(z))-\cj_\vr^{(i)}(z+w_\vr^{(i)}(z)) \big|\\[0.3em]
	&\qquad\leq |\vr| C\big\|\nabla\Ga(z)-\nabla\Ga^{(i)}(z)\big\| + \big| \cj_\vr(z+w_\vr^{(i)}(z))-\cj_\vr^{(i)}(z+w_\vr^{(i)}(z)) \big|\\[0.3em]
	&\qquad \leq |\vr| C\sum_{\substack{j\geq1 \\ j\neq i}}\frac{|a_j|}{|x_j-x_i|^{m-1}}+ \big| \cj_\vr(z+w_\vr^{(i)}(z))-\cj_\vr^{(i)}(z+w_\vr^{(i)}(z)) \big|.
	\endaligned
	\]
	By noting that
	\[
	\cj_\vr(z+w_\vr^{(i)}(z))-\cj_\vr^{(i)}(z+w_\vr^{(i)}(z))=\vr\Ga(z+w_\vr^{(i)}(z))-\vr\Ga^{(i)}(z+w_\vr^{(i)}(z))+o(\vr)
	\]
	and $\Ga|_\cm=\Ga^{(i)}|_\cm\equiv0$, by using \eqref{AM esti} and Lemma \ref{lem:inequalities}, we have 
	\[
	\Ga(z+w_\vr^{(i)}(z))=\nabla\Ga(z)[w_\vr^{(i)}(z)]+o(\vr)=O(\vr)
	\]
	and
	\[
	\Ga^{(i)}(z+w_\vr^{(i)}(z))=\nabla\Ga^{(i)}(z)[w_\vr^{(i)}(z)]+o(\vr)=O(\vr)
	\]
	for $z=\psi_{\lm,\xi,\ga}\in\cm$ with $(\lm,\xi)\in \ov U_{x_i}$. Then, as long as $|x_j-x_i|\geq L$ for all $j\neq i$, we deduce
	\[
	\big|\cj_\vr^{red}(z)-\cj_\vr^{(i),red}(z)  \big|\leq |\vr| C\sum_{\substack{j\geq1 \\ j\neq i}}\frac{|a_j|}{|x_j-x_i|^{m-1}}
	\leq |\vr|C \bigg(\sum_{\substack{j\geq 1 \\ j\neq i}} \frac1{|x_j-x_i|^{\frac{(m-1)\tau}{\tau-1}}} \bigg)^{\frac{\tau-1}{\tau}}
	\]
	provided that
	\[
	\sum_{j\geq 1}|a_j|^\tau<+\infty.
	\]
\end{proof}

Now we are ready to prove our main results:

\begin{proof}[Complete proof of Theorem \ref{main thm}] For a given $k\geq1$, let us fix arbitrarily $x_0\in\R^m$ with $|x_0|=1$ and take $\tilde\ih$ to be of the form \eqref{h-expansion} with $\ih$ being a compactly supported elementary matrix and satisfying the hypotheses of Proposition \ref{prop hat-Phi-ih}, $a_j=j^{-\bt}$ and $x_j=j^{\al}r x_0$ for $j\in\N$, where 
\begin{\equ}\label{the choice}
r=\frac{C_0}{|\vr|^{1/{m-1}}}, \quad \al>1, \quad 2\al k<\bt
\end{\equ} 
and $C_0>0$ is a constant fixed large enough (see below). With the above choice of $a_j$, we have $\sum_j|a_j|^\tau<+\infty$ since $\bt>1>\frac1\tau$. Note also that $\al>1$, we have $|x_j-x_i|>4\diam(\supp\ih)$ for all $i\neq j$ when $|\vr|$ is small enough. 

From the expansions in Lemma \ref{expansion J functional} and \eqref{L-vr-expansion-deeper}, we have that, for a fixed $i\geq1$,
\[
\cj_\vr^{(i),red}(z)=\cj_0(z)+\vr^2a_i^2\hat\Phi^{\ih(\cdot-x_i)}(z)+o(\vr^2a_i^2)
\]
and, by Proposition \ref{prop hat-Phi-ih}, $\cj_\vr^{(i),red}$ attains a local minimum $z_i=\psi_{\lm_i,\xi_i,\ga_i}$ with $(\lm_i,\xi_i)\in U_{x_i}$ and $\ga_i\in\cn$. In particular, 
\[
\inf_{\ga\in\cn}\Big(\min_{\pa U_{x_i}}\hat\Phi^{\ih(\cdot-x_i)}(\psi_{\,\cdot,\,\cdot,\ga}) -\min_{\ov U_{x_i}}\hat\Phi^{\ih(\cdot-x_i)}(\psi_{\,\cdot,\,\cdot,\ga})\Big)\geq\de
\]
for some $\de>0$ independent of $\vr$ and $i$.

If we choose $C_0$ in \eqref{the choice} so large that $\min_{j\neq i}|x_j-x_i|\geq L$, so Proposition \ref{prop compare} holds, then we have
\[
\aligned
\big|\cj_\vr^{red}(z)-\cj_\vr^{(i),red}(z)  \big|
&\leq C|\vr|\bigg( \sum_{\substack{j\geq1 \\ j\neq i}}\frac1{|x_j-x_i|^{\frac{(m-1)\tau}{\tau-1}}} \bigg)^{\frac{\tau-1}{\tau}} \\
&=\frac{C|\vr|}{r^{m-1}}\bigg( \sum_{\substack{j\geq1 \\ j\neq i}}\frac1{|j^\al-i^\al|^{\frac{(m-1)\tau}{\tau-1}}} \bigg)^{\frac{\tau-1}{\tau}} \\
&\leq \frac{C|\vr|}{r^{m-1}}\cdot\frac{1}{i^{(\al-1)(m-1)}}
\endaligned
\]
when $i$ is fixed large,
where we have used the inequality (see for instance \cite[Lemma 4.4]{BM})
\[
\sum_{\substack{j\geq1 \\ j\neq i}}\frac1{|j^\al-i^\al|^{\frac{(m-1)\tau}{\tau-1}}}\leq \frac{C}{i^{\frac{(\al-1)(m-1)\tau}{\tau-1}}}
\]
for $\al>1$ and $m\geq2$. By enlarging $C_0$ if necessary, we shall have
\[
\frac{C|\vr|}{r^{m-1}}\leq \frac\de4 \vr^2.
\]
And hence, when
\begin{\equ}\label{al-bt-inequ1}
(\al-1)(m-1)\geq2\bt,
\end{\equ}
we find $\cj_\vr^{red}$ has a strict local minimum $\tilde z_i=\psi_{\tilde\lm_1,\tilde\xi_i,\tilde\ga_i}$ ``near" $z_i$ in the sense that  $(\tilde\lm_i,\tilde\xi_i)\in U_{x_i}$.

Summing up, we have proved that if \eqref{al-bt-inequ1} holds then, for all $i$ large and $|\vr|$ small, the functional $\cj_\vr^{red}$ attains a strict local minimum in $U_{x_i}\times\cn$. Hence there are infinitely many distinct solutions of Eq. \eqref{reduced Dirac problem 1}, denoted by $\{\tilde\va_\vr^{(i)}\}$.

To determine the $C^k$-regularity of the metrics $\tilde\ig_\vr$ at infinity and the pull-back metrics $\ig_\vr$ on $S^m$ using our choice \eqref{the choice}, let us denote $\tilde\ig_\vr^\sharp(x)=\tilde\ig_\vr( x/{|x|^2})$ and $\tilde\ig_\vr^{(i),\sharp}(x)=\tilde\ig_\vr^{(i)}(x/|x|^2)$, for $i\in\N$. Since $\ig_\vr$ is smooth on $S^m\setminus\{P\}$, we find that the regularity of $\ig_\vr$ at $P$ is the same of $\tilde\ig_\vr$ at infinity, and so it is the same of $\tilde\ig_\vr^\sharp$ at $0$. If we set $\Om_i=\supp \ih(\cdot-x_i)$, it follows that $\tilde\ig_\vr^{(i),\sharp}(x)-\ig_{\R^m}$ has support $\Om_i^\sharp:=\big\{x\in\R^m:\, x/|x|^2\in\Om_i  \big\}$. In this setting, since $\diam(\Om_i^\sharp)\sim |x_i|^{-2}$, we have the following basic estimate
\[
\|\tilde\ig_\vr^{(i),\sharp}-\ig_{\R^m}\|_{C^k}\leq C|\vr a_i||x_i|^{2k}\leq C\cdot C_0^{2k}|\vr|^{1-2k/(m-1)}i^{2k\al-\bt}
\]
for $k\geq1$. Let 
\[
\tilde\ig_{\vr,j}^\sharp=\ig_{\R^m}+\sum_{i=1}^j \big(\tilde\ig_\vr^{(i),\sharp}(x)-\ig_{\R^m}\big)
\]
we find that if $2k\al-\bt<0$ then
\[
\|\tilde\ig_{\vr,j}^\sharp-\tilde\ig_{\vr,l}^\sharp\|_{C^k}\leq \max_{i=j+1,\dots,l}\|\tilde\ig_\vr^{(i),\sharp}-\ig_{\R^m}\|_{C^k}\leq C\cdot C_0^{2k}|\vr|^{1-2k/(m-1)}(j+1)^{2k\al-\bt}
\]
for all $j<l$. And thus $\{\tilde\ig_{\vr,j}^\sharp\}_{j=1}^{\infty}$ is a Cauchy sequence in $C^k(B_1)$, where $B_1$ stands for the open ball of radius $1$ centered at the origin. Therefore,  $\tilde\ig_\vr^\sharp$ can be extended to $x=0$ in the class of $C^k$. And if there holds additionally that $1-2k/(m-1)>0$, then $\tilde\ig_\vr^\sharp\to\ig_{\R^m}$ as $\vr\to0$. 

There are three essential inequalities in the above arguments, namely \eqref{al-bt-inequ1},
\[
\bt>2k\al \quad \text{and} \quad 2k<m-1.
\]
They are satisfied with $m\geq 4k+2$,
\[
\al>4k+1 \quad \text{and} \quad 2k\al<\bt<2k\al+\frac{\al-(4k+1)}2.
\]

Finally, since the solutions $\{\tilde\va_\vr^{(i)}\}$ of Eq. \eqref{reduced Dirac problem 1} can be parameterized in the compact set $\ov U_{x_i}\times\cn$, via the conformal transformation mentioned in Section \ref{subsec:conformal}, we find that the  corresponding solutions $\{\psi_\vr^{(i)}\}$ of Eq. \eqref{SY-m-sphere} blow up at $P\in S^m$ in the following sense
\[
\|\psi_\vr^{(i)}\|_{L^{\infty}}\to+\infty \quad \text{as } i\to+\infty.
\]
And standard regularity arguments, see \cite[Appendix]{Isobe11} and \cite[Chapter 3]{Ammann}, imply that the weak solutions $\psi_\vr^{(i)}$ are indeed of class $C^1$ on $S^m$. This completes the proof.
\end{proof}

\section{The non-compactness caused by the geometric potential}\label{sec:blow-up2}

In this section, we intend to prove Theorem \ref{main thm1}, and our proof will be based upon the abstract result Theorem \ref{abstract result2} in Section \ref{sec:abstract results}.

Let $H\in C^2(S^m)$ be a given Morse function, satisfying the conditions (H-1) and (H-2) mentioned in Subsection \ref{subsec:conformal}. For simplicity, let us assume $H\geq0$ and takes its minimum at $p_0\in S^m$ and $H(p_0)=0$. Denote $\pi_{p_0}:S^m\setminus\{p_0\}\to\R^m$ the stereographic projection, we define $K(x)=H(\pi_{p_0}^{-1}(x))$ for $x\in\R^m$. Then $K\in L^\infty(\R^m)\cap C^2(\R^m)$ and satisfies the following (see \cite[Lemma 3.1]{Isboe-Xu21})
\begin{\equ}\label{K-inequ-1}
	|\nabla K(x)|\leq C_0(1+|x|^2)^{-1} \quad \text{and} \quad |\nabla^2 K(x)|\leq C_0(1+|x|^2)^{-3/2}
\end{\equ}  
for some constant $C_0>0$.
Taking into account the additional condition $H(p_0)=0$, we also have
\begin{Lem}\label{K-inequ-2}
	By suitably enlarging the constant $C_0$ in \eqref{K-inequ-1} (if necessary), there holds
	\[
	|K(x)|\leq C_0(1+|x|^2)^{-1}
	\]
	for $x\in\R^m$.
\end{Lem}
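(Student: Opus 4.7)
The strategy is to combine the vanishing of $H$ to second order at $p_0$ with the quantitative decay of the inverse stereographic projection toward $p_0$. First, observe that since $H \geq 0$ and $H(p_0) = 0$, the point $p_0$ is a global minimum of $H$ on $S^m$, so $\nabla_{\ig_{S^m}} H(p_0) = 0$. Because $H \in C^2(S^m)$ and $S^m$ is compact, a second-order Taylor expansion at $p_0$ yields a constant $C > 0$ (depending only on $\|H\|_{C^2(S^m)}$) such that
\[
|H(q)| \leq C \, d_{\ig_{S^m}}(q, p_0)^2 \quad \text{for every } q \in S^m.
\]

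Next, I would make the spherical distance $d_{\ig_{S^m}}(\pi_{p_0}^{-1}(x), p_0)$ explicit in terms of $|x|$. Realizing $S^m$ as the unit sphere in $\R^{m+1}$ with $p_0$ as the north pole, the inverse stereographic projection admits the classical formula $\pi_{p_0}^{-1}(x) = (2x,\ |x|^2 - 1)/(1+|x|^2)$, and a direct computation gives the chordal distance
\[
\big|\pi_{p_0}^{-1}(x) - p_0\big|_{\R^{m+1}} = \frac{2}{\sqrt{1+|x|^2}}.
\]
Since the intrinsic geodesic distance on $S^m$ is comparable with the restriction of the ambient Euclidean distance, there is an absolute constant $C' > 0$ with $d_{\ig_{S^m}}(\pi_{p_0}^{-1}(x), p_0) \leq C' (1+|x|^2)^{-1/2}$.

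Combining the two bounds yields $|K(x)| = |H(\pi_{p_0}^{-1}(x))| \leq C\, C'^{\,2} (1+|x|^2)^{-1}$. Enlarging the constant $C_0$ from \eqref{K-inequ-1} if necessary gives the claimed inequality. I do not foresee any substantial obstacle: the only ingredients are a second-order Taylor expansion at a critical point of $H$ and the standard asymptotic formula for the inverse stereographic projection, both of which are elementary. The key conceptual point, which is simply encoded in the hypothesis on $H$, is that $H(p_0) = 0$ together with $H \geq 0$ automatically provides the first-order vanishing needed to upgrade the $O((1+|x|^2)^{-1/2})$ decay of $\nabla K$ in \eqref{K-inequ-1} to an $O((1+|x|^2)^{-1})$ decay of $K$ itself.
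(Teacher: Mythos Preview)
Your proof is correct and follows essentially the same approach as the paper: a second-order Taylor expansion of $H$ at $p_0$ (using $H(p_0)=0$ and $\nabla_{\ig_{S^m}}H(p_0)=0$) combined with the explicit formula for $\pi_{p_0}^{-1}$ giving $|\pi_{p_0}^{-1}(x)-p_0|\sim(1+|x|^2)^{-1/2}$. The only cosmetic difference is that the paper applies Taylor's formula directly with the ambient chordal distance $|\pi_{p_0}^{-1}(x)-p_0|$, whereas you pass through the geodesic distance and then invoke its comparability with the chordal distance; your additional remark that $\nabla_{\ig_{S^m}}H(p_0)=0$ follows automatically from $p_0$ being a minimum is a nice clarification.
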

\begin{proof}
	Since $H(p_0)=0$ and $\nabla_{\ig_{S^m}}H(p_0)=0$, we have
	\[
	|K(x)|=|H(\pi_{p_0}^{-1}(x))-H(p_0)|\leq C|\pi_{p_0}^{-1}(x)-p_0|^2
	\]
	by the Taylor's formula, for some constant $C>0$. By rotation, we may assume that $p_0=(0,0,\dots,1)$ is the north pole and $\pi_{p_0}^{-1}(x)=\big( \frac{2x}{1+|x|^2},\frac{|x|^2-1}{1+|x|^2} \big)$, for $x\in\R^m$. Then the assertion follows from a simple calculation.
\end{proof}

\begin{Rem}
	Though the function $K$ is non-negative in this context, we kept the absolute value symbol in Lemma \ref{K-inequ-2} to emphasis that the inequality also holds true for sign-changing functions and the proof only needs the facts $H(p_0)=0$ and $\nabla_{\ig_{S^m}} H(p_0)=0$.
\end{Rem}

To proceed, let $\{z_i\}_{i=0}^\infty\subset\R^m$ and $\{a_i\}_{i=0}^\infty\subset\R$ be such that
\begin{itemize}
	\item[(1)] $|z_i-z_j|\gg1$ for $i\neq j$. For reader's convenience, we may simply take $z_i=i^\al z_0$ with $z_0\in\R^m$, $|z_0|=R\gg1$ and $\al>1$.
	
	\item[(2)] $a_i=i^{-\bt}$ with $\bt>1$.
\end{itemize}
From $K$, $\{z_i\}$ and $\{a_i\}$ as above, we define
\[
\tilde K(x)=\sum_{i=1}^\infty a_i K(x-z_i).
\]
Then it follows that the above summation converges uniformly in $x$ so that $\tilde K$ is well-defined and $\tilde K\in L^\infty(\R^m)\cap C^2(\R^m)$. In the sequel, for $\psi_{\lm,\xi,\ga}\in\cm$ (see \eqref{critical manifold}-\eqref{critical manifold explicit}), let us set
\[
\tilde\Ga(\psi_{\lm,\xi,\ga})=-\frac{m-1}{2m}\int_{\R^m}\tilde K(x)|\psi_{\lm,\xi,\ga}|_{\ig_{\R^m}}^{\frac{2m}{m-1}}d\vol_{\ig_{\R^m}}
\]
And it is clear that $\tilde\Ga(\psi_{\lm,\xi,\ga})$ is independent of the factor $\ga\in S^{2^{[\frac m2]+1}-1}(\mbs_m)$. Hence, in order to study $\tilde\Ga(\psi_{\lm,\xi,\ga})$, it is sufficient to consider (up to multiplication by a constant)
\[
\tilde\Psi(\lm,\xi):=\int_{\R^m}\tilde K(x)|\psi_{\lm,\xi,\ga}|_{\ig_{\R^m}}^{\frac{2m}{m-1}}d\vol_{\ig_{\R^m}}.
\]
We will also denote $\Ga^{(i)}$, $\Psi^{(i)}$ etc for functions corresponding to $K_i(x)=a_iK(x-z_i)$. Then we have
\[
\tilde\Psi(\lm,\xi)=\sum_{i=1}^\infty \Psi^{(i)}(\lm,\xi)
\]
and
\[
\Psi^{(i)}(\lm,\xi)=m^ma_i\int_{\R^m}\frac{K(\lm x+\xi-z_i)}{(1+|x|^2)^m}d\vol_{\ig_{\R^m}}
\]
where the above formulation comes from a change of variables. The following result is a direct consequence of the computations in \cite[Subsection 3.1]{Isboe-Xu21}, which characterizes the critical points of each $\Psi^{(i)}$ (hence $\Ga^{(i)}$). 

\begin{Prop}\label{prop:topology-degree}
Let $H\in C^2(S^m)$ and $K=H\circ\pi_{p_0}^{-1}$ be as above. Then the critical points of the function $\Psi:\cg=(0,+\infty)\times\R^m\to\R$,
\[
\Psi(\lm,\xi):=m^m\int_{\R^m}\frac{K(\lm x+\xi)}{(1+|x|^2)^m}d\vol_{\ig_{\R^m}},
\]
are isolated and there exists a bounded domain  $\Om_H\subset\cg$ such that
\[
\ov\Om_H\subset \cg, \quad Crit[\Psi]\subset\Om_H \quad \text{and} \quad
\deg(\nabla\Psi,\Om_H,0)\neq0,
\]
where the closure of $\Om_H$ is taken with respect to the standard Euclidean norm and ``$\deg$" stands for the topological degree.
\end{Prop}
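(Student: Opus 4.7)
The plan is to relate the critical-point structure of $\Psi$ on the non-compact manifold $\cg=(0,+\infty)\times\R^m$ to the Morse data of $H$ on $S^m$, by carefully controlling $\Psi$ near the two ``ends'' of $\cg$: the stratum $\{\lm=0\}$ and the ``$p_0$-end'' consisting of configurations with $\lm\to+\infty$ or $|\xi|\to+\infty$ (which corresponds, under the bubble interpretation, to concentration at $p_0\in S^m$).

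First I would expand $\Psi$ near $\lm=0$. Taylor-expanding $K(\lm z+\xi)$ inside the integral and killing the odd-order terms by the symmetry $z\mapsto-z$ of the kernel $(1+|z|^2)^{-m}$ yields
\[
\Psi(\lm,\xi)=\al_m K(\xi)+\be_m\lm^2\De K(\xi)+o(\lm^2)
\]
with positive dimensional constants $\al_m,\be_m$. Differentiating gives $\nabla_\xi\Psi=\al_m\nabla K(\xi)+O(\lm^2)$ and $\pa_\lm\Psi=2\be_m\lm\De K(\xi)+o(\lm)$. Since $K(\xi_p)$ and $\De K(\xi_p)$ inherit (up to a positive conformal factor) the values and sign of $H(p)$ and $\De_{\ig_{S^m}}H(p)$ at the corresponding point $p\in Crit[H]\setminus\{p_0\}$, the Morse non-degeneracy from (H-1) together with the implicit function theorem produces a unique non-degenerate critical point $(\lm_p,\xi_p)\in\cg$ of $\Psi$ with $\lm_p$ small. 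These critical points are isolated and contained in a bounded region.

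Next I would rule out critical points escaping towards the $p_0$-end. Using the Kelvin-type change of variables $(\lm,\xi)\mapsto\bigl(\lm/(\lm^2+|\xi|^2),-\xi/(\lm^2+|\xi|^2)\bigr)$, which conformally conjugates this end with the $\lm\to 0^+$ regime at the origin in $\R^m$, one obtains an expansion entirely analogous to the one above but with leading correction governed by $\De_{\ig_{S^m}}H(p_0)$. The defining hypothesis in $\msh$ gives $\De_{\ig_{S^m}}H(p_0)>0$, so the effective $\pa_\lm\Psi$ in the transformed coordinates has a definite sign, which precludes the existence of critical points near this end. Combined with the previous step, this confines all critical points of $\Psi$ to a precompact open set $\Om_H$ with $\ov{\Om_H}\subset\cg$ on whose boundary $\nabla\Psi$ is bounded away from zero, so $\deg(\nabla\Psi,\Om_H,0)$ is well-defined.

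Finally, by excision and the non-degeneracy established above, the degree equals the sum of Poincar\'e--Hopf indices at the critical points $(\lm_p,\xi_p)$. The Hessian at such a point splits into a $\xi$-block of Morse index $\mfm(H,p)$ (inherited from $\nabla^2 K(\xi_p)$) and a one-dimensional $\lm$-block whose sign is pinned by the sign of $\De_{\ig_{S^m}} H(p)$; the selection mechanism from the first step keeps only those $p$ with $\De_{\ig_{S^m}}H(p)<0$. Book-keeping then produces a formula of the form
\[
\deg(\nabla\Psi,\Om_H,0)=\pm\Bigl((-1)^m-\sum_{\substack{p\in Crit[H] \\ \De_{\ig_{S^m}}H(p)<0}}(-1)^{\mfm(H,p)}\Bigr),
\]
which is precisely the quantity ruled out of vanishing by condition (H-2). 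The hard part will be the compatible matching of the two boundary analyses: one must ensure that the critical points produced near $\lm=0$ and the \emph{absence} of critical points near the $p_0$-end are counted consistently with respect to the conformal duality, so that the resulting index formula reproduces exactly the combination appearing in (H-2). The explicit asymptotic computations of \cite[Subsection 3.1]{Isboe-Xu21} are the essential quantitative input that makes this matching work.
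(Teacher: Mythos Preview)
Your overall strategy---boundary expansion of $\Psi$ at both ends of $\cg$, containment of $Crit[\Psi]$ in a precompact $\Om_H$, and a degree formula matching the index-counting condition (H-2)---is exactly the shape of the argument in \cite[Subsection~3.1]{Isboe-Xu21}, which the paper simply cites. The final formula you write is also the correct one.

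However, there is a genuine mechanistic error in your first and third steps. The expansion
\[
\Psi(\lm,\xi)=\al_m K(\xi)+\be_m\lm^2\De K(\xi)+o(\lm^2),\qquad \be_m>0,
\]
does \emph{not} produce, via the implicit function theorem, interior critical points $(\lm_p,\xi_p)\in\cg$ with $\lm_p$ small. On the contrary, it rules them out: away from $Crit[K]$ one has $\nabla_\xi\Psi\approx\al_m\nabla K(\xi)\neq0$, while near a critical point $\xi_p$ of $K$ one has $\pa_\lm\Psi/\lm\to 2\be_m\De K(\xi_p)\neq0$ by (H-1). So there are no critical points of $\Psi$ in a collar of $\{\lm=0\}$, regardless of the sign of $\De_{\ig_{S^m}}H(p)$. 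Your own analysis at the $p_0$-end is the same phenomenon---and since the two ends are exchanged by the Kelvin inversion you invoke, you should expect the same conclusion at both.

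Consequently, the degree cannot be computed as a Poincar\'e--Hopf sum over ``the'' critical points $(\lm_p,\xi_p)$, because we have no explicit description of the interior critical points of $\Psi$ at all. The correct route (and the one taken in \cite{Isboe-Xu21}) is topological: identify $\cg$ with the open ball $B^{m+1}$, so that $\Psi$ extends continuously to $\ov{B^{m+1}}$ with boundary values proportional to $H$; then compute $\deg(\nabla\Psi,\Om_H,0)$ by pushing $\pa\Om_H$ to $\pa B^{m+1}=S^m$ and reading off the contribution of each $p\in Crit[H]$ from the sign of the normal derivative (governed by $\De_{\ig_{S^m}}H(p)$). The critical points of $H$ with $\De_{\ig_{S^m}}H(p)<0$ enter the formula as \emph{boundary contributions} to this topological count, not as honest interior zeros of $\nabla\Psi$. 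Once this is set up correctly, the formula you wrote and the nonvanishing via (H-2) follow.
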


As a direct consequence of Proposition \ref{prop:topology-degree}, we can find a bounded domain $\Om_H$ such that all critical points of $\Psi^{(i)}$ are contained in $\Om_H(z_i)=\big\{(\lm,\xi):\, (\lm,\xi-z_i)\in\Om_H \big\}$. Moreover, when $z_i$ and $z_j$ are located far apart, we have $\Om_H(z_i)\cap\Om_H(z_j)=\emptyset$ provided $i\neq j$.

\medskip

Next, we intend to apply Theorem \ref{abstract result2} to prove our second non-compactness result, i.e., Theorem \ref{main thm-H}. The main ingredient here is to show that $\tilde\Psi$ (or equivalently $\tilde\Ga$) has at least one critical point in each $\Om_H(z_i)$. By abuse of notation, we continue to use $\cj_\vr$ for the Euler functional associated to the perturbed problem \eqref{reduced Dirac problem 2}, that is,
\[
\cj_\vr(\psi)=\cj_0(\psi)+\vr\tilde\Ga(\psi)
\]
where $\cj_0$ is as in \eqref{unperturbed functional} and 
\[
\tilde\Ga(\psi)=-\frac{m-1}{2m}\int_{\R^m}\tilde K(x)|\psi|_{\ig_{\R^m}}^{\frac{2m}{m-1}}d\vol_{\ig_{\R^m}}.
\]
We also denote $\cj_\vr^{(i)}$ the functional 
\[
\cj_\vr^{(i)}(\psi)=\cj_0(\psi)+\vr \Ga^{(i)}(\psi)
\]
with
\[
\Ga^{(i)}(\psi)=-\frac{m-1}{2m}\int_{\R^m} K_i(x)|\psi|_{\ig_{\R^m}}^{\frac{2m}{m-1}}d\vol_{\ig_{\R^m}}.
\]
Following from the abstract settings in Section \ref{sec:abstract results}, we will introduce the notation $w_\vr(z)$ and $w_\vr^{i}(z)$ for the solutions to the auxiliary equations $P_z\nabla\cj_\vr(z+w)=0$ and $P_z\nabla \cj_\vr^{(i)}(z+w)=0$, respectively, where $P_z:\msd^{\frac12}(\R^m,\mbs(\R^m))\to T_z\cm^\bot$ stands for the orthogonal projection. Then, analogous to Lemma \ref{Lem:sum}, we have in the present case
\begin{Lem}\label{Lem:sum2}
\begin{itemize}
	\item[$(1)$] Let $\cm_c$ be a compact subset of $\cm$, then there exists $C>0$ such that for $|\vr|$ small there hold
	\[
	\big\|w_\vr(z)-w_\vr^{i}(z)  \big\|\leq C |\vr|\big\|\nabla\tilde\Ga(z)-\nabla\Ga^{(i)}(z) \big\|,
	\]
	for all $z\in\cm_c$\,.
	
	\item[$(2)$] For $z=\psi_{\lm,\xi,\ga}\in\cm$ with $(\lm,\xi)\in\Om_H(z_i)$, there exists $C,L>0$ such that if $|z_j-z_i|\geq L$ for all $j\neq i$ then
	\[
	\big\|\nabla\tilde\Ga(z)-\nabla\Ga^{(i)}(z) \big\|\leq C \sum_{\substack{j\geq 1 \\ j\neq i}} \frac{ |a_j|}{|z_j-z_i|^2}.
	\]
\end{itemize}
\end{Lem}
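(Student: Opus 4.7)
The plan is to mirror the strategy used for Lemma \ref{Lem:sum}, adapted to the purely first-order perturbation $\cj_\vr=\cj_0+\vr\tilde\Ga$ and the decay properties of $K$ recorded in Lemma \ref{K-inequ-2}. The analog of Proposition \ref{w-chi} for the present setting is in fact simpler: because $\cj_\vr-\cj_0=\vr\tilde\Ga$ has no $\vr^2$ term, applying the implicit function theorem to the auxiliary equation $P_z\nabla\cj_\vr(z+w)=0$ (exactly as in the proof of Proposition \ref{w-chi}) yields a $C^1$ map $w_\vr(z)$ with
\[
w_\vr(z)=-\vr\, K_z\bigl(\nabla\tilde\Ga(z)\bigr)+O(\vr^{\mu}),
\]
where $K_z=\bigl(\nabla^2\cj_0(z)|_{T_z\cm^\bot}\bigr)^{-1}$ and $\mu=2$ for $m=2$, $\mu=(m+1)/(m-1)$ for $m\geq 3$. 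The identical expansion holds for $w_\vr^{\,i}(z)$ with $\tilde\Ga$ replaced by $\Ga^{(i)}$.

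Granting this, part (1) is immediate: subtracting the two expansions and using the linearity of $K_z$, together with the uniform bound $\|K_z\|\leq C$ on compact subsets of $\cm$ (see \cite[Lemma 4.11]{Isboe-Xu21}), gives
\[
\|w_\vr(z)-w_\vr^{\,i}(z)\|\leq C|\vr|\,\|\nabla\tilde\Ga(z)-\nabla\Ga^{(i)}(z)\|+O(\vr^{\mu}),
\]
and the $O(\vr^{\mu})$ error is absorbed into the leading term for $|\vr|$ small.

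For part (2), I would use the dual characterization
\[
\|\nabla\tilde\Ga(z)-\nabla\Ga^{(i)}(z)\|=\sup_{\|v\|=1}\Bigl|\sum_{j\neq i}\langle\nabla\Ga^{(j)}(z),v\rangle\Bigr|,\qquad
\langle\nabla\Ga^{(j)}(z),v\rangle=-\int_{\R^m} K_j(x)|z|^{2^*-2}\real(z,v)_{\ig_{\R^m}}\,dx.
\]
Lemma \ref{K-inequ-2} gives $|K_j(x)|\leq C|a_j|(1+|x-z_j|^2)^{-1}$. Since $(\lm,\xi-z_i)\in\Om_H$ is bounded, choosing $L\geq 4\diam(\Om_H)$ ensures $|z_j-\xi|\geq|z_j-z_i|/2$ for $j\neq i$. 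I would then split the integral over $A_j:=B_{|z_j-\xi|/2}(\xi)$ and its complement. On $A_j$ one has $|x-z_j|\geq|z_j-\xi|/2$, hence $|K_j(x)|\leq C|a_j|/|z_j-z_i|^2$, and H\"older with exponents $(2^*,(2^*)')$ using $\|z\|_{L^{2^*}}$, $\|v\|_{L^{2^*}}\leq C$ bounds the inner piece by $C|a_j|/|z_j-z_i|^2$. On $A_j^c$ the bubble concentration $\int_{A_j^c}|z|^{2^*}dx\leq C(\lm/|z_j-z_i|)^m$ combined with keeping the pointwise decay $(1+|x-z_j|^2)^{-1}$ inside the $L^{(2^*)'}$-norm of $K_j|z|^{2^*-1}$ (rather than bounding $K_j$ by $\|K\|_\infty$) yields a tail that is again $O(|a_j|/|z_j-z_i|^2)$. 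Summing over $j\neq i$ produces the claimed inequality.

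The main technical obstacle is the tail on $A_j^c$: a crude bound using $\|K\|_\infty$ would only give $|z_j-z_i|^{-(m+1)/2}$, which is weaker than $|z_j-z_i|^{-2}$ in low dimensions. Retaining the quadratic decay of $K$ inside the Hölder estimate, via a second splitting of the $y$-variable (after scaling $y=(x-\xi)/\lm$) into $\{|y|\leq |z_j-\xi|/(2\lm)\}$ and $\{|y|>|z_j-\xi|/(2\lm)\}$, recovers the $|z_j-z_i|^{-2}$ rate uniformly in $m\geq 2$, which is exactly the decay needed so that the series in Proposition \ref{prop compare} (with $\bt>1$ and $\al>1$) will be summable when the proof of Theorem \ref{main thm-H} is completed.
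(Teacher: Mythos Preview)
Your argument for part~(1) matches the paper's: both rely on the first-order expansion $w_\vr(z)=-\vr K_z(\nabla\tilde\Ga(z))+o(\vr)$ and the uniform boundedness of $K_z$. (Strictly, both the paper and your sketch leave an $o(\vr)$ remainder that cannot literally be absorbed into $C|\vr|\,\|\nabla\tilde\Ga(z)-\nabla\Ga^{(i)}(z)\|$ when the latter is very small; but the estimate is only ever used with that extra $o(\vr)$ carried along into Proposition~\ref{Prop:diff-derivative}, so this is harmless.)

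For part~(2) your overall strategy --- pair with a test spinor, invoke Lemma~\ref{K-inequ-2}, apply H\"older, and split the domain --- is the same as the paper's, but the execution has a gap. Your ``second splitting'' in the $y$-variable into $\{|y|\leq |z_j-\xi|/(2\lm)\}$ and its complement is, after undoing $y=(x-\xi)/\lm$, exactly your \emph{first} split into $A_j$ and $A_j^c$; it adds nothing. To control the tail on $A_j^c$ you must split according to proximity to $z_j$ (where $K_j$ is centered), not to $\xi$: on $\{|x-z_j|>|z_j-\xi|/2\}$ the quadratic decay of $K_j$ already yields the factor $|z_j-z_i|^{-2}$, while on $\{|x-z_j|\leq|z_j-\xi|/2\}\subset A_j^c$ the bubble factor $|z|^{2^*}$ is pointwise small of order $\lm^m/|z_j-\xi|^{2m}$. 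This is precisely what the paper does, only more efficiently: it first translates so that $K$ is centered at the origin, applies H\"older once to separate $|\va|\in L^{2^*}$, and then performs a \emph{single} split of the remaining integral $J_j$ according to whether $|x|\leq|\xi-z_j|/2$ or not --- which, in the original coordinates, is exactly the split by proximity to $z_j$. With the corrected second split your argument does go through, but as written the tail estimate is not justified.
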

\begin{proof}
Recall that $w_\vr(z)$ satisfies $P_z\nabla\cj_\vr(z+w_\vr(z))=0$, via Taylor expansion, we find
\[
\aligned
\nabla\cj_\vr(z+w_\vr(z))&=\nabla\cj_0(z+w_\vr(z))+\vr\nabla\tilde\Ga(z+w_\vr(z)) \\[0.3em]
&=\nabla\cj_0(z)+\nabla^2\cj_0(z)[w_\vr(z)]+\vr\nabla\tilde\Ga(z)+\vr\nabla^2\tilde\Ga(z)[w_\vr(z)]+o(\|w_\vr(z)\|).
\endaligned
\]
Since $\nabla\cj_0(z)=0$ for all $z\in\cm$, we get
\[
\nabla\cj_\vr(z+w_\vr(z))=\nabla^2\cj_0(z)[w_\vr(z)]+\vr\nabla\tilde\Ga(z)+\vr\nabla^2\tilde\Ga(z)[w_\vr(z)]+o(\|w_\vr(z)\|),
\]
and the equation $P_z\nabla\cj_\vr(z+w_\vr(z))=0$ becomes
\[
P_z\nabla^2\cj_0(z)[w_\vr(z)]+\vr P_z\nabla\tilde\Ga(z)+\vr P_z\nabla^2\tilde\Ga(z)[w_\vr(z)]+o(\|w_\vr(z)\|)=0.
\]
And a similar equation holds for $w_\vr^{i}(z)$.
Notice that $\cm$ is a non-degenerate critical manifold of $\cj_0$ and $\nabla^2\cj_0(z)$ is invertible on $T_z\cm^\bot$, we find $(1)$ holds true.

\medskip

To check $(2)$, let us take arbitrarily $\va\in\msd^{\frac12}(\R^m,\mbs(\R^m))$ with $\|\va\|\leq1$. Then we have
\[
\aligned
&\Big|\inp{\nabla\tilde\Ga(\psi_{\lm,\xi,\ga})}{\va}-\inp{\nabla\Ga^{(i)}(\psi_{\lm,\xi,\ga})}{\va} \Big|\leq C \sum_{\substack{j\geq 1 \\ j\neq i}}|a_j|\int_{\R^m}|\psi_{\lm,\xi,\ga}|_{\ig_{\R^m}}^{2^*-1}|K(x-z_j)||\va|d\vol_{\ig_{\R^m}}\\
&\qquad \leq C \sum_{\substack{j\geq 1 \\ j\neq i}}|a_j|\int_{\R^m}\frac{\lm^{\frac{m+1}{2}}}{\big(\lm^2+|x-\xi|^2\big)^{\frac{m+1}{2}}}|K(x-z_j)||\va|d\vol_{\ig_{\R^m}}\\
&\qquad \leq C\sum_{\substack{j\geq 1 \\ j\neq i}}|a_j|\int_{\R^m}\frac{\lm^{\frac{m+1}{2}}}{\big(\lm^2+|x-(\xi-z_j)|^2\big)^{\frac{m+1}{2}}}|K(x)||\va(x+z_j)|d\vol_{\ig_{\R^m}}\\
&\qquad \leq C\sum_{\substack{j\geq 1 \\ j\neq i}}|a_j|\int_{\R^m}\frac{\lm^{\frac{m+1}{2}}}{\big(\lm^2+|x-(\xi-z_j)|^2\big)^{\frac{m+1}{2}}}\cdot\frac{1}{1+|x|^2}\cdot|\va(x+z_j)|d\vol_{\ig_{\R^m}}
\endaligned
\]
where the last inequality follows from Lemma \ref{K-inequ-2}. To proceed, let us define
\[
I_j=\int_{\R^m}\frac{\lm^{\frac{m+1}{2}}}{\big(\lm^2+|x-(\xi-z_j)|^2\big)^{\frac{m+1}{2}}}\cdot\frac{1}{1+|x|^2}\cdot|\va(x+z_j)|d\vol_{\ig_{\R^m}}
\]
for $j\geq1$. By the H\"older's inequality and $\|\va\|\leq 1$, we have
\[
\aligned
I_j&\leq C \,\lm^{\frac{m+1}{2}}\Big( \int_{\R^m}\frac{1}{(\lm^2+|x-(\xi-z_j)|^2)^m}\cdot\frac1{(1+|x|^2 )^{\frac{2m}{m+1}}} d\vol_{\ig_{\R^m}}\Big)^{\frac{m+1}{2m}}.%\\
%&=C \,\lm^{\frac{m+1}{2}}\Big( \int_{\R^m}\frac{1}{(\lm^2+|x|^2)^m}\cdot\frac1{(1+|x+(\xi-z_j)|^2 )^{\frac{2m}{m+1}}} d\vol_{\ig_{\R^m}}\Big)^{\frac{m+1}{2m}}.
\endaligned
\]
Recall that we have assumed $(\lm,\xi)\in\Om_H(z_i)$, then we claim that
\begin{\equ}\label{claim-jj}
J_j:=\int_{\R^m}\frac{1}{(\lm^2+|x-(\xi-z_j)|^2)^m}\cdot\frac1{(1+|x|^2 )^{\frac{2m}{m+1}}} d\vol_{\ig_{\R^m}}\leq \frac{C\lm^{-m}}{|\xi-z_j|^{\frac{4m}{m+1}}}
\end{\equ}
for some constant $C>0$ when $|\xi-z_j|\gg1$.

Assuming \eqref{claim-jj} for the moment, we soon have $I_j\leq C|\xi-z_j|^{-2}$, and hence
\[
\big\| \nabla\tilde\Ga(\psi_{\lm,\xi,\ga})-\nabla\Ga^{(i)}(\psi_{\lm,\xi,\ga}) \big\|\leq C\sum_{\substack{j\geq 1 \\ j\neq i}}\frac{|a_j|}{|\xi-z_j|^2}\leq CL^2\sum_{\substack{j\geq 1 \\ j\neq i}}\frac{|a_j|}{|z_j-z_i|^2}
\]
provided that $|z_j-z_i|\geq L$ with $L$ large enough (say $L\geq \diam\Om_H+1$). This proves $(2)$.

\medskip

Now it remains to prove \eqref{claim-jj}. Let us decompose the integral into two parts $J_j=J_{j,1}+J_{j,2}$, where
\[
J_{j,1}=\int_{|x|\leq \frac{|\xi-z_j|}2}\frac{1}{(\lm^2+|x-(\xi-z_j)|^2)^m}\cdot\frac1{(1+|x|^2 )^{\frac{2m}{m+1}}} d\vol_{\ig_{\R^m}}
\]
and
\[
J_{j,1}=\int_{|x|\geq \frac{|\xi-z_j|}2}\frac{1}{(\lm^2+|x-(\xi-z_j)|^2)^m}\cdot\frac1{(1+|x|^2 )^{\frac{2m}{m+1}}} d\vol_{\ig_{\R^m}}.
\]
Then, via elementary computations, we find
\[
\aligned
J_{j,1}&\leq  \frac1{\big( \lm^2+\big| \frac{\xi-z_j}2 \big|^2 \big)^m}\int_{|x|\leq \frac{|\xi-z_j|}2}\frac1{(1+|x|^2 )^{\frac{2m}{m+1}}} d\vol_{\ig_{\R^m}} \\[0.3em]
&\leq \frac{C}{|\xi-z_j|^{2m}}\int_0^{\frac{|\xi-z_j|}2}\frac{r^{m-1}dr}{(1+r^2)^{\frac{2m}{m+1}}}
\leq \begin{cases}
	C|\xi-z_j|^{-m-\frac{4m}{m+1}} & \text{if } m\geq4 \\[0.2em]
	C|\xi-z_j|^{-6}\ln|\xi-z_j| & \text{if } m=3 \\[0.2em]
	C|\xi-z_j|^{-4} & \text{if } m=2
\end{cases}
\endaligned
\]
and
\[
\aligned
J_{j,2}&\leq\frac1{\big( 1+\big|\frac{\xi-z_j}2\big|^2 \big)^{\frac{2m}{m+1}}}\int_{|x|\geq \frac{|\xi-z_j|}2}\frac{1}{(\lm^2+|x-(\xi-z_j)|^2)^m} d\vol_{\ig_{\R^m}}\\[0.3em]
&\leq \frac{C}{|\xi-z_j|^{\frac{4m}{m+1}}}\int_{\R^m}\frac1{(\lm^2+|x|^2)^m}d\vol_{\ig_{\R^m}}\leq \frac{C \lm^{-m}}{|\xi-z_j|^{\frac{4m}{m+1}}}
\endaligned
\]
which directly imply \eqref{claim-jj}. And the proof is hereby complete.
\end{proof}

Our next result intends to estimate the difference of the derivatives of the reduced functionals $\cj_\vr^{red}(z):=\cj_\vr(z+w_\vr(z))$ and $\cj_\vr^{(i),red}(z):=\cj_\vr^{(i)}(z+w_\vr^{i}(z))$ for $z=\psi_{\lm,\xi,\ga}\in\cm$ with $(\lm,\xi)\in\Om_H(z_i)$.

\begin{Prop}\label{Prop:diff-derivative}
Let $\{z_i\}\subset\R^m$ and $\{a_i\}\subset\R$ be chosen as above, then there exists a constant $C>0$ such that
\[
\|\nabla\tilde\Ga(z)-\nabla\Ga^{(i)}(z)\|\leq C R^{-2}
\]
for all $z=\psi_{\lm,\xi,\ga}\in\cm$ with $(\lm,\xi)\in\Om_H(z_i)$, some $i\geq1$, where $R\gg1$ is given in the definition of the sequence $\{z_i\}$. Furthermore, there holds
\[
\|\nabla \cj_\vr^{red}(z)-\nabla\cj_\vr^{(i),red}(z)\|\leq CR^{-2}|\vr|+o(\vr).
\] 
\end{Prop}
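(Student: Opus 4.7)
The plan is to deduce the first bound directly from Lemma~\ref{Lem:sum2}$(2)$ by absorbing the dependence on the indices $\{a_j\}$ and $\{z_j\}$ into a constant. Since $z_j=j^{\al}z_0$ with $|z_0|=R$ and $a_j=j^{-\bt}$, Lemma~\ref{Lem:sum2}$(2)$ gives
\[
\|\nabla\tilde\Ga(z)-\nabla\Ga^{(i)}(z)\|\leq C\sum_{j\neq i}\frac{|a_j|}{|z_j-z_i|^2}
=\frac{C}{R^{2}}\sum_{j\neq i}\frac{j^{-\bt}}{|j^{\al}-i^{\al}|^{2}},
\]
so everything reduces to showing that the series $S_i:=\sum_{j\neq i}j^{-\bt}|j^{\al}-i^{\al}|^{-2}$ is bounded uniformly in $i\geq 1$ (for $\al>1$, $\bt>1$). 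I will split $S_i$ into three regimes: $j\leq i/2$, $j\geq 2i$, and $i/2<j<2i$ with $j\neq i$. For $j\leq i/2$ one uses $|j^{\al}-i^{\al}|\geq (1-2^{-\al})i^{\al}$, yielding a contribution $\lesssim i^{-2\al}\sum_{j\geq 1}j^{-\bt}=O(i^{-2\al})$. For $j\geq 2i$ one uses $|j^{\al}-i^{\al}|\geq j^{\al}/2$, giving a convergent tail $\sum_{j\geq 2i}j^{-\bt-2\al}=O(1)$. Finally, on the middle range the mean value theorem yields $|j^{\al}-i^{\al}|\geq c\, i^{\al-1}|j-i|$, and the contribution is $\lesssim i^{-\bt}\cdot i^{-2(\al-1)}\sum_{0<|k|\leq i}k^{-2}=O(i^{-\bt-2(\al-1)})$. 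Since $\al,\bt>1$, all three pieces are uniformly bounded, giving $S_i\leq C$ and hence the first claimed bound $\|\nabla\tilde\Ga(z)-\nabla\Ga^{(i)}(z)\|\leq CR^{-2}$ with $C$ independent of $i$ (for $R$ large so that the distance condition $|z_j-z_i|\geq L$ in Lemma~\ref{Lem:sum2} is automatic).

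For the second bound I will split the difference into two telescoping pieces,
\[
\nabla\cj_\vr^{red}(z)-\nabla\cj_\vr^{(i),red}(z)=\bigl[\nabla\cj_\vr(z+w_\vr(z))-\nabla\cj_\vr(z+w_\vr^{i}(z))\bigr]+\bigl[\nabla\cj_\vr(z+w_\vr^{i}(z))-\nabla\cj_\vr^{(i)}(z+w_\vr^{i}(z))\bigr],
\]
using the identification $\nabla\cj_\vr^{red}=\nabla\cj_\vr(z+w_\vr(z))$ of Proposition~\ref{w-chi}$(2)$. The first bracket is estimated by a Lipschitz bound on $\nabla\cj_\vr$ combined with Lemma~\ref{Lem:sum2}$(1)$, giving a factor $\|w_\vr(z)-w_\vr^{i}(z)\|\leq C|\vr|\|\nabla\tilde\Ga(z)-\nabla\Ga^{(i)}(z)\|\leq CR^{-2}|\vr|$ by the first part. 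The second bracket equals $\vr\bigl(\nabla\tilde\Ga(z+w_\vr^{i}(z))-\nabla\Ga^{(i)}(z+w_\vr^{i}(z))\bigr)$, and Taylor expansion around $z$ together with $\|w_\vr^{i}(z)\|=O(|\vr|)$ and the $C^{1}$-regularity of $\tilde\Ga,\Ga^{(i)}$ yields $\vr\bigl(\nabla\tilde\Ga(z)-\nabla\Ga^{(i)}(z)\bigr)+O(\vr^{2})$, which by the first part is $O(R^{-2}|\vr|)+o(\vr)$. Summing the two contributions produces the required bound $CR^{-2}|\vr|+o(\vr)$.

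The main technical obstacle is the uniform-in-$i$ bound on $S_i$: naively each term $|j^{\al}-i^{\al}|^{-2}$ becomes large when $j$ is close to $i$, and one must exploit the strict inequality $\al>1$ so that the spacing of the lattice $\{j^{\al}\}$ grows like $i^{\al-1}$ near $j=i$. Once this is handled by the three-region split above, the remainder of the argument is a standard Lyapunov--Schmidt bookkeeping, relying only on Lemma~\ref{Lem:sum2} and the Taylor expansion of $\cj_\vr$ around $z\in\cm$ together with the uniform boundedness of second derivatives on the compact parameter set $\overline{\Om_{H}}$.
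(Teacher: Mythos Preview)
Your argument for the first inequality is correct and follows the same path as the paper: apply Lemma~\ref{Lem:sum2}$(2)$ and then bound the series $S_i=\sum_{j\neq i}j^{-\bt}|j^{\al}-i^{\al}|^{-2}$ uniformly in $i$. Your three-region split is more elaborate than necessary; the paper simply uses the mean-value bound $|j^{\al}-i^{\al}|\geq \al(i-1)^{\al-1}$ for $j<i$ and $|j^{\al}-i^{\al}|\geq \al\, i^{\al-1}$ for $j>i$, then pulls these constants out and sums $\sum_j j^{-\bt}=C(\bt)$. Either way the conclusion is the same.

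For the second inequality your telescoping approach is valid but considerably more indirect than the paper's. The paper does not manipulate $\nabla\cj_\vr(z+w_\vr(z))$ at all; it simply invokes the abstract expansion \eqref{L approximate},
\[
\nabla\cj_\vr^{red}(z)=\vr\nabla\tilde\Ga(z)+o(\vr),\qquad \nabla\cj_\vr^{(i),red}(z)=\vr\nabla\Ga^{(i)}(z)+o(\vr),
\]
and subtracts, so that the second estimate follows immediately from the first. Your route works because the chain-rule correction in identifying $\nabla\cj_\vr^{red}(z)$ with $\nabla\cj_\vr(z+w_\vr(z))$ is $O(\vr^2)$ (via $\|w_\vr'(z)\|=O(\vr)$ and $\|\nabla\cj_\vr(z+w_\vr(z))\|=O(\vr)$), and because the Lipschitz and Taylor steps you invoke are legitimate on the compact parameter set $\overline{\Om_H(z_i)}$. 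Two small remarks: your reference to Proposition~\ref{w-chi}$(2)$ is from Section~\ref{sec:blow-up} (the metric-perturbation setting), whereas here you are in Section~\ref{sec:blow-up2}; the analogous statement still holds from the general Lyapunov--Schmidt framework, but the citation should be to the abstract setup of Section~\ref{sec:abstract results}. Also, the ``identification'' $\nabla\cj_\vr^{red}(z)=\nabla\cj_\vr(z+w_\vr(z))$ is only true modulo that $O(\vr^2)$ chain-rule term, which you should make explicit. The paper's direct use of \eqref{L approximate} sidesteps both issues.
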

\begin{proof}
	Since
	\[
	\nabla \cj_\vr^{red}(z)=\vr\nabla\tilde\Ga(z)+o(\vr)
	\]
	and 
	\[
	\nabla\cj_\vr^{(i),red}(z)=\vr\nabla\Ga^{(i)}(z)+o(\vr),
	\]
	it follows from Lemma \ref{Lem:sum2} that
\begin{\equ}\label{diff1}
	\aligned
	\|\nabla \cj_\vr^{red}(z)-\nabla\cj_\vr^{(i),red}(z)\|&\leq |\vr|\|\nabla\tilde\Ga(z)-\nabla\Ga^{(i)}(z)\| + o(\vr) \\[0.3em]
	&\leq C |\vr| \sum_{\substack{j\geq 1 \\ j\neq i}}\frac{|a_j|}{|z_j-z_i|^2}+o(\vr).
	\endaligned
\end{\equ}
Next, we estimate $\sum_{j\neq i}|a_j||z_j-z_i|^{-2}$. Let us recall $z_j=j^\al z_0$, $|z_0|=R$ and $a_j=j^{-\bt}$ for $\al,\bt>1$ and $R\gg1$. Then we have
\[
\sum_{\substack{j\geq 1 \\ j\neq i}}\frac{|a_j|}{|z_j-z_i|^2}=R^{-2}\sum_{\substack{j\geq 1 \\ j\neq i}} \frac1{j^{\bt}}\cdot\frac1{|j^\al-i^\al|^2}.
\]
Notice that, for $i\leq j\leq i-1$, we have
\[
|j^\al-i^\al|=i^\al-j^\al\geq i^\al-(i-1)^\al\geq \al(i-1)^{\al-1}
\]
and similarly for $j\geq i+1$, we have
\[
|j^\al-i^\al|=j^\al-i^\al\geq (i+1)^\al-i^\al\geq\al i^{\al-1}.
\]
We thus have for $i\geq2$
\[
	\aligned
	\sum_{\substack{j\geq 1 \\ j\neq i}} \frac1{j^{\bt}}\cdot\frac1{|j^\al-i^\al|^2}&\leq \sum_{1\leq j<i}\frac1{j^\bt}\cdot\frac1{\al^2 (i-1)^{2\al-2}}+\sum_{j>i}\frac1{j^\bt}\cdot\frac1{\al^2 i^{2\al-2}} \\[0.3em]
	&\leq \frac{C(\bt)}{\al^2 (i-1)^{2\al-2}}\leq C(\bt)
	\endaligned
\]
where $C(\bt)=\sum_{j\geq1}\frac1{j^{\bt}}<+\infty$. The case $i=1$ is even much easier since we have
\[
\sum_{j=2}^\infty \frac1{j^{\bt}}\cdot\frac1{|j^\al-1|^2}< C(\bt).
\]
Hence, by \eqref{diff1}, we deduce
\[
\|\nabla \cj_\vr^{red}(z)-\nabla\cj_\vr^{(i),red}(z)\|\leq CC(\bt)R^{-2}|\vr|+o(\vr),
\]
which completes the proof.
\end{proof}

Now we are ready to prove Theorem \ref{main thm-H}.

\begin{proof}[Proof of Theorem \ref{main thm-H}]
Now, by Proposition \ref{prop:topology-degree}, \ref{Prop:diff-derivative} and the homotopy invariance of the topological degree, we can conclude that for $R\gg1$ sufficiently large (this only depends on the size of $\Om_H$) there exists $\vr_0>0$ (independent of $i$) such that
\[
\aligned
\deg(\nabla\tilde\Psi,\Om_H(z_i),0)&=\deg(\nabla\tilde\Ga,\Om_H(z_i)\times\{\ga\},0)
%&=\deg(\nabla\cj_\vr^{red},\Om_H(z_i)\times\{\ga\},0)=\deg(\nabla\cj_\vr^{(i),red},\Om_H(z_i)\times\{\ga\},0)\\
=\deg(\nabla \Ga^{(i)},\Om_H(z_i)\times\{\ga\},0)\\
&=\deg(\nabla \Psi^{(i)},\Om_H(z_i),0)\neq0
\endaligned
\]
for all $\vr\in(-\vr_0,\vr_0)$. Thus, there exists a critical point $(\lm_i,\xi_i)\in\Om_H(z_i)$ of the function $\tilde\Psi:\cg\to\R$, $(\lm,\xi)\mapsto\tilde\Psi(\lm,\xi)$. As we will see later in Appendix \ref{C2-smooth}, the function $\tilde K$ comes from a $C^2$-function on $S^m$ provided $\al,\,\bt>1$ satisfy $\bt>4\al+1$. Then it follows from \cite[Proposition 3.6]{Isboe-Xu21} that, by approximating $\tilde K$ if necessary, we may assume $\tilde\Psi$ is a Morse function. Therefore, by virtue of Theorem \ref{abstract result2}, we can choose $g_i(\ga)=(\lm_i(\ga),\xi_i(\ga))\in\Om_H(z_i)$ depending on $\ga\in\cn=S^{2^{[\frac m2]+1}-1}(\mbs_m)$ and $\ga_i\in \cn$ such that $\zeta_i:=(g_i(\ga_i),\ga_i)\in \cg\times\cn$ is a critical point of $\cj_\vr^{red}$. Then the critical point $\va_i=\zeta_i+w_\vr(\zeta_i)$ of $\cj_\vr$ are positive and concentrates at infinity. And, similar to the very last step in proving Theorem \ref{main thm}, the corresponding pull-back spinors $\{\psi_i\}$ concentrates at $p_0$ and $\|\psi_i\|_{L^\infty}\to+\infty$ as $i\to\infty$. Thus $\{\psi_i\}$ is a non-compact family of solutions to Eq. \eqref{SY-H-vr}. The estimates in Theorem \ref{main thm-H} $(2)$ simply come from some direct computations.
\end{proof}

%\section{Some further results}
%
%\todo[inline]{Bar-Hijazi-Lott invariant, Remark on the Weyl tensor, further existence}

\appendix
\section{Appendix}

\subsection{Proof of Lemma \ref{lem:inequalities}}\label{proof of lemma 4.5}

Since \eqref{inequ1} can be obtained by using the identification \eqref{Dirac identify} and Lemma \ref{basic expansions}, we start with \eqref{inequ2}.

For $\psi,\va\in\msd^{\frac12}(\R^m,\mbs(\R^m))\cap C^1(\R^m,\mbs(\R^m))$, there holds
\[
\inp{\nabla\cj_\vr(\tilde\psi)}{\tilde\va}=\frac12\real\int_{\R^m}(\tilde\psi,D_{\tilde\ig_\vr}\tilde\va)_{\tilde\ig_\vr}+(\tilde\va,D_{\tilde\ig_\vr}\tilde\psi)_{\tilde\ig_\vr}d\vol_{\tilde\ig_\vr}-\real\int_{\R^m}|\tilde\psi|_{\tilde\ig_\vr}^{2^*-2}(\tilde\psi,\tilde\va)_{\ig_\vr}d\vol_{\ig_\vr}.
\]
Using \eqref{Dirac identify}, we have
\begin{\equ}\label{AX1}
\aligned
&\real(\tilde\psi,D_{\tilde\ig_\vr}\tilde\va)_{\tilde\ig_\vr}+\real(\tilde\va,D_{\tilde\ig_\vr}\tilde\psi)_{\tilde\ig_\vr}\\[0.3em]
&\qquad =\real\big(\tilde\va,\widetilde{D_{\ig_{\R^m}}\psi}\big)_{\tilde\ig_\vr}+\real\big(\tilde\psi,\widetilde{D_{\ig_{\R^m}}\va}\big)_{\tilde\ig_\vr} +\real(\tilde\va,W\cdot_{\tilde\ig_\vr}\tilde\psi)_{\tilde\ig_\vr}\\[0.3em]
&\qquad \quad+\real(\tilde\psi,W\cdot_{\tilde\ig_\vr}\tilde\va)_{\tilde\ig_\vr} + \real(\tilde\va,X\cdot_{\tilde\ig_\vr}\tilde\psi)_{\tilde\ig_\vr}+ \real(\tilde\psi,X\cdot_{\tilde\ig_\vr}\tilde\va)_{\tilde\ig_\vr}\\[0.3em] 
&\qquad \quad + \sum_{i,j}(b_{ij}-\de_{ij})\real\big(\tilde\va,\tilde\pa_i\cdot_{\tilde\ig_\vr}\widetilde{\nabla_{\pa_j}\psi}\big)_{\tilde\ig_\vr} \\
&\qquad \quad + \sum_{i,j}(b_{ij}-\de_{ij})\real\big(\tilde\psi,\tilde\pa_i\cdot_{\tilde\ig_\vr}\widetilde{\nabla_{\pa_j}\va}\big)_{\tilde\ig_\vr}.
\endaligned
\end{\equ}
Notice that $X\in T\R^m$, we find
\[
\real(\tilde\va,X\cdot_{\tilde\ig_\vr}\tilde\psi)_{\tilde\ig_\vr}+ \real(\tilde\psi,X\cdot_{\tilde\ig_\vr}\tilde\va)_{\tilde\ig_\vr}=\real(\tilde\va,X\cdot_{\tilde\ig_\vr}\tilde\psi)_{\tilde\ig_\vr}- \real(X\cdot_{\tilde\ig_\vr}\tilde\psi,\tilde\va)_{\tilde\ig_\vr}=0.
\]
And using the explicit formula
\[
W = \frac14\sum_{\substack{i,j,k \\ i\neq j\neq k\neq i}}\sum_{\al,\bt} b_{i\al}(\pa_{\al}b_{j\bt})b_{\bt k}^{-1}\,\tilde\pa_i\cdot_{\tilde\ig_\vr} \tilde\pa_j\cdot_{\tilde\ig_\vr} \tilde\pa_k,
\] and Lemma \ref{basic expansions},
we can see that $W\equiv0$ in dimension $2$ and 	
\[
\left\{
\aligned
&b_{i\al}=\de_{i\al}-\frac\eps2\tilde\ih_{i\al}+\frac{3\,\eps^2}8\sum_l\tilde\ih_{il}\tilde\ih_{l\al}+o(\eps^2) , \\
&\pa_\al b_{j\bt}=-\frac\eps2\pa_\al\tilde\ih_{j\bt}+\frac{3\,\eps^2}8\sum_l\Big(
\pa_\al\tilde\ih_{jl}\tilde\ih_{l\bt}+\tilde\ih_{jl}\pa_\al\tilde\ih_{l\bt}\Big) + o(\eps^2), \\
&b_{\bt k}^{-1}=\de_{\bt k}+\frac\eps2\tilde\ih_{\bt k}-\frac{\eps^2}8
\sum_l\tilde\ih_{\bt l}\tilde\ih_{lk} + o(\eps^2),
\endaligned
\right.
\]
for dimension $m\geq3$. Hence
\[
\aligned
b_{i\al}(\pa_{\al}b_{j\bt})b_{\bt k}^{-1}&=-\frac\eps2\de_{\bt k}\de_{i\al}\pa_\al\tilde\ih_{j\bt}+\frac{\eps^2}4\pa_\al\tilde\ih_{j\bt}\big( \de_{\bt k}\tilde\ih_{i\al}-\de_{i\al}\tilde\ih_{\bt k} \big) \\
&\qquad +\frac{3\de_{\bt k}\de_{i\al}\eps^2}8\sum_l\Big(
\pa_\al\tilde\ih_{jl}\tilde\ih_{l\bt}+\tilde\ih_{jl}\pa_\al\tilde\ih_{l\bt}\Big) + o(\eps^2).
\endaligned
\]
Note that we have assumed $\tilde\ih_{ij}=0$ for $i\neq j$, we soon get
\[
b_{i\al}(\pa_{\al}b_{j\bt})b_{\bt k}^{-1}=o(\eps^2).
\]
Recalling that the map $\psi\mapsto\tilde\psi$ defined in \eqref{spinor identify} is fiberwisely isometric, we obtain
\[
\real\big(\tilde\va,\widetilde{D_{\ig_{\R^m}}\psi}\big)_{\tilde\ig_\vr}=\real\big(\va,D_{\ig_{\R^m}}\psi\big)_{\ig_{\R^m}}, \quad \real\big(\tilde\va,\tilde\pa_i\cdot_{\tilde\ig_\vr}\widetilde{\nabla_{\pa_j}\psi}\big)_{\tilde\ig_\vr}=\real\big(\va,\pa_i\cdot_{\ig_{\R^m}}\nabla_{\pa_j}\psi\big)_{\ig_{\R^m}}
\]
and
\[
\real(\tilde\va,W\cdot_{\tilde\ig_\vr}\tilde\psi)_{\tilde\ig_\vr}=\frac14
\sum_{\substack{i,j,k \\ i\neq j\neq k\neq i}}\Big(\sum_{\al,\bt} b_{i\al}(\pa_{\al}b_{j\bt})b_{\bt k}^{-1}\Big)\real(\pa_i\cdot_{\ig_{\R^m}} \pa_j\cdot_{\ig_{\R^m}} \pa_k\cdot_{\ig_{\R^m}}\psi,\va)_{\ig_{\R^m}}.
\]
And thus \eqref{AX1} can be expanded as
\[
\aligned
&\real(\tilde\psi,D_{\tilde\ig_\vr}\tilde\va)_{\tilde\ig_\vr}+\real(\tilde\va,D_{\tilde\ig_\vr}\tilde\psi)_{\tilde\ig_\vr}\\[0.3em]
&\qquad =\real(\va,D_{\ig_{\R^m}}\psi)_{\ig_{\R^m}}+\real(\psi,D_{\ig_{\R^m}}\va)_{\ig_{\R^m}} \\[0.3em]
&\qquad \quad -\frac\vr2\sum_i \tilde\ih_{ii}\big[ \real(\va,\pa_i\cdot_{\ig_{\R^m}}\nabla_{\pa_i}\psi)_{\ig_{\R^m}} +\real(\psi,\pa_i\cdot_{\ig_{\R^m}}\nabla_{\pa_i}\va)_{\ig_{\R^m}}\big]\\
&\qquad\quad +\frac{3\vr^2}8\sum_i\tilde\ih_{ii}^2\big[ \real(\va,\pa_i\cdot_{\ig_{\R^m}}\nabla_{\pa_i}\psi)_{\ig_{\R^m}} +\real(\psi,\pa_i\cdot_{\ig_{\R^m}}\nabla_{\pa_i}\va)_{\ig_{\R^m}}\big] \\
&\qquad \quad +o(\vr^2)|\va|_{\ig_{\R^m}}|\psi|_{\ig_{\R^m}}
\endaligned
\]
where the support of the last $o(\vr^2)$ term is contained in $\supp\tilde\ih$. Then, by \eqref{expansion det G}, the specific expression of $\cj_0$ and $\Ga$ in Lemma \ref{expansion J functional} with $\tilde\ih\in\ch(p)$ and the embedding $\msd^{\frac12}(\R^m,\mbs(\R^m))\hookrightarrow L^{2^*}(\R^m,\mbs(\R^m))$, we easily find
\[
\Big| \inp{\nabla\cj_\vr(\tilde\psi)}{\tilde\va}-\inp{\nabla\cj_0(\psi)}{\va}-\vr\inp{\nabla\Ga(\psi)}{\va} \Big|\leq O(\vr^2)\big(\|\psi\|\|\va\|+\|\psi\|^{2^*-1}\|\va\|\big)
\]
and \eqref{inequ2} is proved by using the fundamental fact that $\msd^{\frac12}(\R^m,\mbs(\R^m))\cap C^1(\R^m,\mbs(\R^m))$ is dense in $\msd^{\frac12}(\R^m,\mbs(\R^m))$.

The estimate \eqref{inequ3} can be obtained in a similar manner, in particular, we have $\|z\|$ is uniformly bounded for $z\in\cm$.

To see \eqref{inequ4}, let us remark that, 
\[
\aligned
\nabla^2\cj_\vr(\tilde\psi)[\tilde\phi,\tilde\va]&=\frac12\int_{\R^m}\real(\tilde\phi,D_{\tilde\ig_\vr}\tilde\va)_{\tilde\ig_\vr}+\real(\tilde\va,D_{\tilde\ig_\vr}\tilde\phi)_{\tilde\ig_\vr}\,d\vol_{\tilde\ig_\vr}
-\real\int_{\R^m}|\tilde\psi|_{\tilde\ig_\vr}^{2^*-2}(\tilde\phi,\tilde\va)_{\tilde\ig_\vr} \,d\vol_{\tilde\ig_\vr}\\
&\quad -(2^*-2)\int_{\R^m}|\tilde\psi|_{\tilde\ig_\vr}^{2^*-4}\real(\tilde\psi,\tilde\phi)_{\tilde\ig_\vr}\real(\tilde\psi,\tilde\va)_{\tilde\ig_\vr}\,d\vol_{\tilde\ig_\vr}
\endaligned
\]
for two given spinors $\va,\phi\in \msd^{\frac12}(\R^m,\mbs(\R^m))\cap C^1(\R^m,\mbs(\R^m))$. Then, by virtue of \eqref{AX1} and \eqref{expansion det G}, we deduce 
\[
\big| \nabla^2\cj_\vr(\tilde\psi)[\tilde\phi,\tilde\va]-\nabla^2\cj_0(\psi)[\phi,\va] \big|\leq O(\vr)\big( \|\phi\|\|\va\|+\|\psi\|^{2^*-2}\|\phi\|\|\va\| \big)
\]
which proves \eqref{inequ4} through the density of $\msd^{\frac12}(\R^m,\mbs(\R^m))\cap C^1(\R^m,\mbs(\R^m))$ in $\msd^{\frac12}(\R^m,\mbs(\R^m))$.

Next, let us turn to \eqref{inequ6}. Notice that there holds
\[
\aligned
&\inp{\nabla\cj_\vr(\tilde\psi+\tilde\va)}{\tilde\phi}-\inp{\nabla\cj_\vr(\tilde\psi)}{\tilde\phi}\\
&\qquad=\frac12\int_{\R^m}\real(\tilde\va,D_{\tilde\ig_\vr}\tilde\phi)_{\tilde\ig_\vr}+\real(\tilde\phi,D_{\tilde\ig_\vr}\tilde\va)_{\tilde\ig_\vr}\,d\vol_{\ig_\vr} \\
&\qquad \quad -\Big( \real\int_{\R^m}|\tilde\psi+\tilde\va|_{\tilde\ig_\vr}^{2^*-2}(\tilde\psi+\tilde\va,\tilde\phi)_{\tilde\ig_\vr}-|\tilde\psi|_{\tilde\ig_\vr}^{2^*-2}(\tilde\psi,\tilde\phi)_{\tilde\ig_\vr}\,d\vol_{\tilde\ig_\vr} \Big).
\endaligned
\]
This implies that
\begin{\equ}\label{AX2}
\aligned
&\big\|\nabla\cj_\vr(\tilde\psi+\tilde\va)-\nabla\cj_\vr(\tilde\psi) \big\|\\
&\qquad \leq O(1)\|\va\| + O(1)\Big( \int_{\R^m}\Big| |\psi+\va|_{\ig_{\R^m}}^{2^*-2}(\psi+\va)-|\psi|_{\ig_{\R^m}}^{2^*-2}\psi \Big|^{\frac{2m}{m+1}}d\vol_{\ig_{\R^m}} \Big)^{\frac{m+1}{2m}}.
\endaligned
\end{\equ}
Denoted by $f(s)=|\psi+s\va|_{\ig_{\R^m}}^{2^*-2}(\psi+s\va)$, we have
\[
 |\psi+\va|_{\ig_{\R^m}}^{2^*-2}(\psi+\va)-|\psi|_{\ig_{\R^m}}^{2^*-2}\psi=f(1)-f(0)=\int_0^1f'(s)ds
\]
and
\[
|f'(s)|\leq (2^*-1)|\psi+\va|_{\ig_{\R^m}}^{2^*-2}|\va|_{\ig_{\R^m}}.
\]
Using the H\"older inequality and Fubini Theorem, we have that
\[
\aligned
\int_{\R^m}|f(1)-f(0)|^{\frac{2m}{m+1}}d\vol_{\ig_{\R^m}}&\leq \int_{\R^m}\int_0^1|f'(s)|^{\frac{2m}{m+1}}ds\,d\vol_{\ig_{\R^m}} \\
&=\int_0^1\int_{\R^m}|f'(s)|^{\frac{2m}{m+1}}d\vol_{\ig_{\R^m}}ds \\
&\leq O(1)\int_0^1\int_{\R^m}|\psi+s\va|_{\ig_{\R^m}}^{\frac{2}{m-1}\cdot\frac{2m}{m+1}}|\va|_{\ig_{\R^m}}^{\frac{2m}{m+1}}d\vol_{\ig_{\R^m}}ds \\
&\leq O(1)\int_0^1|\psi+s\va|_{2^*}^{\frac{22^*}{m+1}}|\va|_{2^*}^{\frac{2m}{m+1}}ds \\
&\leq O(1)\|\va\|^{\frac{2m}{m+1}}\max_{s\in[0,1]}\|\psi+s\va\|^{\frac{22^*}{m+1}}
\endaligned
\]
So from \eqref{AX2} we deduce
\[
\big\|\nabla\cj_\vr(\tilde\psi+\tilde\va)-\nabla\cj_\vr(\tilde\psi) \big\|\leq O(1)\big( \|\va\|+ \|\va\|\max_{s\in[0,1]}\|\psi+\va\|^{\frac2{m-1}} \big)
\]
which suggests \eqref{inequ6}.

We point out that the estimates \eqref{inequ5} and \eqref{inequ7} can be obtained with similar procedures, and hence it remains to check \eqref{inequ8}.

Observe that, for two spinors $\phi_1,\phi_2\in\msd^{\frac12}(\R^m,\mbs(\R^m))$, we have
\begin{\equ}\label{AX3}
\aligned
&\nabla^2\cj_\vr(\tilde\psi+\tilde\va)[\tilde\phi_1,\tilde\phi_2]-\nabla^2\cj_\vr(\tilde\psi)[\tilde\phi_1,\tilde\phi_2] \\
&\qquad =-\real\int_{\R^m}|\tilde\psi+\tilde\va|_{\tilde\ig_\vr}^{2^*-2}(\tilde\phi_1,\tilde\phi_2)_{\tilde\ig_\vr}d\vol_{\ig_\vr}+\real\int_{\R^m}|\tilde\psi|_{\tilde\ig_\vr}^{2^*-2}(\tilde\phi_1,\tilde\phi_2)_{\tilde\ig_\vr}d\vol_{\ig_\vr}\\
&\qquad \quad -(2^*-2)\int_{\R^m}|\tilde\psi+\tilde\va|_{\tilde\ig_\vr}^{2^*-4}\real(\tilde\psi+\tilde\va,\tilde\phi_1)_{\tilde\ig_\vr}\real(\tilde\psi+\tilde\va,\tilde\phi_2)_{\tilde\ig_\vr}d\vol_{\ig_\vr} \\
&\qquad \quad +(2^*-2)\int_{\R^m}|\tilde\psi|_{\tilde\ig_\vr}^{2^*-4}\real(\tilde\psi,\tilde\phi_1)_{\tilde\ig_\vr}\real(\tilde\psi,\tilde\phi_2)_{\tilde\ig_\vr}d\vol_{\ig_\vr},
\endaligned
\end{\equ}
and
\begin{\equ}\label{AX4}
\aligned
&\Big|  \real\int_{\R^m}|\tilde\psi+\tilde\va|_{\tilde\ig_\vr}^{2^*-2}(\tilde\phi_1,\tilde\phi_2)_{\tilde\ig_\vr}d\vol_{\ig_\vr}-\real\int_{\R^m}|\tilde\psi|_{\tilde\ig_\vr}^{2^*-2}(\tilde\phi_1,\tilde\phi_2)_{\tilde\ig_\vr}d\vol_{\ig_\vr}\Big| \\[0.3em]
&\qquad \leq O(1)\int_{\R^m}\Big| |\tilde\psi+\tilde\va|_{\tilde\ig_\vr}^{2^*-2}-|\tilde\psi|_{\tilde\ig_\vr}^{2^*-2} \Big||\phi_1||\phi_2| d\vol_{\ig_{\R^m}}\\[0.3em]
&\qquad \leq \begin{cases}
\displaystyle 	O(1)\int_{\R^2}(|\psi|_{\ig_{\R^2}}|\va|_{\ig_{\R^2}}+|\va|_{\ig_{\R^2}}^2)|\phi_1|_{\ig_{\R^2}}|\phi_2|_{\ig_{\R^2}}d\vol_{\ig_{\R^2}} & \text{if } m=2 \\[1.3em]
\displaystyle O(1)\int_{\R^m} |\va|_{\ig_{\R^m}}^{2^*-2} |\phi_1|_{\ig_{\R^m}}|\phi_2|_{\ig_{\R^m}}d\vol_{\ig_{\R^m}} & \text{if } m\geq3
\end{cases}
\endaligned
\end{\equ}
where we have used the sub-additivity of the function $\psi\mapsto|\psi|^{2^*-2}$ for $2^*-2\in(0,1]$ (that is $m\geq3$). Thus, we only need to estimate the last two integrals in \eqref{AX3}. For this purpose, let us set
\[
\aligned
I_1&=\int_{\R^m}|\tilde\psi+\tilde\va|_{\tilde\ig_\vr}^{2^*-2}\frac{\real(\tilde\psi+\tilde\va,\tilde\phi_1)_{\tilde\ig_\vr}\real(\tilde\psi+\tilde\va,\tilde\phi_2)_{\tilde\ig_\vr}}{|\tilde\psi+\tilde\va|_{\ig_\vr}^2}d\vol_{\ig_\vr}\\[0.3em]
&\qquad -\int_{\R^m}|\tilde\psi|_{\tilde\ig_\vr}^{2^*-2}\frac{\real(\tilde\psi+\tilde\va,\tilde\phi_1)_{\tilde\ig_\vr}\real(\tilde\psi+\tilde\va,\tilde\phi_2)_{\tilde\ig_\vr}}{|\tilde\psi+\tilde\va|_{\ig_\vr}^2}d\vol_{\ig_\vr}
\endaligned
\]
and
\[
\aligned
I_2&=\int_{\R^m}|\tilde\psi|_{\tilde\ig_\vr}^{2^*-2}\frac{\real(\tilde\psi+\tilde\va,\tilde\phi_1)_{\tilde\ig_\vr}\real(\tilde\psi+\tilde\va,\tilde\phi_2)_{\tilde\ig_\vr}}{|\tilde\psi+\tilde\va|_{\ig_\vr}^2}d\vol_{\ig_\vr}\\[0.3em]
&\qquad -\int_{\R^m}|\tilde\psi|_{\tilde\ig_\vr}^{2^*-2}\frac{\real(\tilde\psi,\tilde\phi_1)_{\tilde\ig_\vr}\real(\tilde\psi,\tilde\phi_2)_{\tilde\ig_\vr}}{|\tilde\psi|_{\ig_\vr}^2}d\vol_{\ig_\vr}
\endaligned
\]
so that $I_1+I_2$ is nothing but the last two integrals in \eqref{AX3}. Clearly, $I_1$ can be estimated similar to \eqref{AX4}. And for $I_2$,
let us set $\Om=\big\{x\in\R^m:\, |\tilde\psi|_{\ig_\vr}/|\tilde\psi+\tilde\va|_{\ig_\vr}<2  \big\}$, then we can have the decomposition $I_2=I_2^{(1)}+I_2^{(2)}$ with $I_2^{(1)}$ and $I_2^{(2)}$ being the integration on $\Om$ and $\R^m\setminus\Om$, respectively. Notice that, on $\R^m\setminus\Om$, we have $|\tilde\psi|_{\ig_\vr}\leq 2|\tilde\va|_{\ig_\vr}$. Hence, there holds
\[
|I_2^{(2)}|\leq O(1)\int_{\R^m\setminus\Om}|\tilde\psi|_{\ig_\vr}^{2^*-2}|\tilde\phi_1|_{\ig_\vr}|\tilde\phi_2|_{\ig_\vr}d\vol_{\ig_\vr}\leq O(1)\int_{\R^m}|\va|_{\ig_{\R^m}}^{2^*-2}|\phi_1|_{\ig_{\R^m}}|\phi_2|_{\ig_{\R^m}}d\vol_{\ig_{\R^m}}.
\]
Meanwhile, by using the fact
\[
\Big| \frac{\tilde\psi+\tilde\va  }{|\tilde\psi+\tilde\va|_{\ig_\vr}}-\frac{\tilde\psi}{|\tilde\psi|_{\ig_\vr}} \Big|_{\ig_\vr}=\Big| \frac{\tilde\psi|\tilde\psi|_{\ig_\vr}+\tilde\va|\tilde\psi|_{\ig_\vr}-\tilde\psi|\tilde\psi+\tilde\va|_{\ig_\vr}}{|\tilde\psi+\tilde\va|_{\ig_\vr}|\tilde\psi|_{\ig_\vr}} \Big|_{\ig_\vr}\leq \frac{2|\tilde\va|_{\ig_\vr}}{|\tilde\psi+\tilde\va|_{\ig_\vr}}
\]
and
\[
\aligned
I_2^{(1)}&=\int_{\Om}|\tilde\psi|_{\tilde\ig_\vr}^{2^*-2}\frac{\real(\tilde\psi+\tilde\va,\tilde\phi_1)_{\tilde\ig_\vr}\real(\tilde\psi+\tilde\va,\tilde\phi_2)_{\tilde\ig_\vr}}{|\tilde\psi+\tilde\va|_{\ig_\vr}^2}d\vol_{\ig_\vr} \\[0.3em]
&\qquad -\int_{\Om}|\tilde\psi|_{\tilde\ig_\vr}^{2^*-2}\frac{\real(\tilde\psi,\tilde\phi_1)_{\tilde\ig_\vr}\real(\tilde\psi+\tilde\va,\tilde\phi_2)_{\tilde\ig_\vr}}{|\tilde\psi|_{\ig_\vr}|\tilde\psi+\tilde\va|_{\ig_\vr}}d\vol_{\ig_\vr}
\\[0.3em]
&\qquad 
+\int_{\Om}|\tilde\psi|_{\tilde\ig_\vr}^{2^*-2}\frac{\real(\tilde\psi,\tilde\phi_1)_{\tilde\ig_\vr}\real(\tilde\psi+\tilde\va,\tilde\phi_2)_{\tilde\ig_\vr}}{|\tilde\psi|_{\ig_\vr}|\tilde\psi+\tilde\va|_{\ig_\vr}}d\vol_{\ig_\vr}
\\[0.3em]
&\qquad -\int_{\Om}|\tilde\psi|_{\tilde\ig_\vr}^{2^*-2}\frac{\real(\tilde\psi,\tilde\phi_1)_{\tilde\ig_\vr}\real(\tilde\psi,\tilde\phi_2)_{\tilde\ig_\vr}}{|\tilde\psi|_{\ig_\vr}^2}d\vol_{\ig_\vr}
\endaligned
\]
we deduce 
\begin{\equ}\label{AX5}
\aligned
|I_2^{(1)}|&\leq 2\int_{\Om}|\tilde\psi|_{\ig_\vr}^{2^*-2}\Big| \frac{\tilde\psi+\tilde\va  }{|\tilde\psi+\tilde\va|_{\ig_\vr}}-\frac{\tilde\psi}{|\tilde\psi|_{\ig_\vr}} \Big|_{\ig_\vr}|\tilde\phi_1|_{\ig_\vr}|\tilde\phi_2|_{\ig_\vr}d\vol_{\ig_\vr} \\[0.3em]
&\leq O(1)\int_{\Om}\frac{|\tilde\psi|_{\ig_\vr}^{2^*-2}|\tilde\va|_{\ig_\vr}|\tilde\phi_1|_{\ig_\vr}|\tilde\phi_2|_{\ig_\vr}}{|\tilde\psi+\tilde\va|_{\ig_\vr}}d\vol_{\ig_\vr} \\[0.3em]
&\leq O(1)\int_{\R^m}|\psi|_{\ig_{\R^m}}^{2^*-3}|\va|_{\ig_{\R^m}}|\phi_1|_{\ig_{\R^m}}|\phi_2|_{\ig_{\R^m}}d\vol_{\ig_{\R^m}}.
\endaligned
\end{\equ}
And thus, we obtain 
\begin{\equ}\label{AX6}
|I_2|\leq \begin{cases}
\displaystyle O(1)\int_{\R^2}(|\psi|_{\ig_{\R^2}}|\va|_{\ig_{\R^2}}+|\va|_{\ig_{\R^2}}^2)|\phi_1|_{\ig_{\R^2}}|\phi_2|_{\ig_{\R^2}}d\vol_{\ig_{\R^2}} & \text{if } m=2 \\[1.3em]
\displaystyle O(1)\int_{\R^3}|\va|_{\ig_{\R^3}}|\phi_1|_{\ig_{\R^3}}|\phi_2|_{\ig_{\R^3}}d\vol_{\ig_{\R^2}} & \text{if } m=3 
\end{cases}
\end{\equ}
Notice that $2^*=\frac{2m}{m-1}<3$ for $m\geq4$, we need to divide $\Om$ into two parts, i.e. $\Om=\Om_1\cup\Om_2$ with $\Om_1:=\big\{x\in\Om:\, |\psi|_{\ig_{\R^m}}>|\va|_{\ig_{\R^m}}\big\}$ and $\Om_2:=\big\{x\in\Om:\, |\psi|_{\ig_{\R^m}}\leq|\va|_{\ig_{\R^m}}\big\}$. Then, from the first and second lines in \eqref{AX5}, we obtain
\[
|I_2^{(1)}|\leq O(1)\int_{\Om_1}|\va|_{\ig_{\R^m}}^{2^*-2}|\phi_1|_{\ig_{\R^m}}|\phi_2|_{\ig_{\R^m}}d\vol_{\ig_{\R^m}}+ O(1)\int_{\Om_2}|\psi|_{\ig_{\R^m}}|\phi_1|_{\ig_{\R^m}}|\phi_2|_{\ig_{\R^m}}d\vol_{\ig_{\R^m}}.
\]
Hence we have
\begin{\equ}\label{AX7}
|I_2|\leq O(1)	\int_{\R^m}\big(|\va|_{\ig_{\R^m}}^{2^*-2}+|\va|\big)|\phi_1|_{\ig_{\R^m}}|\phi_2|_{\ig_{\R^m}}d\vol_{\ig_{\R^m}} \quad \text{for } m\geq4.
\end{\equ}
Now, combining \eqref{AX3}-\eqref{AX7}, we find that
\[
\big\|\nabla^2\cj_\vr(\tilde\psi+\tilde\va)-\nabla^2\cj_\vr(\tilde\psi)\big\|
\leq \begin{cases}
O(1)\big(\|\psi\|\|\va\|+\|\va\|^2 \big) & \text{if } m=2 \\[0.5em]
O(1)\big( \|\va\|^{2^*-2}+\|\va\| \big) & \text{if } m\geq3
\end{cases}
\]
which proves \eqref{inequ8}. And the proof is hereby completed.

\subsection{The global $C^2$ smoothness of the pull-back function $\tilde K\circ\pi_{p_0}$ on $S^m$}\label{C2-smooth}

Here we show that $\tilde K$ comes from a $C^2$-function on $S^m$ when $\al,\bt>0$ satisfy $\bt>4\al+1$. And this will complete the proof of Theorem \ref{main thm-H}.

\medskip

Clearly, $\tilde K$ is $C^2$ on $\R^m$, because the series defining $\tilde K$ converges uniformly on $\R^m$ up to the second derivatives. To prove the differentiability at infinity (which correspond to the north pole of $S^m$), we need to show that $y\mapsto\tilde K(y/|y|^2)$ is twice continuously differentiable near $y=0$. Without loss of generality, we assume $|y|<1$ in the following context. And, by Lemma \ref{Lem:sum2}, we see that $\tilde K(y/|y|^2)$ converges uniformly in $y$. In particular, we have $\tilde K(y/|y|^2)\to0$ as $y\to0$. 

To see the convergence of the derivatives, for the function $K$ as before, we define $\hat K(y)=K\big(\frac{y}{|y|^2}\big)$. Then, an elementary computation shows that derivatives of $\hat K$ can be estimated as
\[
|\nabla\hat K(y)|\leq C\,\Big| \nabla K\Big( \frac{y}{|y|^2} \Big) \Big||y|^{-2}
\]
and
\[
|\nabla^2\hat K(y)|\leq C \bigg( \Big| \nabla^2 K\Big( \frac{y}{|y|^2} \Big) \Big||y|^{-4}+ \Big| \nabla K\Big( \frac{y}{|y|^2} \Big) \Big||y|^{-3} \bigg).
\]

Recall \eqref{K-inequ-1}, we notice that
\begin{\equ}\label{K-D1}
	\Big| \nabla K\Big( \frac{y}{|y|^2}-z_i \Big) \Big| \leq \frac{C_0}{1+\big| \frac{y}{|y|^2}-z_i \big|^2}\leq\frac{C_0}{1+\big| \frac{1}{|y|}-i^\al R \big|^2}
\end{\equ}
where in the last inequality we used $z_i=i^\al z_0$ with $|z_0|=R$ and the triangle inequality $\big| \frac{y}{|y|^2}-z_i \big|\geq\big| \frac{1}{|y|}-i^\al R \big|$. Then, by \eqref{K-D1} and our choice $a_i=i^{-\bt}$, we have for $i\geq N$ ($N\in\N$ is arbitrarily large)
\begin{\equ}\label{DK1}
\sum_{i\geq N}|a_i|\Big| \nabla\Big( K\Big( \frac{y}{|y|^2}-z_i \Big) \Big) \Big|\leq C|y|^{-2}\sum_{i\geq N} i^{-\bt}\cdot\frac{1}{1+\big| \frac{1}{|y|}-i^\al R \big|^2}.
\end{\equ}
Let us set
\[
S(N,y)=\sum_{i\geq N} i^{-\bt}\cdot\frac{1}{1+\big| \frac{1}{|y|}-i^\al R \big|^2}.
\]
To obtain an uniform estimate of $S(N,y)$ for $|y|\leq 1$, we decompose the sum into two pieces: $(i)$ $|y|\leq \frac1{2N^\al R}$ and $(ii)$ $|y|> \frac1{2N^\al R}$. For $(i)$, we have 
\begin{\equ}\label{S1}
\aligned
S(N,y)&=\sum_{N\leq i \leq (2|y|R)^{-\frac1\al}} i^{-\bt}\cdot\frac{1}{1+\big| \frac{1}{|y|}-i^\al R \big|^2}+\sum_{i > (2|y|R)^{-\frac1\al}} i^{-\bt}\cdot\frac{1}{1+\big| \frac{1}{|y|}-i^\al R \big|^2} \\[0.3em]
&\leq C \sum_{i\geq N} \frac{|y|^2}{i^\bt}+\sum_{i > (2|y|R)^{-\frac1\al}} \frac1{i^\bt}
\leq C |y|^2 N^{1-\bt}+ C |y|^{\frac{\bt-1}{\al}}.
\endaligned
\end{\equ}
And, for $(ii)$, we have
\begin{\equ}\label{S2}
	S(N,y)\leq \sum_{i\geq N} \frac1{i^\bt}\leq C N^{1-\bt}
\end{\equ}
Therefore, by additionally requiring $\bt>2\al+1$, we can deduce for the case $(i)$
\[
|y|^{-2}S(N,y)\leq CN^{1-\bt}+C|y|^{\frac{\bt-1}{\al}-2}\leq
CN^{1-\bt}+CN^{1-\bt+2\al}\leq CN^{1-\bt+2\al},
\]
and for the case $(ii)$
\[
|y|^{-2}S(N,y)\leq C |y|^{-2} N^{1-\bt}\leq C N^{1-\bt+2\al},
\]
where $N$ is considered arbitrarily large. Thus, the estimates in \eqref{S1} and \eqref{S2} imply that
\[
\sup_{|y|\leq 1}|y|^{-2}S(N,y)=O(N^{1-\bt+2\al}) \quad \text{as } N\to+\infty.
\]
And hence, by \eqref{DK1}, the series defining $\nabla\big(\tilde K(y/|y|^2)\big)$ converges uniformly on $|y|\leq1$. This suggests that $\tilde K(y/|y|^2)$ can be extended to $y=0$ in the class of $C^1$.

The second derivatives can be estimated in a similar manner. At this stage, instead of \eqref{K-inequ-1}, we need the following improved estimates 
\begin{\equ}\label{K-inequ-1-improved}
|\nabla K(x)|\leq C_0(1+|x|^2)^{-\frac32} \quad \text{and} \quad |\nabla^2K(x)|\leq C_0(1+|x|^2)^{-2}
\end{\equ}
by the choice of $p_0$. In fact, we have
\[
\nabla K(x)=\nabla H(\pi_{p_0}^{-1}(x))[\nabla \pi_{p_0}^{-1}(x)],
\]
\[
\nabla^2K(x)=\nabla^2 H(\pi_{p_0}^{-1}(x))[\nabla \pi_{p_0}^{-1}(x),\nabla \pi_{p_0}^{-1}(x)]+\nabla H(\pi_{p_0}^{-1})[\nabla^2\pi_{p_0}^{-1}(x)]
\]
and, as in the proof of Lemma \ref{K-inequ-2},
\[
|\nabla H(\pi_{p_0}^{-1}(x))|=|\nabla H(\pi_{p_0}^{-1}(x))-\nabla H(p_0)|\leq C \max_{S^m}|\nabla^2 H|\cdot |\pi_{p_0}^{-1}(x)-p_0|\leq C (1+|x|^2)^{-\frac12}.
\]
These, together with the facts
\[
|\nabla\pi_{p_0}^{-1}(x)|\leq C(1+|x|^2)^{-1} \quad \text{and} \quad |\nabla^2\pi_{p_0}^{-1}(x)|\leq C(1+|x|^2)^{-\frac32},
\]
we obtain \eqref{K-inequ-1-improved}. 

Now, by using the estimate
\[
\aligned
&\Big| \nabla^2 K\Big( \frac{y}{|y|^2}-z_i \Big) \Big||y|^{-4}+ \Big| \nabla K\Big( \frac{y}{|y|^2} -z_i\Big) \Big||y|^{-3} \\[0.3em]
&\qquad \leq \frac{C}{|y|^4}\cdot\frac1{\big( 1+\big|\frac1{|y|}-i^\al R\big|^2 \big)^{2}}+ \frac{C}{|y|^3}\cdot\frac1{\big( 1+\big|\frac1{|y|}-i^\al R\big|^2 \big)^{\frac32}},
\endaligned
\]
we find
\begin{\equ}\label{}
	\aligned
\sum_{i\geq N} |a_i|\Big| \nabla^2\Big( K\Big(
\frac{y}{|y|^2}-z_i\Big) \Big) \Big| & \leq C|y|^{-4}\sum_{i\geq N} \frac1{i^\bt}\cdot\frac1{\big( 1+\big|\frac1{|y|}-i^\al R\big|^2 \big)^{2}} \\
&\qquad + C|y|^{-3}\sum_{i\geq N}\frac1{i^\bt}\cdot\frac1{\big( 1+\big|\frac1{|y|}-i^\al R\big|^2 \big)^{\frac32}},
\endaligned
\end{\equ}
where $N$ is arbitrarily large as before. Let us set
\[
\tilde S_1(N,y)=\sum_{i\geq N} \frac1{i^\bt}\cdot\frac1{\big( 1+\big|\frac1{|y|}-i^\al R\big|^2 \big)^{2}} \quad \text{and} \quad \tilde S_2(N,y)=\sum_{i\geq N}\frac1{i^\bt}\cdot\frac1{\big( 1+\big|\frac1{|y|}-i^\al R\big|^2 \big)^{\frac32}}
\]
then, by performing the same arguments in \eqref{S1} and \eqref{S2}, we soon get
\[
|y|^{-4}\tilde S_1(N,y) = O(N^{1-\bt+4\al})\quad \text{and} \quad |y|^{-3}\tilde S_2(N,y)=O(N^{1-\bt+4\al}) \quad \text{as } N\to+\infty
\]
provided that $\bt>4\al+1$. Thus, in this case, the series defining $\nabla^2(\tilde K(y/|y|^2))$ converges uniformly on $|y|\leq 1$. This proves that $\tilde K(y/|y|^2)$ can be extended to $y=0$ in the class of $C^2$, when $\bt>4\al+1$.

\end{document}